\title{Auslander-Reiten theory for simply connected differential graded algebras}
\author{Karsten Schmidt}
\newtheorem{lem}{Lemma}[section]
\newtheorem{prop}[lem]{Proposition}
\newtheorem{cor}[lem]{Corollary}
\newtheorem{thm}[lem]{Theorem}
\newtheorem{conj}[lem]{Conjecture}
\newtheorem*{introprop1}{Proposition~\ref{prop:ampl}}
\newtheorem*{introthm}{Theorem~\ref{thm:noc}}
\newtheorem*{introprop2}{Proposition~\ref{prop:func}}
\theoremstyle{remark}
\newtheorem{rem}[lem]{Remark}
\theoremstyle{definition}
\newtheorem{exm}[lem]{Example}
\newtheorem{defn}[lem]{Definition}
\newtheorem*{Ackn}{Acknowledgements}
\renewcommand{\mod}{\operatorname{mod}\nolimits}
\newcommand{\rad}{\operatorname{rad}\nolimits}
\newcommand{\id}{\operatorname{id}\nolimits}
\newcommand{\Mod}{\operatorname{Mod}\nolimits}
\newcommand{\End}{\operatorname{End}\nolimits}
\newcommand{\Hom}{\operatorname{Hom}\nolimits}
\newcommand{\RHom}{\operatorname{RHom}\nolimits}
\newcommand{\LHom}{\operatorname{LHom}\nolimits}
\newcommand{\Aut}{\operatorname{Aut}\nolimits}
\newcommand{\Ho}{\operatorname{H}\nolimits}
\renewcommand{\Im}{\operatorname{Im}\nolimits}
\newcommand{\Ker}{\operatorname{Ker}\nolimits}
\renewcommand{\dim}{\operatorname{dim}_k\nolimits}
\newcommand{\dimt}{\operatorname{dim}\nolimits}
\newcommand{\Ext}{\operatorname{Ext}\nolimits}
\newcommand{\Ab}{\mathrm{Ab}}
\newcommand{\DGAlg}{\mathrm{dgAlg_k}}
\newcommand{\CDGAlg}{\mathrm{cdgAlg_k}}
\newcommand{\GAlg}{\mathrm{gAlg_k}}
\newcommand{\Ob}{\mathrm{Ob}}
\newcommand{\amp}{\operatorname{amp}\nolimits}
\newcommand{\lev}{\operatorname{level}\nolimits}
\newcommand{\thick}{\operatorname{thick}\nolimits}
\newcommand{\op}{\mathrm{op}}
\newcommand{\fp}{\operatorname{Fp}\nolimits}
\def\a{\alpha}
\def\b{\beta}
\def\e{\varepsilon}
\def\g{\gamma}
\def\p{\varphi}
\def\t{\tau}
\def\m{\mu}
\def\n{\nu}
\def\la{\lambda}
\def\Si{\Sigma}
\def\C{{\mathcal C}}
\def\D{{\mathbf D}}
\def\E{{\mathcal E}}
\def\H{{\mathcal H}}
\def\M{{\mathcal M}}
\def\N{{\mathcal N}}
\def\X{{\mathcal X}}
\def\T{{\mathcal T}}
\def\U{{\mathcal U}}
\begin{document}

\pagestyle{scrheadings}
\chead{Auslander-Reiten theory for simply connected differential graded algebras}
\ohead{\pagemark}
\ihead{\headmark}
\cfoot{}

\flushbottom
%\addtolength{\texthight}{1cm}

\author{}
\date{}

\pagenumbering{roman}
\maketitle
\thispagestyle{empty}
\vspace{-2cm}
\begin{center}
{\Large\bf Dissertation}\\vorgelegt von\\[0.5cm]{\Large Karsten Schmidt}\\[0.5cm]an der Fakult\"at f\"ur Elektrotechnik, Informatik und Mathematik \\der Universit\"at Paderborn\\[0.5cm]im November 2007
\end{center}
\vspace{1cm}
\begin{abstract}
In \cite{J1} and \cite{J2} J\o rgensen introduced the Auslander-Reiten quiver of a simply connected Poincar\'e duality space. He showed that its components are of the form $\mathbb Z A_\infty$ and that the Auslander-Reiten quiver of a $d$-dimensional sphere consists of $d-1$ such components. In this thesis we show that this is the only case where finitely many components appear. More precisely, we construct families of modules, where for each family, each module lies in a different component. Depending on the cohomology dimensions of the differential graded algebras which appear, this is either a discrete family or an $n$-parameter family for all $n$.

%More precisely, depending on the cohomology dimensions of a given simply connected differential graded algebra, we construct either a discrete family or an $n$-parameter family for arbitrary $n$ consisting of modules with all of these modules lying in different components.
% 
% More precisely, depending on the cohomology dimensions of the differential graded algebras which appear, we construct either a discrete family or an $n$-parameter family for arbitrary $n$ consisting of modules with all of these modules lying in different components.
% 
% More precisely, we construct families of modules, where for each family, each module lies in a different component. Depending on the cohomology dimensions of the differential graded algebras which appear, this is either a single discrete family or an $n$-parameter family for all $n$.
% 
% More precisely, depending on the cohomology dimensions of the differential graded algebras which appear, we construct either a discrete family or an $n$-parameter family for all $n$ consisting of modules lying in different components.
% 
% More precisely, given a simply connected differential graded algebra we construct depending on the cohomology dimensions either a discrete family or an $n$-parameter family for all $n$ consisting of modules lying in different components.
% 
% either a discrete family or arbitrary parameter families of modules all of these modules lying in different components, depending on the cohomology dimensions of the differential graded algebras which appear.
\end{abstract}

\begin{otherlanguage}{german}
\begin{abstract}
J\o rgensen hat in seinen Artikeln \cite{J1} und \cite{J2} den Auslander-Reiten K\"ocher eines einfach zusammenh\"angenden topologischen Raumes mit Poincar\'e Dualit\"at eingef\"uhrt. Er zeigt, dass die Komponenten von der Form $\mathbb Z A_\infty$ sind, und dass der Auslander-Reiten K\"ocher einer Sph\"are der Dimension $d$ aus $d-1$ solcher Komponenten besteht. In dieser Arbeit zeigen wir, dass dies der einzige Fall ist, in dem nur endlich viele Komponenten auftreten. Wir konstruieren in Abh\"angigkeit von der Dimension der Kohomologie der zugeh\"origen differenziell graduierten Algebren diskrete beziehungsweise beliebige Parameter Familien von Moduln, wobei alle diese Moduln in unterschiedlichen Komponenten liegen.
% Wir konstruieren Familien von Moduln, wobei die Moduln in jeder Familie in unterschiedlichen Komponenten liegen. In Abh\"angigkeit von der Dimension der Kohomologie der zugeh\"origen differenziell graduierten Algebren sind dies diskrete beziehungsweise $n$-Parameter Familien f\"ur alle $n$.
% Wir konstruieren in Abh\"angigkeit von der Dimension der Kohomologie einer einfach zusammenh\"angenden differenziell graduierten Algebra eine diskrete beziehungsweise eine $n$-Parameter Familie f\"ur beliebiges $n$, die Moduln
\end{abstract}
\end{otherlanguage}
%\thanks{Preliminary version from \today}

\newpage

\thispagestyle{plain}
\tableofcontents

\newpage

\pagenumbering{arabic}

\section{Introduction: Motivation and Summary of the main results}

\subsection{Motivation}
Algebraic topology is concerned with the study of algebraic invariants of topological spaces. Two important concepts in this subject are homotopy groups and (co)homology groups. Let us restrict to cohomology here. Given a topological space and a field $k$ one method of calculating cohomology is to consider the singular cochain complex $C^*(X;k)$ with coefficients in $k$
 $$\cdots\overset d\to C^{n-1}(X;k)\overset d\to C^{n}(X;k)\overset d\to C^{n+1}(X;k)\overset d\to\cdots .$$
Taking cohomology in position $n$ gives the $n$-th cohomology group $\Ho^n(X;k)$ of the space $X$. Considering all cohomology groups together and keeping track of the grading gives a graded vectorspace
$$\Ho^*(X;k)=\bigoplus_{n\in\mathbb Z}\Ho^n(X;k).$$
Cohomology has in comparison with homology a richer structure: namely the cup product defines a multiplicative structure on $\Ho^*(X;k)$ and we get a graded algebra. In general the singular cochain complex includes more than just the information about the cohomology so it is natural to consider the singular cochain complex itself as algebraic object. The cup product is already defined on the singular cochain complex and makes it into a graded algebra. Additional structure is provided by the differential $d$ mapping from degree $n$ to degree $n+1$. It satisfies $d^2=0$ and behaves well with the multiplication, namely it satisfies the Leibniz rule
$$d(ab)=d(a)b+(-1)^{\deg a}ad(b).$$
A graded algebra together with a differential satisfying the Leibniz rule is called a differential graded algebra. These objects, especially those differential graded algebras appearing as the singular cochain differential graded algebra of a simply connected topological space $X$ are the objects of study in this thesis.

In order to understand algebraic objects, in representation theory one studies representations of these objects. These are structure preserving morphisms from the given object to an object with the same algebraic structure that comes from linear algebra. For example a representation of a group $G$ over a field $k$ is a group homomorphism from $G$ to the group of $k$-linear automorphisms of a $k$-vectorspace $V$.
$$G\to\Aut_k(V)$$
% A representation of a ring $R$ is a ring homomorphism from $R$ to the ring of group endomorphisms of an abelian group $V$.
% $$R\to\End_{\mathbb Z}(V)$$
Similarly, a representation of a $k$-algebra $A$ is a $k$-algebra homomorphism from $A$ to the $k$-algebra of $k$-linear endomorphisms of a $k$-vectorspace $V$. 
$$A\to\End_k(V)$$
Equivalently, one may describe a representation of an object $A$ by giving a linear object $V$ with an action of $A$ that satisfies the corresponding properties. One calls the object $V$ together with the action of the object $A$ an $A$-module.

In this thesis we will consider differential graded modules over differential graded algebras. They are representations of the differential graded algebra $A$ in the following sense. Given a complex of $k$-vector spaces
$$V\colon\quad\cdots\to V^{n-1}\to V^{n}\to V^{n+1}\to\cdots$$
one can construct the endomorphism differential graded algebra $\E nd(V)$. A representation is then a morphism of differential graded algebras
$$A\to\E nd_k(V).$$

In order to study homological properties of modules like $\Ext$-groups, homological dimensions, etc. there is a natural environment, namely the derived category. Many algebraic invariants of a ring like the centre, Hochschild cohomology, the Grothendieck group, etc. are also invariants of the derived category. The derived category of a ring $R$ is usually constructed in three steps: One first considers the category of complexes of $R$-modules; then one forms the homotopy category by factoring out the homotopy relation for morphisms of complexes; and the third step is to get the derived category by formally inverting all quasi-isomorphisms, i.e.\ isomorphisms in cohomology. The homotopy category and the derived category are in general no longer abelian categories, but they carry the structure of a triangulated category, i.e.\ there is a class of distinguished triangles
$$A\to B\to C\to\Sigma A$$
that satisfy several axioms.

The construction of the derived category generalises to our case of a differential graded algebra. In this case the category of differential graded modules over $A$  already corresponds to the category of complexes. The construction of the homotopy category and the derived category then goes through as before.

The Auslander-Reiten quiver of an additive category $\C$ gives a visualisation of the indecomposable objects and irreducible morphisms between them as a directed graph. In particular, it makes sense to consider the Auslander-Reiten quiver if every object decomposes uniquely as the direct sum of indecomposable objects, i.e.\ $\C$ is a Krull-Remak-Schmidt category. If $\C$ is a triangulated Krull-Remak-Schmidt category that has Auslander-Reiten triangles, then one gets the irreducible morphisms from these triangles and the quiver is equipped with the richer structure of a stable valued translation quiver.

The aim is now to study Auslander-Reiten theory over differential graded algebras.

\subsection{J\o rgensen's results}
Our work is based upon the results of J\o rgensen from \cite{J1} and \cite{J2}, which we now summarise. The precise statements are Theorem~\ref{thm:poincare}, Theorem~\ref{thm:ZA} and Example~\ref{exm:sphere} in Section~3. Given a simply connected topological space $X$, J\o rgensen considers inside the derived category $\D(X):=\D(C^*(X;k))$ of the singular cochain differential graded algebra the full subcategory of compact objects $\D^c(X)$, where the compactness of an object is a certain finiteness condition. For example, over a ring $R$ all finitely generated $R$-modules are compact in the category of all $R$-modules $\Mod R$. He shows that the existence of Auslander-Reiten triangles in $\D^c(X)$ is equivalent to the condition that $X$ is a Poincar\'e duality space. This is equivalent to say that $C^*(X;k)$ is a Gorenstein differential graded algebra (cf.\ \cite{FHT2}). The corresponding Auslander-Reiten quiver is the combinatorial object that has as vertices the isomorphism classes of indecomposable objects and as arrows the irreducible maps between them. It is a weak homotopy invariant of the space $X$ since the singular cochain differential graded algebra of $X$ has this property. J\o rgensen shows that the components of the Auslander-Reiten quiver that appear are always of the form $\mathbb ZA_\infty$:
$$\xymatrixrowsep{1pc}\xymatrix{
&&&& \vdots \\
&\cdot \ar[rdd] \ar@{.}[rr] & & \cdot \ar[rdd] \ar@{.}[rr] & & \cdot \ar[rdd] \ar@{.}[rr] & & \cdot \\
\\
\cdots & \ar@{.}[r] & \cdot \ar[rdd] \ar[ruu] \ar@{.}[rr] & & \cdot \ar[rdd] \ar[ruu] \ar@{.}[rr] & & \cdot \ar[rdd] \ar[ruu] \ar@{.}[r] & & \cdots \\
\\
&\cdot \ar[ruu] \ar@{.}[rr] &  & \cdot \ar[ruu] \ar@{.}[rr] & & \cdot \ar[ruu] \ar@{.}[rr] & & \cdot \\
}$$
He further computes that the Auslander-Reiten quiver of the $d$ dimensional sphere $S^d$ consists of $d-1$ such components. A natural question is then whether the number of components for a Poincar\'e duality space of Poincar\'e dimension $d$ will in general be $d-1$. Our main result will give a negative answer to this question.

\subsection{Organisation of the paper and the main results}
In the following we describe how the paper is organised and state the main results:

In the second section we consider a differential graded $k$-algebra $A$ with degreewise finite dimensional cohomology and summarise results concerning the full subcategory of compact objects in the derived category of $A$. We recall that it is a Krull-Remak-Schmidt category and the existence of Auslander-Reiten triangles is equivalent to having Serre duality.

In the third section we consider simply connected differential graded algebras. We explain the construction of a so-called free model that is inspired by rational homotopy theory and the construction of minimal semi-free resolutions that are important for the proofs. We state J\o rgensen's results from \cite{J1} and \cite{J2}. For his result about the shape of the Auslander-Reiten components we are able to remove the assumption that the characteristic of the ground field is zero. This is possible since we give the following cochain analogue of the amplitude inequalities in \cite{J3}.

\begin{introprop1}%\label{prop:ampl}
Let $A$ be a simply connected differential graded algebra of finite type, $0\neq M\in\D^c(A)$, and $0\neq X\in\D(A^\op)$ with $\dim\Ho^*X<\infty$ .
Then
\begin{enumerate}
\item $\inf\Ho^*(M\otimes_A^L X)=\inf\Ho^*M+\inf\Ho^*X$.
\item $\sup\Ho^*(M\otimes_A^L X)\geq\inf\Ho^*M+\sup\Ho^*X$.
\item $\amp(M\otimes_A^L X)\geq\amp X$.
\end{enumerate}
In particular, $\amp M\geq\amp A$.
\end{introprop1}

% \begin{cor}
% Let $A$ be a simply connected differential graded algebra of finite type with $\dim\Ho^*A\geq 2$. Then $k_A\in\D(A)$ is not compact.
% \end{cor}
% 
% For the proof of his result about the shape of the Auslander-Reiten components J\o rgensen makes the assumption that $k_A$ is not compact if $\dim\Ho^*A\geq 2$. He shows that this is true for the singular cochain differential graded algebra $C^*(X;k)$ of a topological space $X$ provided the characteristic of the ground field $k$ is zero. By our result the assumption is now generally fulfilled and in particular independent of the characteristic of the ground field $k$.

In the fourth section we are concerned with the number of Auslander-Reiten components for a simply connected Gorenstein differential graded algebra $A$ of finite type. If $\dim \Ho^*A=1$, then the Auslander-Reiten quiver consists of countably many components each of them containing just a single vertex. If $\dim \Ho^*A=2$, then we are in the case of the spheres where J\o rgensen has computed that the Auslander-Reiten quiver consists of $\sup\{i\mid\Ho^iA\neq 0\}-1$ components of the form $\mathbb ZA_\infty$. Our main result shows that these cases are quite special:%We formulate and prove our main result. 

\begin{introthm}%\label{thm:noc}
Let $A$ be a simply connected Gorenstein differential graded algebra of finite type. Then the following hold.
\begin{enumerate}
\item The Auslander-Reiten quiver of $A$ has finitely many components if and only if $\dim \Ho^*A=2$. In this case the number of components equals $\sup\{i\mid\Ho^iA\neq 0\}-1$.% \Ho^*A$.
\item If $\dim \Ho^eA\geq 2$ for some $e$, then there is an $n$-parameter family of Auslander-Reiten components for each $n\in\mathbb N$. In fact there are objects, each lying in a different component, that can be parametrised by $\mathbb P^1(k)^n$.
\end{enumerate}
\end{introthm}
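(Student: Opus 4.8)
The plan is to reduce the counting of Auslander--Reiten components to the counting of indecomposables of \emph{minimal} cohomological amplitude. By Proposition~\ref{prop:ampl} every $0\neq M\in\D^c(A)$ satisfies $\amp M\geq\amp A$, and I would show that the objects realising equality, $\amp M=\amp A$, are precisely those lying on the boundary row of their $\mathbb ZA_\infty$ component (Theorem~\ref{thm:ZA}). The key observation is that over a Gorenstein $A$ the Serre functor, hence the translate $\tau=\Sigma^{-1}S$, is a fixed power of the suspension $\Sigma$; therefore an interior vertex sits in an Auslander--Reiten triangle whose two outer terms are suspensions of one another, and the long exact cohomology sequence then forces its amplitude strictly above that of the boundary, which is already $\geq\amp A$. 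Since the boundary row of each component is a single $\tau$-orbit and every $\tau$-orbit lies inside a single $\Sigma$-orbit, it will suffice to control minimal-amplitude indecomposables up to suspension: finitely many $\Sigma$-orbits give finitely many components, infinitely many give infinitely many.

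For part~(1), the ``if'' direction uses that a simply connected Gorenstein algebra of finite type with $\dim\Ho^*A=2$ has $\Ho^*A\cong k[x]/(x^2)$ with $|x|=d:=\sup\{i\mid\Ho^iA\neq0\}$, so $\D^c(A)$ agrees with that of the $d$-sphere and J\o rgensen's Example~\ref{exm:sphere} yields exactly $d-1$ components: the unique minimal-amplitude orbit $\{\Sigma^iA\}$ splits into $d-1$ orbits under $\tau=\Sigma^{d-1}$. For the converse I would argue by cases on $\dim\Ho^*A\neq2$. If $\dim\Ho^*A=1$ then $A\simeq k$, so $\D^c(A)=\D^c(k)$, whose indecomposables are the $\Sigma^ik$; all Auslander--Reiten triangles split, giving countably many single-vertex components. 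If $\dim\Ho^*A\geq3$ I would exhibit infinitely many minimal-amplitude indecomposables in pairwise distinct $\Sigma$-orbits: when some $\dim\Ho^eA\geq2$ this comes from part~(2), and otherwise — when every graded piece is at most one-dimensional but at least two positive degrees $d_1\neq d_2$ occur — from a discrete family of ``string'' modules whose minimal semi-free models realise distinct, shift-invariant patterns of linking degrees.

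For part~(2), I would fix linearly independent classes $\alpha,\beta\in\Ho^eA$ and, for $\lambda=([\mu_1:\nu_1],\dots,[\mu_n:\nu_n])\in\mathbb P^1(k)^n$, build the semi-free module $M_\lambda$ on generators $g_0,\dots,g_n$ with $\deg g_i=\deg g_0+i(e-1)$ and differential $dg_i=(\mu_i\alpha+\nu_i\beta)g_{i-1}$. The steps are: (i) verify $M_\lambda$ is indecomposable; (ii) compute $\Ho^*M_\lambda$ through the iterated cone to get $\amp M_\lambda=\amp A$, where $\geq$ is Proposition~\ref{prop:ampl} and $\leq$ is a direct check that the cohomology stays in a band of width $\amp A$ for every $n$, so that each $M_\lambda$ is a boundary object; and (iii) read off the ratios $[\mu_i:\nu_i]$ as invariants of $M_\lambda$ preserved under isomorphism and under $\Sigma$. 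Then $\Sigma^jM_\lambda\cong M_{\lambda'}$ forces $j=0$ and $\lambda=\lambda'$, so the $M_\lambda$ occupy $\mathbb P^1(k)^n$ many distinct $\Sigma$-orbits and hence distinct components.

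The main obstacle throughout is the passage from ``pairwise non-isomorphic up to suspension'' to ``pairwise in distinct components,'' and this is exactly what the minimal-amplitude reduction buys: it pins each constructed object to the boundary of its component, where membership is governed by a single $\tau$-orbit and is therefore detected by suspension-invariant data. Consequently the delicate work is the amplitude computation $\amp M_\lambda=\amp A$ and its analogue for the discrete family — keeping the objects on the rim independently of the number of generators — together with the verification that $\tau$ is a power of $\Sigma$, so that $\tau$-orbits cannot merge what the $\Sigma$-invariants separate.
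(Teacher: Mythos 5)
Your proposal contains two genuine gaps, and the first is fatal to the construction in part~(2) as written. The module $M_\lambda$ with $dg_i=(\mu_i\alpha+\nu_i\beta)g_{i-1}$ for \emph{consecutive} $i$ is in general not a differential graded module: the Leibniz rule gives $d^2g_i=\pm(\mu_i\alpha+\nu_i\beta)(\mu_{i-1}\alpha+\nu_{i-1}\beta)g_{i-2}$, and the product of two degree-$e$ classes need not vanish. Already for the paper's running example (Example~\ref{exm:S2S2}), the finite dimensional model $A=k[x,y]/(x^2,y^2)$ of $C^*(S^2\times S^2;k)$ with $e=2$, $d=4$, one gets $(\mu_i x+\nu_i y)(\mu_{i-1}x+\nu_{i-1}y)=(\mu_i\nu_{i-1}+\nu_i\mu_{i-1})xy\neq 0$ for generic parameters; and whenever $2e=d$, Poincar\'e duality makes the pairing $\Ho^eA\times\Ho^eA\to\Ho^dA$ nondegenerate, so no choice of $\alpha,\beta$ avoids the obstruction. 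This is exactly why the paper forbids two consecutive construction steps of the second kind and interleaves steps of the first kind (attaching along the top class $\omega\in A^d$, whose products with positive-degree elements vanish for degree reasons), so that the $\mathbb P^1(k)$-parameters enter only at every other step. Your degree pattern $\deg g_i=\deg g_0+i(e-1)$ is therefore not realisable in general, and the paper's alternating pattern must be used instead.

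The second gap is that the amplitude claim in your step~(ii) is false, which breaks the ``pin to the rim'' reduction. Every cone step strictly increases amplitude: by Remark~\ref{rem:Ho1}(ii) the top cohomology of the newly attached shifted copy of $A$ survives, so any such iterated cone $C_n$ has $\inf\Ho^*C_n=0$ and $\sup\Ho^*C_n=e_n+d-1$, i.e.\ $\amp C_n=\amp A+\sum_{i=1}^n(e_i^A-1)$, growing with $n$; already one step of the second kind over $S^2\times S^2$ gives amplitude $5>4=\amp A$. In fact your biconditional ``rim $\Leftrightarrow$ amplitude $=\amp A$'' fails: combining Proposition~\ref{prop:ampl} with the row-wise amplitude growth shows $C_{(\la)}$ \emph{must} lie on the rim of its component, yet $\amp C_{(\la)}=\amp A+e-1$. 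What is true, and provable by your long-exact-sequence argument, is the weaker statement that in a $\mathbb Z A_\infty$ component with $\tau=\Sigma^{d-1}$ the amplitude is $\tau$-invariant and strictly increases from row to row, so that two indecomposables of equal amplitude in one component are $\tau$-shifts of each other; this corrected form could indeed replace the paper's additive function $f=\dim\Ho^*\RHom_A(-,k)$ for the parameter families, whose members share one amplitude. (For the discrete family in part~(1) it is more delicate, since those objects have amplitudes differing by multiples of $d-e$, while they all share the value $f=n+1$; this is one reason the paper works with $f$.) Finally, even with these repairs you only assert indecomposability in step~(i); that is the technical heart of the paper's argument (Proposition~\ref{prop:local}, exhibiting $\End_{\D(A)}(C_n)$ as an iterated trivial extension of $k$, hence local), and its proof depends precisely on the alternating, spread-out degree pattern that your construction discards.
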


To prove this theorem we construct objects which lie in different components. For those objects we can inductively describe the endomorphism ring (see Proposition~\ref{prop:local} for the precise statement).

In the fifth section we give, based on the functorial approach to Auslander-Reiten sequences that Auslander describes in \cite{A2}, a different interpretation of the number of components in the Auslander-Reiten quiver of a triangulated Krull-Remak-Schmidt category $\T$ having Auslander-Reiten triangles. This is in terms of certain equivalence classes of finitely presented simple functors in the functor category $\widehat\T=\fp(\T^\op,\Ab)$. We prove

\sloppy
\begin{introprop2}
Let $\T$ be a triangulated Krull-Remak-Schmidt category having Auslander-Reiten triangles. Then there is a natural one-to-one correspondence between the Auslander-Reiten components of $\T$ and the equivalence classes of simple objects in $\widehat\T$.
\end{introprop2}
\fussy

In the sixth section we discuss some open questions and conjectures.% In particular, we ask for a complete classification of simply connected differential graded algebras in terms of the number of Auslander-Reiten components, and what could be done in the more general setup of a not necessarily simply connected differential graded algebra.%In particular, we ask whether the $\level$-function could be an appropriate replacement for the additive function that J\o rgensen uses in the simply connected case.

The remaining sections are appendices about the derived category of a differential graded algebra, Auslander-Reiten theory and a little topological background.

\subsection{Notation and terminology}
 Throughout this thesis the differential graded objects will usually be denoted in the cohomological notation, i.e. the differential increases degree by $1$. Furthermore $k$ will denote a field and $A$ a differential graded $k$-algebra. The opposite differential graded algebra will be denoted by $A^\op$. Further $\mathbf C(A)$ will be the category of differential graded $A$-modules, $\mathbf H(A)$ the corresponding homotopy category, and $\D(A)$ the derived category (for further explanations look at \cite{Ke} or the Appendix~A). We write superscript $f$ for the corresponding subcategories consisting of all objects $X$ with the $n$-th cohomology $\Ho^nX$ finitely generated over $k$ for all $n\in\mathbb{Z}$, superscript $b$ for the corresponding subcategories consisting of all objects $X$ whose underlying complexes are bounded in cohomology, and superscript $c$ for the corresponding subcategories consisting of all compact objects. We will denote the shift of (differential) graded objects by $\Sigma$. If we consider just the underlying graded structure of a differential graded object $X$, then we will sometimes write $X^\natural$. Furthermore, let $D:=\Hom_k(-,k)$ be the $k$-duality functor. Given a complex $X$ the cohomology $\Ho^*X$ is the $\mathbb Z$-graded object that consists in degree $n$ of the $n$-th cohomology $\Ho^nX$. We sometimes denote the graded object $\Ho^*\RHom_A(X,Y)$ by $\Ext^*_A(X,Y)$. We also define 
$$\sup X:=\sup\{i\in\mathbb Z\mid X^i\neq 0\}~\text{ and }~\inf X:=\inf\{i\in\mathbb Z\mid X^i\neq 0\}.$$
For a given category $\C$ and two objects $X,Y\in\Ob(\C)$ the set of morphisms $\Hom_\C(X,Y)$ will sometimes be denoted by $\C(X,Y)$ or just $(X,Y)$ for short. Given a triangulated category $\T$ and a set of objects $S$ in $\T$, we denote by $\langle S\rangle_{\rm thick}$ the {\em thick} subcategory generated by $S$, i.e.\ the smallest triangulated subcategory of $\T$ that contains $S$ and is closed under direct summands. Similarly, we denote by $\langle S\rangle_{\rm loc}$ the {\em localising} subcategory generated by $S$, i.e.\ the smallest triangulated subcategory of $\T$ that contains $S$ and is closed under taking coproducts. For us a triangulated subcategory will be strict (i.e.\ closed under isomorphisms) by definition. We say that a $k$-linear category is {\em $\Hom$-finite} if all homomorphism spaces are finite dimensional.

Further notation is explained in the appendices.

\begin{Ackn} I would like to thank my advisor Henning Krause for his support of my dissertation. I also thank the representation theory group in Paderborn and their various guests for their mathematical and non-mathematical support. In particular, I am grateful to Dave Benson and Srikanth Iyengar for some helpful discussions. I also thank Kristian Br\"uning and Andrew Hubery for some useful comments on a preliminary version of this thesis.
%, in particular for suggesting me J\o rgensen's work
\end{Ackn}
\newpage

\section{Preliminaries}

% \begin{thm}\label{thm:K1}\cite[Theorem~4.4]{K1}
% Let $k$ be a commutative, noetherian, complete local ring and $\T$ a compactly generated $k$-linear triangulated category such that $\Hom(X,Y)$ is a finitely generated $k$-module for every pair of compact objects $X,Y$. Then the following are equivalent for $\T$.
% \begin{enumerate}
% \item The category $\T^c$ of compact objects in $\T$ has Auslander-Reiten triangles.
% \item An object in $\T$ is compact if and only if it is isomorphic to some object in the image of $T$.
% \item The fully faithful, exact functor $T\colon\T^c\to\T$ defined by \cite[(4.1)]{K1} induces an equivalence $\T^c\to\T^c$.
% \end{enumerate}
% \end{thm}

%exactness of $T$ like adjoint functors of exact functors are exact (see \cite{N}).

% \begin{lem}\label{lem:thick}
% Let $F\colon\E\to\F$ be an exact functor between triangulated categories and $\X$ a class of objects in $\E$. Then $F(\langle \X\rangle_{\rm thick})\subseteq\langle F(\X)\rangle_{\rm thick}$
% \end{lem}
% {$k$ commutative noetherian complete local ring}
Let $k$ be a field and $A$ a differential graded $k$-algebra. Before we get to the simply connected differential graded algebras in Section~3, we consider differential graded algebras with degreewise finite dimensional cohomology. We recall some facts related to the existence of Auslander-Reiten triangles in $\D^c(A)$, i.e.\ the full subcategory of the derived category $\D(A)$ consisting of the compact objects. For the convenience of the reader some proofs are included.
  
%Let $E$ be an injective cogenerator for $\Mod k$ (e.g. $E=E(R/\mathfrak{m})$) and  $D:=\Hom_k(-,E)$ the $k$-duality functor.
%\subsection{The Krull-Remak-Schmidt property}
\begin{lem}\label{lem:duality}
Let $A$ be a differential graded $k$-algebra, $X,Y$ be objects in $\mathbf D(A)$, and in addition let $X$ be compact. Then there is an isomorphism that is natural in $X$ and $Y$
$$D\Hom_{\D(A)}(X,Y)\cong\Hom_{\D(A)}(Y,X\otimes_A^LDA).$$
\end{lem}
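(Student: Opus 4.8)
The plan is to prove the stronger statement that there is a natural isomorphism
$$D\RHom_A(X,Y)\cong\RHom_A(Y,X\otimes_A^LDA)$$
in $\D(k)$, and then to apply $\Ho^0$. Since $D=\Hom_k(-,k)$ is exact we have $\Ho^0(DC)=D\Ho^0(C)$, so the left-hand side yields $D\Hom_{\D(A)}(X,Y)$ while the right-hand side yields $\Hom_{\D(A)}(Y,X\otimes_A^LDA)$, using $\Hom_{\D(A)}(-,-)=\Ho^0\RHom_A(-,-)$. Thus it suffices to establish the $\RHom$-level isomorphism.

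Two adjunctions drive the argument. First, the combined tensor--hom--$k$-duality adjunction gives, for a right module $M$ and a left module $N$, a natural isomorphism $D(M\otimes_A^LN)\cong\RHom_A(N,DM)$, where $DM=\Hom_k(M,k)$ carries the left structure dual to the right structure of $M$. The same adjunction applied to $A\otimes_A^LM\cong M$ also yields $DM\cong\RHom_{A^\op}(M,DA)$ for every $M\in\D(A^\op)$. Second, I will use the characterisation of compactness: $X$ is compact precisely when the natural evaluation map $\RHom_A(X,A)\otimes_A^LY\to\RHom_A(X,Y)$ is an isomorphism for all $Y$, together with the attendant biduality $X\cong\RHom_{A^\op}(\RHom_A(X,A),A)$ for compact $X$.

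Writing $X^\ast:=\RHom_A(X,A)\in\D(A^\op)$, the proof is then a chain of these natural isomorphisms. Compactness gives $\RHom_A(X,Y)\cong X^\ast\otimes_A^LY$; applying $D$ and the first adjunction turns this into $D\RHom_A(X,Y)\cong\RHom_A(Y,DX^\ast)$. It remains to identify $DX^\ast$. The second form of the adjunction gives $DX^\ast\cong\RHom_{A^\op}(X^\ast,DA)$, and since $X^\ast$ is again compact in $\D(A^\op)$, the perfectness isomorphism on the $A^\op$-side rewrites this as $\RHom_{A^\op}(X^\ast,A)\otimes_A^LDA\cong X\otimes_A^LDA$, where the last step is biduality. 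Splicing these isomorphisms produces the desired natural isomorphism.

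The main obstacle is bookkeeping rather than conceptual: one must keep careful track of the left and right $A$-module structures on $A$, $DA$, $X^\ast$ and the various $\RHom$'s so that each adjunction is applied on the correct side, and one must check that every isomorphism involved---especially the two invocations of the perfectness criterion and the biduality $X\cong X^{\ast\ast}$---is natural in $X$ and $Y$, so that the composite is natural as claimed. As a consistency check, for $A=k$ all terms collapse to the finite-dimensional biduality $DDX\cong X$, and for an ordinary algebra with $X=A$ the statement recovers the classical isomorphism $D\Ho^0(Y)\cong\Hom_{\D(A)}(Y,DA)$ built from the Nakayama functor.
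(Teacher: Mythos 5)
Your proof is correct and is essentially the same argument the paper relies on: the paper's proof is a one-line citation to the ordinary-algebra calculation in Krause's Example~1 of \cite{K1}, which runs on exactly your three ingredients (the evaluation isomorphism $Y\otimes_A^L\RHom_A(X,A)\cong\RHom_A(X,Y)$ for compact $X$, the tensor-hom-$k$-duality adjunction, and the identification $D\RHom_A(X,A)\cong X\otimes_A^LDA$). The only blemishes are the left/right bookkeeping slips you yourself flag --- e.g.\ $X^\ast\otimes_A^LY$ should be $Y\otimes_A^LX^\ast$ since $X^\ast$ is a left module, and Homs between left modules should be written over $A^\op$ --- which are routine to repair and do not affect the structure of the argument.
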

\begin{proof}
The same calculation as in the ordinary algebra case (cf.\ \cite[Example~1]{K1}) gives the desired result.
\end{proof}
% \begin{proof}
% For $X=A$ we clearly have
% \begin{multline*}
% \Hom_{\D(A)}(Y,A\otimes_A^L DA)\cong\Hom_{\D(A)}(Y,DA)\cong\Hom_{\D(A^\op)}(A,DY)\cong DY\\ \cong D\Hom_{\D(A)}(A,Y).
% \end{multline*}
% % \begin{equation*}
% % \begin{split}
% % \Hom_{\D(A)}(Y,A\otimes_A^L DA)\cong\Hom_{\D(A)}(Y,DA)\cong\Hom_{\D(A^\op)}(A,DY)\cong DY\\
% % \cong D\Hom_{\D(A)}(A,Y).
% % \end{split}
% % \end{equation*}
% By d\'evissage we get the assertion for all $X\in\langle A\rangle_{\rm thick}=\D^c(A)$ as desired.
% \end{proof}

\begin{lem}\label{lem:dual} Let $A$ be a differential graded algebra with degreewise finite dimensional cohomology. Then
\begin{enumerate}
\item $\Hom_{\D(A)}(X,Y)$ is a finite dimensional $k$-vectorspace for every pair of compact objects $X,Y$.
\item $\End_{\D(A)}(X)$ is local for every indecomposable compact object $X$.
\end{enumerate}
In particular, $\D^c(A)$ is a $\Hom$-finite $k$-linear Krull-Remak-Schmidt category.
\begin{proof}
(1) Consider the full subcategory $\M$ in $\mathbf D(A)$ consisting of all objects $X$ such that $\Hom_{\D(A)}(X,\Si^nA)$ is finite dimensional over $k$ for all $n\in\mathbb{Z}$. The category $\M$ is a thick triangulated subcategory of $\D(A)$ containing $A_A$. Here one uses the fact that $\Hom_{\D(A)}(A,\Si^nA)\cong\Ho^nA$ is finitely generated. Hence $\M$ contains $\langle A\rangle_{\rm thick}=\mathbf D^c(A)$. Now consider the full subcategory $\N$ in $\mathbf D(A)$ consisting of all objects $Y$  such that $\Hom_{\D(A)}(X,Y)$ is finite dimensional over $k$ for all $X\in\M$. The category $\N$ is again a thick triangulated subcategory of $\D(A)$ containing $A_A$, hence $\N$ contains  $\langle A\rangle_{\rm thick}=\D^c(A)$.\\
(2) Since $\mathbf D(A)$ is a triangulated category with coproducts every idempotent morphism in $\mathbf D(A)$ splits (see \cite[Proposition~1.6.8]{N}). Since $\mathbf D^c(A)=\langle A_A\rangle_{\rm thick}$ is closed under direct summands, also every idempotent in $\mathbf D^c(A)$ splits. Therefore an object $X$ is indecomposable if and only if $0$ and $1$ are the only idempotents in the endomorphism ring $R:=\End_{\D(A)}(X)$ of $X$. This is equivalent to $R$ being indecomposable as module over itself, and this is the same as $R\cong\End_R(R)$ being local, since for a compact object $X$ its endomorphism ring $R$ is finite dimensional by (1).\\
Given a compact object, it is the direct sum of finitely many indecomposables by (1) and since the indecomposables have local endomorphism rings by (2) this decomposition is essentially unique. Hence $\D^c(A)$ is a Krull-Remak-Schmidt category.
%Because we have (1), this is equivalent to $\Hom_{\D(A)}(X,X)$ being local for every indecomposable compact object $X$.
\end{proof}
\end{lem}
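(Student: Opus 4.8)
The plan is to establish the three assertions in order, the central tool being the structural identity $\D^c(A)=\langle A\rangle_{\rm thick}$: the compact objects are exactly the smallest thick subcategory containing the free module $A$. This reduces every statement about all compact objects to a statement about $A$ together with closure under triangles, shifts, and direct summands.

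For part (1) I would run a \emph{dévissage} in each variable separately, the only input from the hypothesis being the finite dimensionality of $\Hom_{\D(A)}(A,\Sigma^n A)\cong\Ho^n A$ for every $n$. First fix the second variable and let $\M$ be the full subcategory of all $X$ for which $\Hom_{\D(A)}(X,\Sigma^n A)$ is finite dimensional for all $n$. Applying $\Hom_{\D(A)}(-,\Sigma^n A)$ to a distinguished triangle gives a long exact sequence of $k$-vector spaces, and since an extension of finite dimensional spaces, a shift, and a direct summand of a finite dimensional space are again finite dimensional, $\M$ is a thick subcategory. It contains $A$, hence $\M\supseteq\langle A\rangle_{\rm thick}=\D^c(A)$. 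I would then repeat the argument in the first variable: let $\N$ be the full subcategory of all $Y$ with $\Hom_{\D(A)}(X,Y)$ finite dimensional for every $X\in\M$. The same long-exact-sequence reasoning makes $\N$ thick, and by the first step it contains all shifts of $A$, so $\N\supseteq\D^c(A)$. For compact $X,Y$ we then have $X\in\M$ and $Y\in\N$, which gives the claim.

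For part (2) I would first invoke the fact that a triangulated category with coproducts is idempotent complete (Bökstedt--Neeman), so every idempotent endomorphism in $\D(A)$ splits; since $\D^c(A)$ is closed under direct summands, idempotents split inside $\D^c(A)$ as well. Hence an object $X$ is indecomposable precisely when its endomorphism ring $R=\End_{\D(A)}(X)$ has no idempotents other than $0$ and $1$, that is, when $R$ is indecomposable as a module over itself. By part (1), $R$ is a finite dimensional, hence Artinian, $k$-algebra, and for such a ring having only the trivial idempotents is equivalent to being local: idempotents lift modulo the radical, and a semisimple algebra with no nontrivial idempotents is a division ring, so $R/\rad R$ is a division ring.

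Finally, the ``in particular'' statement combines the two parts. A compact object splits off indecomposable summands, and this process terminates because the endomorphism rings involved are finite dimensional by part (1); thus every compact object is a finite direct sum of indecomposables, whose endomorphism rings are local by part (2), and the Krull--Remak--Schmidt--Azumaya theorem forces the decomposition to be unique up to isomorphism and permutation. Together with the $\Hom$-finiteness from part (1), this exhibits $\D^c(A)$ as a $\Hom$-finite $k$-linear Krull--Remak--Schmidt category. I do not expect a genuinely hard step here; the main care is in verifying cleanly that finite dimensionality is inherited through triangles in \emph{both} variables and in citing the correct idempotent-completeness result — everything else is standard triangulated-category and Artinian-ring bookkeeping.
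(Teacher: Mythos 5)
Your proposal is correct and follows essentially the same route as the paper: the same two-step dévissage with the subcategories $\M$ and $\N$ for $\Hom$-finiteness, the same appeal to idempotent splitting in a triangulated category with coproducts (Neeman/Bökstedt--Neeman) to reduce indecomposability to the absence of nontrivial idempotents in the finite dimensional endomorphism ring, and the same Krull--Remak--Schmidt--Azumaya conclusion. The only difference is that you spell out the Artinian-ring step (idempotent lifting and $R/\rad R$ being a division ring) which the paper leaves implicit.
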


%\subsection{The existence of Auslander-Reiten triangles}
So for a differential graded algebra $A$ with degreewise finite dimensional cohomology, the compact objects in the derived category which we denote by $\D^c(A)$ form a triangulated $\Hom$-finite $k$-linear Krull-Remak-Schmidt category. The following proposition gives some characterisations for the existence of Auslander-Reiten triangles in this category.

% \begin{lem}\label{lem:image}
% $\Im T=\langle DA_A\rangle_{\rm thick}$
% \begin{proof}
% Since $\Im T$ is a thick triangulated subcategory of $\D A$ that contains $DA_A$, we have $\Im T\supseteq\langle DA_A\rangle_{\rm thick}$. The inverse inclusion holds by Lemma~\ref{lem:thick}.
% \end{proof}
% \end{lem}

\begin{prop}\label{prop:gor}
Let $A$ be a differential graded algebra with degreewise finite dimensional cohomology. Then the following conditions are equivalent:
\begin{enumerate}
\item The category $\D^c(A)$ has Auslander-Reiten triangles.
\item The category $\D^c(A)$ has Serre duality, i.e.\ there exists an exact autoequivalence $S\colon\D^c(A)\to\D^c(A)$ and natural isomorphisms $$D\Hom_{\D(A)}(X,Y)\cong\Hom_{\D(A)}(Y,SX)\text{ for all }X,Y\in\D^c(A).$$ % with Serre functor $-\otimes^L_{DA}DA$.
\item $_A (DA)\in\D^c(A^\op)$ and $(DA)_A\in\D^c(A)$.
\item The $k$-duality $D$ restricts to a duality on the compact objects
%$$\mathbf D^c(A)\underset D{\overset D{\overset\longrightarrow\longleftarrow}}\mathbf D^c(A^\op).$$
$$\xymatrix{
D^c(A) \ar@<1ex>[r]^D & D^c(A^\op) \ar@<1ex>[l]^D
}$$

% \item The duality on compact objects $\RHom(-,A)$ extends to a duality
% $$\xymatrix{
% D(A) \ar@<1ex>[rrr]^{\RHom_A(-,A)} &&& D(A^{\op}) \ar@<1ex>[lll]^{\RHom_{A^\op}(-,A)}
% },$$
% \item The duality on compact objects $\RHom(-,A)$ extends to a duality
% %$$\mathbf D^f(A)\underset {\RHom_{A^\op}(-,A)}{\overset {\RHom_A(-,A)}{\overset\longrightarrow\longleftarrow}}\mathbf D^f(A^\op),$$
% $$\xymatrix{
% D^f(A) \ar@<1ex>[rrr]^{\RHom_A(-,A)} &&& D^f(A^{\op}) \ar@<1ex>[lll]^{\RHom_{A^\op}(-,A)}
% },$$
% this is the definition of $A$ being a Gorenstein differential graded algebra given by Frankild , J\o rgensen in .
% %augmented DGA [DGI] \item $\Ext^*_A(k,A):=\Ho^*\RHom_A(k,A)\cong\Sigma^{-d}k$ for some $d\in\mathbf N$.

\end{enumerate}
Moreover, in this situation the Serre functor is $S=-\otimes^L_{A}DA$ and the Auslander-Reiten translate is $(-\otimes^L_{A}DA)\circ\Sigma^{-1}$.
\begin{proof}
The equivalence of (1) and (2) is shown in \cite[Proposition~I.2.3]{RV} as well as in \cite[Theorem~4.4]{K1}. It follows from Lemma~\ref{lem:duality} that the Serre functor is $-\otimes^L_{A}DA$. For the equivalence of (2),(3) and (4) one can use d\'evissage arguments.

\end{proof}
\end{prop}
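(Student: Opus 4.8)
The plan is to prove (1) $\Leftrightarrow$ (2) by a general structural theorem and then route the equivalences of (2), (3), (4) through the single natural isomorphism of Lemma~\ref{lem:duality}, the guiding observation being that all three conditions amount to compactness of $DA$ on both sides. First I would dispose of (1) $\Leftrightarrow$ (2) by citing the general result that, in a $\Hom$-finite Krull-Remak-Schmidt triangulated category, the existence of Auslander-Reiten triangles is equivalent to the existence of a Serre functor \cite[Proposition~I.2.3]{RV}, \cite[Theorem~4.4]{K1}; the hypotheses hold by Lemma~\ref{lem:dual}. Lemma~\ref{lem:duality} supplies the natural isomorphism $D\Hom_{\D(A)}(X,Y)\cong\Hom_{\D(A)}(Y,X\otimes_A^L DA)$, so that once $-\otimes_A^L DA$ is known to restrict to an autoequivalence of $\D^c(A)$ it is, by uniqueness of Serre functors, \emph{the} Serre functor $S$, whence the translate $S\circ\Sigma^{-1}=(-\otimes_A^L DA)\circ\Sigma^{-1}$.

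Next I would treat (2) $\Rightarrow$ (3): a Serre functor is isomorphic to $S=-\otimes_A^L DA$ and restricts to an autoequivalence of $\D^c(A)$, so evaluating on the compact generator gives $SA\cong(DA)_A\in\D^c(A)$, and the symmetric argument over $A^\op$ (whose Serre functor is $-\otimes_{A^\op}^L DA$) yields ${}_A(DA)\in\D^c(A^\op)$. For (3) $\Leftrightarrow$ (4) I would run a dévissage. The $k$-duality $D$ is a contravariant exact functor interchanging $\D(A)$ and $\D(A^\op)$ with $DD\cong\Id$ on objects of degreewise finite dimensional cohomology; the full subcategory of $\D^c(A)$ of objects $X$ with $DX\in\D^c(A^\op)$ is thick and, by (3), contains $A$, so since $\D^c(A)=\langle A\rangle_{\rm thick}$ it is everything, and symmetrically on the other side, giving the asserted duality. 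Conversely (4) applied to $A$ returns (3).

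Finally (3) $\Rightarrow$ (2), which is the crux: one must upgrade $-\otimes_A^L DA$ from a functor satisfying the Serre formula to a genuine autoequivalence. I would rewrite it using the $k$-duality adjunction $D(X\otimes_A^L N)\cong\RHom_A(X,DN)$ together with $D(DA)\cong A$, obtaining a natural isomorphism $X\otimes_A^L DA\cong D\RHom_A(X,A)$. Now $\RHom_A(-,A)\colon\D^c(A)\to\D^c(A^\op)$ is a duality for any $A$, while by (3) (equivalently (4)) the functor $D\colon\D^c(A^\op)\to\D^c(A)$ is a duality; hence $S=-\otimes_A^L DA$ is a composite of two dualities and therefore an autoequivalence of $\D^c(A)$. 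Together with Lemma~\ref{lem:duality} this produces the Serre duality isomorphism, establishing (2).

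The step I expect to be the main obstacle is this last one: Lemma~\ref{lem:duality} and the formula it provides by themselves say nothing about essential surjectivity or invertibility of $-\otimes_A^L DA$. The device that resolves it is the identification $X\otimes_A^L DA\cong D\RHom_A(X,A)$, which exhibits $S$ as the composite of the always-available duality $\RHom_A(-,A)$ with the \emph{conditional} duality $D$; in this way the content of (3)/(4) feeds precisely into the invertibility of the Serre functor, closing the cycle.
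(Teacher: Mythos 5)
Your overall architecture matches the paper's own (very terse) proof: the same citations for (1)~$\Leftrightarrow$~(2), Lemma~\ref{lem:duality} to identify the Serre functor as $-\otimes_A^LDA$, and d\'evissage for the circle (2), (3), (4). Your (3)~$\Leftrightarrow$~(4) step and your (3)~$\Rightarrow$~(2) step are correct and are a genuine filling-in of what the paper leaves as ``one can use d\'evissage arguments''; in particular the identification $X\otimes_A^LDA\cong D\RHom_A(X,A)$, exhibiting the candidate Serre functor as the composite of the unconditional duality $\RHom_A(-,A)\colon\D^c(A)\to\D^c(A^\op)$ with the conditional duality $D$, is exactly the right mechanism for extracting invertibility from (3).

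However, your (2)~$\Rightarrow$~(3) has a genuine gap in its second half. Condition (2) concerns $\D^c(A)$ only, whereas (3) is two-sided. The phrase ``the symmetric argument over $A^\op$ (whose Serre functor is $-\otimes_{A^\op}^LDA$)'' presupposes that $\D^c(A^\op)$ has Serre duality, which is not part of the hypothesis and does not follow by symmetry; as written the step is circular, since by your own evaluation-at-the-generator argument the existence of a Serre functor on $\D^c(A^\op)$ is essentially equivalent to the compactness of ${}_A(DA)$ that you are trying to prove. This one-sided-to-two-sided passage is not a formality: the analogous passage for one-sided finiteness of the injective dimension of a finite dimensional algebra (Gorenstein symmetry) is a well-known open problem, so an actual argument is required precisely here. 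The repair is available from tools you already deploy: $\RHom_A(-,A)\colon\D^c(A)^{\op}\to\D^c(A^\op)$ is an equivalence for \emph{any} $A$, and if a $\Hom$-finite Krull-Remak-Schmidt triangulated category $\T$ has Serre functor $S$, then $\T^{\op}$ has Serre functor $S^{-1}$ (dualize the natural isomorphism $D\Hom_\T(S^{-1}X,Y)\cong\Hom_\T(Y,X)$); transporting $S^{-1}$ through this equivalence gives a Serre functor on $\D^c(A^\op)$, and then Lemma~\ref{lem:duality} applied to $A^\op$, together with your evaluation argument, yields ${}_A(DA)\in\D^c(A^\op)$. With that insertion your cycle of implications closes and the proof is complete.
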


\begin{rem}
Following \cite{K0} or \cite{RV} one can construct the Auslander-Reiten triangle corresponding to an indecomposable compact object $Z$ as follows. Serre duality gives an isomorphism
$$D\End_{\D(A)}(Z)=D\Hom_{\D(A)}(Z,Z)\cong\Hom_{\D(A)}(Z,Z\otimes_A^L DA)$$
Let $\Gamma:=\End_{\D(A)}(Z)$. Since $\Gamma$ is a finite dimensional local algebra, the projective cover of $\Gamma$-modules $\Gamma\to\Gamma/{\rad\Gamma}$ gives rise to an injective envelope of $\Gamma$-modules $\Gamma/{\rad\Gamma}\to D\Gamma$. The corresponding canonical map $\Gamma\to\Gamma/{\rad\Gamma}\to D\Gamma$ is given by an element in $D\Gamma=D\End_{\D(A)}(Z)$.
Take the map $Z\to Z\otimes_A^L DA$ that corresponds to this element under the Serre duality isomorphism and complete it to a triangle. This gives an Auslander-Reiten triangle
$$\Sigma^{-1}Z\otimes_A^L DA\to Y\to Z\to Z\otimes_A^L DA.$$
\end{rem}

\begin{rem}\label{rem:gor}
%Let $k$ be as before a commutative noetherian complete local ring. In the case of a noetherian 
In the case of a finite dimensional $k$-agebra considered as differential graded $k$-algebra concentrated in degree zero, the derived category $\D(A)$ is the usual (unbounded) derived category $\D(\Mod A)$ of the category of all $A$-modules and $\D^c(A)$ is the category of perfect complexes, i.e.\ complexes isomorphic in $\D(A)$ to a bounded complex of finitely generated projective modules. In this situation we have:
\begin{enumerate} 
\item The third condition in Proposition~\ref{prop:gor} says that $A$ and $A^\op$ have finite injective dimension. This is the definition of a Gorenstein algebra given by Auslander-Reiten in \cite{A}. 
%In view of Theorem~\ref{thm:poincare} the question arises if the definition of a Gorenstein differential graded algebra given by Frankild, J\o rgensen \cite{FJ} is related to the existence of Auslander-Reiten triangles also in the general setup. But this is not clear to us. We only see that the equivalent conditions in the proposition imply that the functor $\RHom(-,A)\colon\D(A)\to\D(A^\op)$ restricts to $\D(A)^f$ respectively $\D(A)^{f,b}$ (see the following Lemma~\ref{lem:^f}).
\item In the case where $A$ has finite global dimension, Happel's result in \cite{H}, where he shows that $\D^b(\mod A)$ has Auslander-Reiten triangles, is a special case of the proposition since in this case $\D^b(\mod A)\simeq\D^c(\Mod A)=\D^c(A)$.
\end{enumerate}
\end{rem}

% \begin{lem}\label{lem:^f}
% Let $DA\in\D^c(A^\op)$ then the functor $\RHom_A(-,A)\colon\D(A)\to\D(A^\op)$ restricts to a functor $\D(A)^f\to\D(A^\op)^f$.
% \end{lem}
% \begin{proof}
% Let $M\in\D(A)^f$. From $\RHom_{A^\op}(A,DM)\cong DM\in\D(A^\op)^f$ it follows that the functor $\RHom_{A^\op}(-,DM)$ sends compact objects to objects in $\D(A)^f$. Since $$\RHom_A(M,A)\cong\RHom_A(DDM,DDA)\cong\RHom_{A^\op}(DA,DM)$$
% we get the assertion.
% \end{proof}

\begin{rem}\label{rem:CY}
Let $A$ be a differential graded algebra with degreewise finite dimensional cohomology. In addition, let $\D^c(A)$ be {\em Calabi-Yau} of dimension $d$, i.e.\ the Serre functor $S=-\otimes^L_ADA$ is naturally isomorphic to $\Sigma^d$, the $d$-fold composition of the shift functor. In this case $\D^c(A)$ has Auslander-Reiten triangles, and we clearly have that $DA\cong\Sigma^dA$ is compact. The Auslander-Reiten translate in this case is $S\circ\Sigma^{-1}\cong\Sigma^{d-1}$.
\end{rem}

\newpage

\section{Simply connected differential graded algebras}

Let $k$ be a field. In the following we will consider {\em simply connected differential graded $k$-algebras of finite type}, i.e.\ positively graded differential graded $k$-algebras $A$ with finite dimensional cohomology algebra, $\Ho^0A\cong k$, and $\Ho^1A=0$. For simplicity, at some stages in Sections~3.3 and 3.4 we will assume that our simply connected differential graded algebras are {\em augmented}, i.e.\ the natural morphism of differential graded algebras $k\to A$ has a splitting $A\to k$ called the augmentation morphism. In particular, $k$ gets the structure of a differential graded $A$-module. All results in these sections carry over to not necessarily augmented simply connected differential graded algebras but $k_A$ has to be replaced by an appropriate object. % Note that $A$ is an {\em augmented} differential graded algebra.

\subsection{A finite dimensional model}
Up to weak equivalence (i.e.\ equivalence via a series of quasi-isomorphisms) one can replace a simply connected differential graded algebra of finite type by a finite dimensional differential graded algebra $A$ with $A^0=k,~A^1=0$, and $\sup A=\sup\Ho^*A=:d$.
$$A\colon~\cdots\to 0\to k\to 0\to A^2\to\cdots\to A^d\to 0\to\cdots$$
Note that $A$ is an augmented differential graded algebra with augmentation morphism $A\to A/A^{\geq 1}\cong k$.% gives $k$ the structure of a differential graded $A$-module.
%:=\sup\{n\in\mathbb Z\mid \Ho^n A\neq 0\}$ 

The construction of such a finite dimensional model proceeds in two steps:
\begin{enumerate}
\item Inductive construction of a {\em free model} using graded tensor algebras.
\item Right truncation of the free model (cf.\ \cite[Section~12 Example 6]{FHT}).
\end{enumerate}
ad (1): The construction has its origin in \cite{AHi} in which the so-called Adams-Hilton model for the singular chains of the loop space of a simply connected CW-complex is introduced. The construction of the free model for our simply connected differential graded algebra $A$, that we will recall now, is the same as in the graded commutative case where one can construct a minimal Sullivan model using the free graded commutative algebra (see \cite[12.2 Proposition]{FHT}), but one uses the graded tensor algebra instead of its graded commutative quotient. Given a graded vector space $V$, let $TV:=\coprod_{i=0}^\infty V^{\otimes i}$ be the associated {\em graded tensor algebra}. The following two properties of the tensor algebra are important for the inductive construction of the free model.
\begin{enumerate}
\item[(i)] Every $k$-linear map of degree zero from $V$ to a graded algebra $B$ can uniquely be extended to a morphism of graded algebras $TV\to B$.
\item[(ii)] Every $k$-linear map of degree $n$ from $V$ to $TV$ can be uniquely extended to a derivation $d\colon TV\to TV$ of degree $n$, i.e.\ a $k$-linear map of degree $n$ satisfying $d(xy)=d(x)y+(-1)^{n |x|}xd(y)$ for all $x,y\in TV$ and $x$ homogeneous.
\end{enumerate}
One starts with the vectorspace $V^2=\Ho^2(A)$ in degree 2 and defines $m_2\colon (TV^2,0)\to (A,d)$ by extending the $k$-linear map that assigns representing cocycles to the elements of a chosen basis of $\Ho^2(A)$. Clearly $\Ho^i(m_2)$ is an isomorphism for $i\leq 2$ and $\Ho^3(m_2)$ is injective since $(TV^2)^3=0$. If $m_k\colon (TV^{\leq k},d)\to(A,d)$ has been constructed, then let $V^{k+1}$ be the vectorspace in degree $k+1$ that is the direct sum of a complement $C$ of $\Im\Ho^{k+1}(m_k)$ in $\Ho^{k+1}(A)$ and $\Ker\Ho^{k+2}(m_k)$. Extend the differential on $TV^{\leq k+1}$ by sending $C$ to zero and the elements of a chosen basis $\{\overline{z_\a}\}$ of $\Ker\Ho^{k+2}(m_k)$ to corresponding representing cocycles $z_\a$. Define $m_{k+1}\colon (TV^{\leq k+1},d)\to(A,d)$ by sending the elements of a basis of $C$ to corresponding representing cocycles and the elements $\overline{z_\a}$ of the chosen basis of $\Ker\Ho^{k+2}(m_k)$ to elements in the preimage under $d$ of $m_k(z_a)$.
\\
ad (2): Note that there is a vectorspace decomposition
$$A^i\cong\Ker(d^i)\oplus\Im(d^i)\cong\Im(d^{i-1})\oplus\Ho^iA\oplus\Im(d^i).$$
%^{i+1},~\Ker(d)^i\cong\Im(d)^i\oplus\Ho^i A.$$
We choose some $k$-linear splitting $p^i$ for $d^i\colon A^i\to\Im(d^i)$. Using that $A$ is positively graded, $A^0=k$, and $A^1=0$ one sees that the following subcomplex is a differential graded ideal in $A$.
$$\xymatrix@=4mm{\cdots \ar[r] & k \ar[r] & 0 \ar[r] & A^2 \ar[r]^{d^2} & \cdots \ar[r] & A^{d-2} \ar[r]^{d^{d-2}} & A ^{d-1} \ar[r]^{d^{d-1}} & A^{d} \ar[r]^{d^d} & A^{d+1} \ar[r] & \cdots \\
\\
\cdots \ar[r] & 0 \ar[r]\ar@_{{(}->}[uu] & 0 \ar[r]\ar@_{{(}->}[uu] & 0 \ar[r]\ar@_{{(}->}[uu] & \cdots \ar[r] & 0 \ar[r]\ar@_{{(}->}[uu] & \Im(d^{d-1}) \ar[r]^<<<{1\brack 0}\ar@_{{(}->}[uu]^{p^{d-1}} & \Im(d^{d-1})\oplus\Im(d^d)\ar[r]^>>{[0~\text{incl}]}\ar@_{{(}->}[uu]^{[\text{incl}~p^d]} & A^{d+1} \ar[r]\ar@{=}[uu] & \cdots
}$$
The quotient differential graded algebra is then the desired algebra.

\begin{rem}\label{rem:model}
(1) Since quasi-isomorphic differential graded algebras give equivalent derived categories (see Lemma~\ref{lem:equ}) and we are interested in properties of the derived category we may always replace our simply connected differential graded algebra of finite type by a differential graded algebra $A$ that is a finite dimensional positively graded differential graded $k$-algebra with $A^0=k,~A^1=0$, and $\sup A=\sup\Ho^*A$. In the following we will refer to such a replacement as a {\em finite dimensional model}.\\
(2) If we start with an augmented simply connected differential graded algebra $A$ of finite type then the finite dimensional model is weakly equivalent to $A$ in the category of augmented differential graded algebras.
% and suppose $d:=\sup A=\sup\Ho^*A$. %:=\sup\{n\in\mathbb Z\mid \Ho^n A\neq 0\}$.
%$$A\colon~\cdots\to 0\to k\to 0\to A^2\to\cdots\to A^d\to 0\to\cdots$$
%Remark that $A$ is an {\em augmented} differential graded algebra since $k\cong A/A^{\geq 1}$ is a differential graded $A$-module.
\end{rem}

\begin{exm}
The singular cochain differential graded algebra $C^*(X;k)$ of a simply connected topological space $X$ with finite dimensional cohomology is a simply connected differential graded algebra of finite type.  The Hurewicz theorem gives $\Ho_0(X)\cong k,~\Ho_1(X)=0$ and therefore $\Ho^0(X)\cong k,~\Ho^1(X)=0$. The cohomology algebra $\Ho^*(X;k)$ is graded commutative, and provided the characteristic of $k$ is zero it is even possible to construct a graded commutative finite dimensional model for $C^*(X;k)$ (see \cite{FHT}). If we fix a base point $x$ in the topological space $X$, then the inclusion map $\{x\}\to X$ induces an augmentation morphism $C^*(X;k)\to k$.
\end{exm}

\subsection{Minimal semi-free resolutions}
Let us now assume that our simply connected differential graded algebra $A$ of finite type is finite dimensional, positively graded with $A^0=k,~A^1=0$, and $\sup A=\sup \Ho^*A$.
$$A\colon~\cdots\to 0\to k\to 0\to A^2\to\cdots\to A^d\to 0\to\cdots$$
For all differential graded modules over differential graded algebras of this form %, in particular those which are compact as objects in the derived category, 
there exist minimal semi-free resolutions (cf.\ \cite{AFH2}). % that are very useful for calculations.
A {\em semi-free resolution} of $M$ is a quasi isomorphism $L\to M$, where $L$ is {\em semi-free}, i.e.\ there exists an ascending exhaustive filtration of $A$-modules
$$0\subseteq L(0)\subseteq L(1)\subseteq\dots\subseteq L(i)\subseteq\dots\subseteq L,\quad\quad\bigcup_{i\geq 0} L(i)=L$$
with subquotients $L(i)/L(i-1)$ that 
%are free as graded $A$-modules on a basis of cycles. In our situation it is equivalent that the subquotients are isomorphic to direct sums of shifts of $A$.
are isomorphic in $\mathbf C(A)$ to direct sums of shifts of copies of $A$. 
Note that a semi-free differential graded module is in particular homotopically projective (see Lemma~\ref{lem:homproj}). A differential graded module $L$ is called {\em minimal} if every morphism of differential graded modules $L\to L$ that is a homotopy equivalence is already an isomorphism. In the situation of our particular differential graded algebra $A$ an $A$-module $L$ is minimal if and only if $$d(L)\subseteq\rad A\cdot L=A^{\geq 1}L.$$
This implies: If $L$ is minimal, then $\H om_A(L,k)$ and $L\otimes_Ak$ have vanishing differentials. The existence of minimal semi-free resolutions for differential graded modules $M$ with $\inf\Ho^*M>-\infty$ was stated without a proof in \cite{FHT2}. In particular this gives the existence of minimal semi-free resolutions for differential graded modules that are compact in the derived category. In the following form this result is stated and proved in \cite{FJ2}.

% Let $M$ be a differential graded $A$-module. A {\em semi-free resolution} of $M$ is a quasi isomorphism $L\to M$, where $L$ is {\em semi-free}, i.e.\ there exists a filtration of $A$-modules
% $$0\subseteq L(0)\subseteq L(1)\subseteq\dots\subseteq L(i)\subseteq\dots\subseteq L$$
% with {\em semi-free} quotients $L(i)/L(i-1)$, i.e.\ the underlying graded modules $(L(i)/))^\natural$ over the underlying graded algebra $A^\natural$ are free on a basis of cycles. The module $L$ is called {\em minimal} if every homotopy equivalence $L\to L$ is already an isomorphism of chain complexes. In the situation of our particular differential graded algebra $A$ this is equivalent to
% $$\delta(L)\subseteq\rad A\cdot L=A^{\geq 1}L.$$
% 
% We list some useful properties in this context:
% \begin{enumerate}
% \item If $L$ is semi-free then $L$ is homotopically projective, in particular for a semi-free resolution $L\to M$ and a differential graded $A$-module $X$ it holds $\RHom_A(M,X)=\H om_A(L,X)$.
% \item If $L$ is minimal then $\H om_A(L,k)$ and $L\otimes_Ak$ have vanishing differentials.
% \item $L$ is minimal semi-projective if and only if every quasi-isomorphism $X\to L$ has a right inverse.
% \item If $L$ is homotopically projective then there is a natural isomorphism $\Hom_{\D(A)}(L,X)\cong\Hom_{\mathbf H(A)}(L,X)$. In particular:
% \begin{enumerate}
% \item Every morphism $L\to X$in $\D(A)$ is represented by a morphism in $\mathbf C(A)$.
% \item Let $L$ in addition be minimal. If $L\in\D(A)$ is indecomposable then also $L\in\mathbf C(A)$ is indecomposable.
% \end{enumerate}
% \end{enumerate}

\begin{lem}\label{lem:sfr}
Let $A$ be a differential graded algebra that is finite dimensional, positively graded with $A^0=k,~A^1=0$, and $\sup A=\sup \Ho^*A$. Further let $M$ be a differential graded $A$-module with $\inf\Ho^*M=i>-\infty$§ and $\Ho^n(M)$ finite dimensional for all $n\in\mathbb Z$. Then the following hold.
\begin{enumerate}
\item There exists a minimal semi-free resolution $L\to M$ which has a semi-free filtration with quotients as indicated,
$$\xymatrix@!0{
& \Sigma^{-i}A^{(\gamma_0)} &&&& \Sigma^{-i}A^{(\gamma_1)} &&&& \Sigma^{-(i+1)}A^{(\gamma_2)} &&& \cdots \\
0\ar@{-}[ur] & \subseteq & F(0) \ar@{-}[ul]\ar@{-}[dr] & \subseteq & G(1) \ar@{-}[ur]\ar@{-}[dl] & \subseteq & F(1) \ar@{-}[ul]\ar@{-}[dr] & \subseteq & G(2) \ar@{-}[ur]\ar@{-}[dl] & \subseteq & F(2) \ar@{-}[ul] & \subseteq & \cdots & \subseteq & L \\
&&& \Sigma^{-(i+1)}A^{(\delta_0)} &&&& \Sigma^{-(i+2)}A^{(\delta_1)} &&&&& \cdots
}$$
with $\gamma_j$ and $\delta_j$ finite.
\item $L^\natural\cong\coprod_{j\geq i}\Sigma^{-j}(A^\natural)^{(\beta_j)}$, where $\beta_i=\gamma_0+\gamma_1$ and $\beta_j=\delta_{j-(i+1)}+\gamma_{j-i+1}$ for $j>i$.
\item If the filtration terminates, then there exists a {\em semi-split} exact sequence (i.e.\ it splits as a sequence of the underlying graded modules)
$$0\to P\to L\to\Sigma^{-w}A^{(\a)}\to 0$$
with $\a\neq 0$ finite, $\e_j$ finite, $w\geq i$, P being homotopically projective, and
$$P^\natural\cong\coprod_{j\leq w}\Sigma^{-j}(A^\natural)^{(\epsilon_j)}.$$
\end{enumerate}
%Then there exists a minimal semi-free resolution $L\to M$ of $M$ such that the quotients of the filtration of $L$ are of the form $\Sigma^{-u-j}A^{n}$. In particular every compact object $M$ has a minimal semi-free resolution of this form.
%, i.e.\ a quasi-isomorphism $L\to M$ such that $L$ has a filtration $0\subseteq L(0)\subseteq L(1)\subseteq\dots\subseteq L(i)\subseteq\dots\subseteq L$ with semi-free quotients $L(i)/L(i-1)$,i.e.\ $L(i)/L(i-1)$ $A^\natural$-free on a basis of cycles. Moreover, 
\end{lem}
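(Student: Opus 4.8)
The plan is to construct $L$ as an increasing union of finite semi-free submodules, adjoining free generators so as to correct the comparison map with $M$ one cohomological degree at a time, all the while keeping the differential inside $\rad A\cdot L=A^{\geq 1}L$ so that the resulting resolution is minimal. For the base step, since $i=\inf\Ho^*M$ I choose cocycles representing a basis of the finite dimensional space $\Ho^iM$ and let them define $F(0)=\Sigma^{-i}A^{(\gamma_0)}\to M$ with $\gamma_0=\dim\Ho^iM$. Because $A^1=0$ we have $\Ho^{i+1}(\Sigma^{-i}A)=\Ho^1A=0$, so a generator placed in degree $m$ contributes cohomology only in degrees $m,m+2,m+3,\dots,m+d$; in particular $F(0)\to M$ is already an isomorphism in degree $i$ and there is nothing to repair in degree $i+1$. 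This degree-one gap is what makes the whole induction run.

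Maintaining the invariant that $\Ho^j(F(n)\to M)$ is an isomorphism for $j\leq i+n$, I pass from $F(n-1)$ to $F(n)$ through an intermediate module $G(n)$, giving the alternating filtration $F(0)\subseteq G(1)\subseteq F(1)\subseteq\cdots$. The first substep ($G$-step) adjoins $\delta_{n-1}$ cocycle generators in degree $i+n$, with zero differential, mapping to cocycles of $M$ that fill the cokernel of $\Ho^{i+n}(F(n-1))\to\Ho^{i+n}M$; this makes the map surjective in degree $i+n$. The second substep ($F$-step) adjoins $\gamma_n$ generators in degree $i+n-1$ whose differentials are the degree-$(i+n)$ cocycles of $F(n-1)$ representing the kernel, thereby making the map injective in degree $i+n$. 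The crucial observation, which simultaneously fixes the degrees and preserves minimality, is that $F(n-1)$ has no generators in degree $i+n$ (its top generators sit in degree $i+n-1$, and by the gap they produce no cohomology in degree $i+n$); hence every degree-$(i+n)$ cocycle of $F(n-1)$ already lies in $A^{\geq 2}F(n-1)\subseteq A^{\geq 1}L$, so setting $dg=z$ keeps $d(L)\subseteq A^{\geq 1}L$. Finiteness of each $\gamma_j$ and $\delta_j$ is immediate, since at every substep one fills the cokernel, respectively kernel, of a map between the finite dimensional spaces $\Ho^n(F)$ and $\Ho^nM$.

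Putting $L:=\bigcup_nF(n)$, cohomology commutes with this filtered colimit and, by positivity together with the degree gap, in each fixed degree the comparison map stabilises to an isomorphism after finitely many stages; hence $L\to M$ is a minimal semi-free resolution, which is (1). Statement (2) is then bookkeeping: $L^\natural$ is free on the totality of adjoined generators, so $\beta_j$ counts the generators in degree $j$, and reading the filtration off ($\gamma_0,\gamma_1$ in degree $i$; $\delta_{n-1}$ and $\gamma_{n+1}$ in degree $i+n$) gives $\beta_i=\gamma_0+\gamma_1$ and $\beta_j=\delta_{j-(i+1)}+\gamma_{j-i+1}$ for $j>i$. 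For (3), if the filtration terminates then $L$ is finite semi-free; let $w\geq i$ be the top generation degree and $\Sigma^{-w}A^{(\a)}$ the free piece generated there, so $\a\neq 0$. Minimality sends the differential of each degree-$w$ generator into $A^{\geq 1}L$, which in degrees exceeding $w$ is spanned by the lower generators; thus these differentials land in the submodule $P$ generated by all remaining generators, and $L/P\cong\Sigma^{-w}A^{(\a)}$ as differential graded modules. Since $L^\natural=P^\natural\oplus\Sigma^{-w}(A^\natural)^{(\a)}$ the sequence $0\to P\to L\to\Sigma^{-w}A^{(\a)}\to 0$ is semi-split, $P$ is semi-free and hence homotopically projective by Lemma~\ref{lem:homproj}, and $P^\natural\cong\coprod_{j\leq w}\Sigma^{-j}(A^\natural)^{(\epsilon_j)}$ with the $\epsilon_j$ finite.

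The step I expect to be the main obstacle is reconciling minimality with the quasi-isomorphism property throughout the induction. Producing \emph{some} semi-free resolution by iterated mapping cones is routine; the real content is arranging each adjunction so that the differential never escapes $A^{\geq 1}L$, which rests on the fact that the only new cocycles one must kill are decomposable because they originate from $\Ho^{\geq 2}A$ acting on earlier generators, and on placing the new generators in exactly the degrees $i+n$ and $i+n-1$ dictated by the alternating filtration. This is precisely where all three hypotheses $A^0=k$, $A^1=0$ and $\sup A=\sup\Ho^*A$ (finiteness, boundedness and the degree-one gap) are needed.
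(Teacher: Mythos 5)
Your construction coincides with the one the paper itself relies on (it cites and sketches the Frankild--J{\o}rgensen proof): alternately adjoin free generators in degree $i+n$ to make $\Ho^{i+n}$ of the comparison map surjective, then generators in degree $i+n-1$ whose differentials kill the kernel, with minimality guaranteed because $A^1=0$ and positive grading force those kernel cocycles to be decomposable, i.e.\ to lie in $A^{\geq 1}L$. Your added bookkeeping for (2) and your argument for (3) --- that minimal differentials strictly lower generator degree, so the generators below the top degree $w$ span a differential graded submodule $P$ that is semi-free with $L/P\cong\Sigma^{-w}A^{(\a)}$ --- are correct and fill in details the paper delegates to its reference.
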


Following the proof of this lemma in \cite{FJ2} we describe the construction of such a minimal semi-free resolution: One starts with a morphism 
$$\a\colon F(0):=\Sigma^{-i}A^{(\gamma_0)}\to M$$ such that $$\Ho^i(\a)\colon\Ho^i(\Sigma^{-i}A^{(\gamma_0)})\to\Ho^i(M)$$
 is an isomorphism. If one has constructed 
 $$\a(n-1)\colon F(n-1)\to M$$ such that
 $$\Ho^{i+j}(\a(n-1))\colon\Ho^{i+j}F(n-1)\to\Ho^{i+j}M$$ is an isomorphism for $0\leq j\leq n-1$, then one first extends $\a(n-1)$ to 
 $$\beta(n)\colon {G(n):=\Sigma^{-(i+n)}A^{(\delta_n)}\amalg F(n-1)}\to M$$ such that $$\Ho^{i+n}(\beta(n))\colon\Ho^{i+n}G(n)\to\Ho^{i+n}M$$ is surjective, $\Ho^{i+n}(\beta(n))(\Ho^{i+n}(\Sigma^{-(i+n)}A^{(\delta_n)}))$ is complemental to $\Ho^{i+n}(\beta(n))(\Ho^{i+n}(F(n-1))$, and $\Ker(\Ho^{i+n}(\beta(n))\subseteq\Ho^{i+n}(F(n-1))$. In the next step one `kills' representing cocycles in $\Ker(\Ho^{i+n}(\beta(n))$ by constructing the mapping cone of a morphism
 $$\delta\colon\Sigma^{-(i+n)}A^{(Y)}\to G(n),~1_y\mapsto y$$ with $\Ho^{i+n}(\delta)$ injective, and $Y$ a set of representing cocycles such that their cohomology classes generate $\Ker(\Ho^{i+n}(\beta(n))$. One extends $\beta(n)$ to
 $$\a(n)\colon F(n)\to M$$ by mapping $1_y\in\Sigma^{-(i+n)+1}A^{(Y)}$ to $m$, where $\beta(n)(y)=d(m)$.

\begin{exm}
A minimal semi-free resolution of $k_A$ can be constructed as follows: First one considers the projection $m_0\colon A\to k_A$. Then degree by degree one inductively `kills' cocycles in a basis of $\Ker\Ho^*(m_i)$.
\end{exm}

\subsection{J\o rgensen's results}
Using minimal semi-free resolutions J\o rgensen shows the following characterisations for the existence of Auslander-Reiten triangles in $\D^c(A)$.

\begin{thm}\cite[Theorem~5.1]{J1}\label{thm:poincare}
Let $A$ be a simply connected differential graded $k$-algebra of finite type and $d:=\sup\Ho^*A$. Then the following conditions are equivalent.
\begin{enumerate}
\item The category $\D^c(A)$ has Auslander-Reiten triangles.
\item The category $\D^c(A)$ is {\em Calabi-Yau} of dimension $d$, i.e.\ the Serre functor is naturally isomorphic to $\Sigma^d$.
\item The differential graded algebra $A$ satisfies {\em Poincar\'e duality} of dimension $d$, i.e.\ $D\Ho^*A$ and $\Sigma^d\Ho^*A$ are isomorphic as left $\Ho^*A$-modules and as right $\Ho^*A$-modules.
%$$D\Ho^*A\cong\Sigma^d\Ho^*A.$$
\item The objects $DA$ and $\Sigma^dA$ are isomorphic in $\D(A)$ and in $\D(A^\op)$.
%$$DA\cong\Sigma^dA.$$
\end{enumerate}
If $A$ is augmented, then the conditions are also equivalent to:
\begin{enumerate}
\item[(5)] The differential graded algebra $A$ is {\em Gorenstein} of dimension $d$ in the sense of Avramov-Foxby \cite{AF} respectively F\'elix-Halperin-Thomas \cite{FHT2}, i.e.\ there are isomorphisms of graded $k$-vectorspaces
$$\Ext^*_A(k,A)\cong\Sigma^{-d}k\text{ and } \Ext^*_{A^\op}(k,A)\cong\Sigma^{-d}k.$$
%\item $\dim\Ext_A(k,A)<\infty\text{ and } \dim\Ext_{A^\op}(k,A)<\infty$
% \item The differential graded algebra $A$ is {\em Gorenstein} in the sense of \cite{FJ}, i.e. the functor $$\RHom_A(-,A)\colon\D^f(A)\to\D^f(A^\op)$$ is a duality.
\end{enumerate}
%If in addition $A$ is augmented then
\end{thm}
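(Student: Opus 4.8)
The plan is to treat condition (4), the isomorphism $DA\cong\Sigma^dA$ in $\D(A)$ and in $\D(A^\op)$, as the \emph{hub} to which everything else is connected, handling the Gorenstein condition (5) separately at the end. Throughout I use that $\D^c(A)=\langle A\rangle_{\rm thick}$, so an exact functor on $\D^c(A)$ is controlled by its value on the generator $A$, and that by Proposition~\ref{prop:gor} the existence of Auslander-Reiten triangles is equivalent to $DA$ being compact on both sides, with Serre functor $S=-\otimes_A^LDA$. Since $A$ is a finite-dimensional model, $\sup\Ho^*A=d$ and $\inf\Ho^*A=0$, so $\amp A=d$. The easy implications are then: (2)$\Rightarrow$(4) by evaluating $S\cong\Sigma^d$ at $A$, giving $DA=S(A)\cong\Sigma^dA$ and symmetrically over $A^\op$; (4)$\Rightarrow$(2) by upgrading $DA\cong\Sigma^dA$ to a natural isomorphism $S\cong\Sigma^d$, where, by uniqueness of the Serre functor, it suffices via Lemma~\ref{lem:duality} to check $X\otimes_A^LDA\cong\Sigma^dX$ naturally in compact $X$, obtained by d\'evissage from $X=A$ once the isomorphism is realised by $A$-bimodules (the canonical generator of $\Ho^0(DA)=Dk=k$ makes the two structures compatible); (2)$\Rightarrow$(1) trivially; and (4)$\Rightarrow$(1) because $\Sigma^dA$ is compact, so $DA$ is compact on both sides and Proposition~\ref{prop:gor} applies.

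Next I would prove (3)$\Leftrightarrow$(4). Applying $\Ho^*$ to $DA\cong\Sigma^dA$ and using $\Ho^*(D-)=D\Ho^*(-)$ and $\Ho^*(\Sigma^d-)=\Sigma^d\Ho^*(-)$ yields the Poincar\'e duality isomorphisms $D\Ho^*A\cong\Sigma^d\Ho^*A$ of graded $\Ho^*A$-modules on each side, so (4)$\Rightarrow$(3). For (3)$\Rightarrow$(4), the right-module Poincar\'e duality says the map of right $\Ho^*A$-modules $\Sigma^d\Ho^*A\to D\Ho^*A$ sending the degree $-d$ generator (the image of $1\in\Ho^0A$) to a chosen cyclic generator of $D\Ho^*A$ is an isomorphism. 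I lift a representing cocycle of that generator in $(DA)^{-d}$ to a morphism of differential graded modules $\phi\colon\Sigma^dA\to DA$; since $\Sigma^dA$ is free, $\phi$ is determined by this cocycle, it is right $A$-linear, and $\Ho^*(\phi)$ is exactly the above module map, hence an isomorphism. Thus $\phi$ is a quasi-isomorphism, and the left-module statement gives the $\D(A^\op)$ isomorphism analogously.

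The substantive direction is (1)$\Rightarrow$(4). Assuming Auslander-Reiten triangles, $DA$ is compact and indecomposable, being the image $S(A)$ of the indecomposable $A$ under the autoequivalence $S$. I take a finite minimal semi-free resolution $L\xrightarrow{\sim}DA$ from Lemma~\ref{lem:sfr}, so $L^\natural\cong\coprod_j\Sigma^{-j}(A^\natural)^{(\beta_j)}$ with finitely many finite $\beta_j$; by minimality the complexes $L\otimes_Ak$ and $\Hom_A(L,k)$ have vanishing differential, so the $\beta_j$ are read off from $\Ho^*(DA\otimes_A^Lk)$ and $\Ext_A^*(DA,k)$. The goal is to show a single $\beta_j$ is nonzero and equal to $1$, namely $\beta_{-d}=1$, forcing $DA\cong\Sigma^dA$. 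Here I would combine the amplitude inequalities of Proposition~\ref{prop:ampl} applied to $DA\otimes_A^Lk$ (which give $\inf\Ho^*(DA\otimes_A^Lk)=-d$ and control its amplitude) with the Serre-duality identity $\Hom_{\D(A)}(A,DA)\cong D\End_{\D(A)}(A)\cong k$ and its shifts, which pin down the extreme graded Betti numbers. I expect this step to be the \textbf{main obstacle}: turning the inequalities into the exact statement that the resolution is a single shifted free module of rank one is precisely where simple connectedness and the finite-dimensional model are used, and the bound must be shown to be tight rather than merely an inequality.

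It remains to treat (5) in the augmented case, equivalent to (4). The key computation is the adjunction $\RHom_A(k,DA)\cong D(k\otimes_A^LA)\cong Dk\cong k$, concentrated in degree $0$. Granting (4), substitute $DA\cong\Sigma^dA$ to get $k\cong\RHom_A(k,\Sigma^dA)=\Sigma^d\RHom_A(k,A)$, i.e. $\Ext_A^*(k,A)\cong\Sigma^{-d}k$, and symmetrically over $A^\op$, which is (5). For the converse (5)$\Rightarrow$(4) the difficulty is that $k$ is \emph{not} compact, so neither Proposition~\ref{prop:gor} nor Lemma~\ref{lem:duality} applies to it directly; instead I would invoke a derived Nakayama argument valid for simply connected $A$ of finite type, namely that $\RHom_A(k,-)$ detects vanishing of objects with bounded-below cohomology, to conclude that a comparison morphism $\Sigma^dA\to DA$ built from the generator of $\RHom_A(k,DA)\cong k$ has trivial cone, hence is an isomorphism. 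This detection principle is the second place where the finiteness hypotheses are essential.
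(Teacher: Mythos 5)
The paper itself contains no proof of this theorem: it is quoted from J\o rgensen \cite[Theorem~5.1]{J1}, whose argument in turn rests on the Serre-duality criterion (Proposition~\ref{prop:gor} here, going back to \cite{RV} and \cite{K1}) and on the Gorenstein--Poincar\'e-duality theorem of F\'elix-Halperin-Thomas \cite{FHT2} and Avramov-Foxby \cite{AF}. So your attempt has to be judged on its own merits. Your formal implications are essentially correct: (2)$\Rightarrow$(4), (4)$\Rightarrow$(1) via Proposition~\ref{prop:gor}, (3)$\Leftrightarrow$(4) by realising a duality class $\omega\in\Ho^{-d}(DA)$ as a morphism $\Sigma^dA\to DA$ of one-sided differential graded modules, and (4)$\Rightarrow$(5) via $\RHom_A(k,DA)\cong D(k\otimes_A^LA)\cong k$. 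The two load-bearing steps, however, each have a genuine gap.

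The first gap is (1)$\Rightarrow$(4), and the tools you name cannot close it. Replace $A$ by a finite dimensional model (which is automatically augmented) and let $L\to DA$ be a minimal semi-free resolution. By minimality its Betti numbers are $\beta_j=\dim\Ho^j(DA\otimes_A^Lk)$, and the adjunction $D(DA\otimes_A^Lk)\cong\RHom_{A^\op}(k,A)$ (using $DDA\cong A$) identifies them with $\dim\Ext_{A^\op}^{-j}(k,A)$. Proposition~\ref{prop:ampl} pins down $\inf\Ho^*(DA\otimes_A^Lk)=-d$ and bounds $\sup$ from \emph{below}; what you need is the opposite bound $\sup\leq-d$ together with $\beta_{-d}=1$, and note that $\beta_{-d}\geq\dim\Ho^dA$ by Lemma~\ref{lem:sfr}, so even $\dim\Ho^dA=1$ is part of what must be proved. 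The Serre-duality identities you invoke do not see this: by Lemma~\ref{lem:duality} with $X=A$ one gets $\Hom_{\D(A)}(\Sigma^jA,DA)\cong D\Ho^jA$, i.e.\ these groups merely recompute $\Ho^*(DA)=D\Ho^*A$, which is known from the outset and is independent of the $\beta_j$. What (1) actually yields is only $\sum_j\beta_j<\infty$, i.e.\ $\dim\Ext_{A^\op}(k,A)<\infty$ (and symmetrically over $A$); the assertion that this finiteness forces $\Ext^*_{A^\op}(k,A)\cong\Sigma^{-d}k$ is precisely the theorem of \cite{FHT2} and \cite{AF}, whose proof is a delicate analysis of the multiplicative structure of resolutions and cannot be recovered from the lemmas available in this paper.

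The second gap is (4)$\Rightarrow$(2). D\'evissage can only propagate the isomorphism property of an \emph{already given} natural transformation from $A$ to all of $\langle A\rangle_{\rm thick}$; it cannot create one, and the class of objects $X$ admitting some unnatural isomorphism $X\otimes_A^LDA\cong\Sigma^dX$ need not be closed under cones. To start the d\'evissage you need a natural transformation $\Sigma^d\to-\otimes_A^LDA$, in practice a morphism $\Sigma^dA\to DA$ of $A$-\emph{bimodules}. A one-sided isomorphism as in (4) corresponds to a cocycle $\omega\in(DA)^{-d}$, and $1\mapsto\omega$ is a bimodule map only if $\omega(ax)=\pm\omega(xa)$ for all $a,x$, a graded symmetry of the duality class that (4) does not provide; it is automatic when $\Ho^*A$ is graded commutative (e.g.\ for $A=C^*(X;k)$), but no commutativity is assumed for an abstract simply connected $A$. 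Your parenthetical claim that the canonical generator of $\Ho^0(DA)$ ``makes the two structures compatible'' is exactly the assertion requiring proof. (Incidentally, your (5)$\Rightarrow$(4) can be done without any Nakayama-type detection: given (5), the Betti-number identity above shows at once that the minimal semi-free resolution of $DA$ is a single copy of $\Sigma^dA$.)
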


In the following we will refer to a simply connected differential graded algebra $A$ of finite type satisfying the equivalent conditions in Theorem~\ref{thm:poincare} as simply connected {\em Gorenstein} differential graded algebra of finite type. Note that we do so even if $A$ is not augmented. If we use the notion Gorenstein for a differential graded algebra in a different context, then this will be mentioned explicitly.

\begin{rem}\label{rem:gorfj}
In \cite{FJ} Frankild-J\o rgensen define a differential graded algebra to be Gorenstein if the contravariant functor $$\RHom_A(-,A)\colon\D^{f,b}(A)\to\D^{f,b}(A^\op)$$ is a duality. Note that $\D^{f,b}(A)$ consists of those differential graded $A$-modules $X$ with $\dim\Ho^*X<\infty$. Let the characteristic of the field $k$ be zero and $A$ be an augmented simply connected differential graded algebra of finite type that is graded commutative in cohomology. Then it is shown in \cite{FIJ} that the Gorenstein condition of F\'elix-Halperin-Thomas \cite{FHT2} respectively Avramov-Foxby \cite{AF} (condition (5) in the theorem) is equivalent to the definition of Gorensteinness of Frankild-J\o rgensen. We also recommend \cite{DGI} to the reader for another consideration of Gorenstein differential graded algebras.
\end{rem}

\begin{rem}\label{rem:top}
%Let $k$ be a field and $X$ a simply connected topological space with $\dim\Ho^*(X;k)<\infty$. Then the singular cochain differential graded algebra $C^*(X;k)$ is quasi isomorphic to a finite dimensional non negatively graded differential graded algebra $A$ with $A^0=k$ and $A^1=0$. Especially $k\cong A/A^{\geq 1}$ is a differential graded $A$-module.
For $A=C^*(X;k)$ the singular cochain differential graded algebra of a simply connected topological space $X$ the condition (3) in Theorem~\ref{thm:poincare} says that $X$ has Poincar\'e duality.
\end{rem}

% \begin{lem}
% If $\D^c(A)$ has Auslander-Reiten triangles, then
% \begin{enumerate}
% \item No power of the Auslander-Reiten translate has a fixpoint.
% \item The Auslander-Reiten quiver has no loops.
% \end{enumerate}
% \end{lem}
%
% \begin{lem}\cite[Lemma~1.3]{J2}
% Let $\D^c(A)$ have Auslander-Reiten triangles, and $\C$ be a component of the Auslander-Reiten quiver. Then there exists a directed tree and an admissible group of automorphisms $\Pi\subseteq\Aut(\mathbb{Z}T)$ such that $\C$ is isomorphic to $\mathbb ZT/\Pi$ as stable translation quiver.
% \end{lem}

In addition to the result about the existence of Auslander-Reiten triangles, J\o rgensen also studied the shape of the components that appear in the corresponding Auslander-Reiten quiver $\Gamma(A)$. He formulated and proved the following result under some additional assumptions that we have been able to remove (see the discussion below and Section~3.4).

\begin{thm}\cite[Theorem~4.2]{J2}\label{thm:ZA} Let $A$ be a simply connected Gorenstein differential graded algebra of finite type with $\dim\Ho^*A\geq 2$. Then all connected components of the Auslander-Reiten quiver of $\D^c(A)$ are of the form $\mathbb Z A_\infty$.
%In addition let $k_A$ not be compact. 
\end{thm}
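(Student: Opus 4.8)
The plan is to realise each connected component as a stable translation quiver and then to determine its tree class by the Happel--Preiser--Ringel machinery, using the amplitude inequality of Proposition~\ref{prop:ampl} to discard every tree class except $A_\infty$. Since $A$ is Gorenstein, Theorem~\ref{thm:poincare} guarantees that $\D^c(A)$ has Auslander--Reiten triangles and is Calabi--Yau of dimension $d=\sup\Ho^*A$, so the Auslander--Reiten translate is $\tau\cong\Sigma^{d-1}$; being an autoequivalence, it turns the Auslander--Reiten quiver into a valued stable translation quiver. By Riedtmann's structure theorem each component $C$ therefore has the form $\mathbb{Z}T/G$ for a directed tree $T$ (its tree class) and an admissible group $G$. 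First I would record that $C$ has no $\tau$-periodic vertices: if $\tau^nX\cong X$ for some $n\neq0$ then $\Sigma^{n(d-1)}\Ho^*X\cong\Ho^*X$, which is impossible for $0\neq X\in\D^c(A)$ since $\Ho^*X$ is bounded and nonzero; this forces $G$ to be trivial.

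Next I would introduce the subadditive function. For $X\in\D^c(A)$ set $f(X)=\dim\Ho^*X$; this is finite by compactness, it is $\tau$-invariant because $\tau$ is a shift, and applying the long exact cohomology sequence to an Auslander--Reiten triangle $\tau Z\to Y\to Z\to\Sigma\tau Z$ yields $\sum_i f(Y_i)\le f(\tau Z)+f(Z)=2f(Z)$, where the $Y_i$ are the indecomposable summands of the middle term. Hence $f$ is a positive subadditive function on $C$, so by Happel--Preiser--Ringel $T$ is a Dynkin, a Euclidean, or an infinite Dynkin diagram ($A_\infty$, $A_\infty^\infty$, $B_\infty$, $C_\infty$ or $D_\infty$). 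To eliminate the finite and Euclidean cases I would show that $f$ is \emph{unbounded} on $C$: the connecting morphism of a nonsplit Auslander--Reiten triangle makes the above inequality strict along a ray of irreducible maps, producing indecomposables of arbitrarily large total cohomology. A bounded subadditive function forces a Dynkin tree, and a subadditive function on a Euclidean diagram is bounded, so both are excluded; the same unboundedness also rules out $A_\infty^\infty$, since a positive concave function on the two-sided infinite (simply laced) line must be bounded.

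It then remains to separate $A_\infty$ from $B_\infty$, $C_\infty$ and $D_\infty$. The diagrams $B_\infty$ and $C_\infty$ carry a nontrivial valuation, and I would exclude them by showing that the Auslander--Reiten quiver of $\D^c(A)$ has trivial valuation: Serre duality (Proposition~\ref{prop:gor}) makes the valuation symmetric, and the analysis of irreducible maps via minimal semi-free resolutions (Lemma~\ref{lem:sfr}), together with $A^0=k$, pins the valuation of every arrow down to $(1,1)$. This leaves $A_\infty$ and $D_\infty$, which differ precisely by the presence in $D_\infty$ of a branch vertex of valency three.

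\textbf{The main obstacle} is therefore the exclusion of $D_\infty$, i.e.\ proving that the middle term of every Auslander--Reiten triangle in $\D^c(A)$ has at most two indecomposable summands, so that no vertex of $C$ has three predecessors. Here the crude estimate $2f(Z)\ge\sum_i f(Y_i)$ does not suffice, and this is exactly the point at which the amplitude inequality enters and allows the removal of J\o rgensen's characteristic-zero hypothesis. The plan is to build, from the minimal semi-free resolution of an indecomposable $Z$, the minimal left almost split map out of $\tau Z$ explicitly, and to read off its indecomposable summands; Proposition~\ref{prop:ampl} (in particular $\amp(M\otimes_A^LX)\ge\amp X$ and the sharp equality $\inf\Ho^*(M\otimes_A^LX)=\inf\Ho^*M+\inf\Ho^*X$) provides, in arbitrary characteristic, the amplitude control over these summands that the graded-commutative Sullivan model previously supplied only when $\operatorname{char}k=0$. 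Once the middle term is shown to have at most two summands, $D_\infty$ is excluded, every component is a $\mathbb{Z}A_\infty$, and the proof is complete.
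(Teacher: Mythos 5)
Your overview sets up the right machinery (Riedtmann's structure theorem, Happel--Preiser--Ringel, $\tau\cong\Sigma^{d-1}$, absence of $\tau$-periodic vertices), but the step your whole argument turns on is false. You take $f(X)=\dim\Ho^*X$ and claim it is unbounded on each component. It is not: for $A=k[x]/(x^2)$ with $\deg x=d$ --- the finite dimensional model of the sphere, which satisfies the hypothesis $\dim\Ho^*A=2$ --- every indecomposable object of $\D^c(A)$ is a shift of one of the iterated cones $C_n$ of Section~4 (see Example~\ref{exm:sphere} and Section~6), and Lemma~\ref{lem:Ho} together with Remark~\ref{rem:Ho1} gives $\Ho^*C_n\cong k\oplus\Sigma^{-\sup C_n}k$, so $\dim\Ho^*$ takes the constant value $2$ on every vertex of every component. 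Your justification (``the connecting morphism makes the inequality strict along a ray, producing indecomposables of arbitrarily large total cohomology'') is not an argument, and no argument can exist. With only a bounded subadditive function, Happel--Preiser--Ringel cannot exclude finite Dynkin, Euclidean, or the bounded infinite tree classes, so the entire second half of the proposal (including the exclusion of $A_\infty^\infty$, $B_\infty$, $C_\infty$, $D_\infty$) collapses.

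The proof the paper relies on (J\o rgensen's, cited as \cite[Theorem~4.2]{J2}; the paper does not reprove it but only removes a hypothesis) uses a different function: $f(M)=\dim\Ho^*\RHom_A(M,k)\;(=\dim\Ho^*(M\otimes^L_Ak))$, which counts the shifted copies of $A$ in a minimal semi-free resolution (Lemma~\ref{lem:sfr}). This $f$ is $\tau$-invariant, unbounded on each component, and --- crucially --- \emph{additive} on Auslander-Reiten triangles, and additivity is exactly where the hypothesis ``$k_A$ is not compact'' enters: if the connecting maps in the long exact $\Hom(-,\Sigma^ik)$-sequence did not vanish, one would obtain a split monomorphism from $\tau Z$ into a shift of the indecomposable object $k_A$, forcing $k_A$ to be compact. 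The sole role of the amplitude inequality (Proposition~\ref{prop:ampl}, via Corollary~\ref{cor:ampl}) is to establish this non-compactness for every augmented simply connected $A$ of finite type with $\dim\Ho^*A\geq 2$, in arbitrary characteristic; it has nothing to do with controlling the number of indecomposable summands of middle terms or excluding $D_\infty$. Once one has a positive, additive, unbounded, $\tau$-invariant function, Happel--Preiser--Ringel yields tree class $A_\infty$ outright (and, with your correct observation that there are no $\tau$-periodic objects, that each component is $\mathbb ZA_\infty$); no separate analysis of valuations or of the branch vertex of $D_\infty$ is needed. So you identified the right external inputs but miscast both the key function and the place where the amplitude inequality does its work.
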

For the proof of the theorem J\o rgensen replaces $A$ by a finite dimensional model that we also denote by $A$ and considers the function
$$f\colon\Ob(\D(A))\to\mathbb N_0\cup\{\infty\},\quad M\mapsto\dim\Ho^*\RHom_A(M,k).$$%=\lev_{\D(A)}^A(M)$$
He shows that $f$ is an additive, unbounded, Auslander-Reiten translate periodic function on each component of the Auslander-Reiten quiver of $\D^c(A)$. Therefor he needs that $k_A$ is not compact and shows that this is the case for the singular cochain differential graded algebra $C^*(X;k)$ of a simply connected Poincar\'e duality space $X$ of Poincar\'e dimension $d\geq 2$ provided the characteristic of $k$ is zero. In fact, this is true for augmented simply connected differential graded algebras $A$ of finite type with $\dim\Ho^*A\geq 2$ in general as will follow from the amplitude inequality (Proposition~\ref{prop:ampl}) in Section~3.4.

We remark that the function $f$ has the following properties.

\begin{lem}
%Let $A$ be an augmented simply connected differential graded algebra of finite type. Then
\begin{enumerate}
\item The function $f$ is subadditive on triangles, i.e.\ for all exact triangles $X\to Y\to Z\to\Sigma X$ it holds $f(Y)\leq f(X)+f(Z)$.
\item An object $M\in\D(A)$ is compact if and only if $\dim\Ho^*M<\infty$ and $f(M)<\infty$. Moreover, in this case $f(M)=\dim\Ho^*\RHom_A(M,k)=\dim\Ho^*(M\otimes^L_A k)$ counts the number of shifted copies of $A$ that appear in the minimal semi-free resolution of $M$.
\end{enumerate}
\end{lem}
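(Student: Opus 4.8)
The plan is to treat the two parts separately: the first is a formal consequence of exactness, while the second rests on the minimal semi-free resolution supplied by Lemma~\ref{lem:sfr}.

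For part (1), I would apply the contravariant cohomological functor $\RHom_A(-,k)$ to the given triangle $X\to Y\to Z\to\Sigma X$. Being exact and contravariant, it produces a triangle $\RHom_A(Z,k)\to\RHom_A(Y,k)\to\RHom_A(X,k)\to\Sigma\RHom_A(Z,k)$, whose associated long exact cohomology sequence reads $\cdots\to\Ho^n\RHom_A(Z,k)\to\Ho^n\RHom_A(Y,k)\to\Ho^n\RHom_A(X,k)\to\cdots$. In any long exact sequence the middle term is squeezed between its two neighbours, so $\dim\Ho^n\RHom_A(Y,k)\le\dim\Ho^n\RHom_A(Z,k)+\dim\Ho^n\RHom_A(X,k)$ for every $n$; summing over $n$ gives $f(Y)\le f(X)+f(Z)$. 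This step is routine.

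For part (2), the key tool is a minimal semi-free resolution $L\to M$, which exists by Lemma~\ref{lem:sfr} as soon as $\dim\Ho^*M<\infty$ (this forces $\inf\Ho^*M>-\infty$ and each $\Ho^nM$ finite dimensional). Because $L$ is semi-free, hence homotopically projective (Lemma~\ref{lem:homproj}), I may compute $\RHom_A(M,k)\cong\Hom_A(L,k)$ and $M\otimes_A^Lk\cong L\otimes_Ak$ on the nose. Minimality means $d(L)\subseteq A^{\ge1}L$, so both $\Hom_A(L,k)$ and $L\otimes_Ak$ carry the zero differential, and their cohomology is simply the underlying graded vector space. Writing $L^\natural\cong\coprod_{j}\Sigma^{-j}(A^\natural)^{(\beta_j)}$ as in Lemma~\ref{lem:sfr}(2), and using that $\Hom_A(A,k)\cong k\cong A\otimes_Ak$ is one-dimensional (here $A^0=k$ supplies the single generator), each copy of $A$ contributes exactly one dimension. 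Hence $f(M)=\dim\Ho^*\RHom_A(M,k)=\sum_j\beta_j=\dim\Ho^*(M\otimes_A^Lk)$, which is precisely the number of shifted copies of $A$ in $L$.

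It remains to deduce the compactness criterion from this count. In the forward direction, a compact $M$ lies in $\langle A\rangle_{\rm thick}=\D^c(A)$, so its minimal semi-free resolution is finite, i.e.\ uses only finitely many copies of $A$; as a finite iterated extension of shifts of $A$ (each with finite-dimensional cohomology, since $\dim\Ho^*A<\infty$), it satisfies $\dim\Ho^*M<\infty$, and $f(M)=\sum_j\beta_j<\infty$. Conversely, if $\dim\Ho^*M<\infty$ the resolution $L$ exists, and $f(M)<\infty$ forces $\sum_j\beta_j<\infty$, so only finitely many copies of $A$ occur; then the semi-free filtration terminates and $L\in\langle A\rangle_{\rm thick}=\D^c(A)$, whence $M\cong L$ is compact. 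The main obstacle is the bookkeeping in part (2): one must verify carefully that minimality genuinely trivialises the differentials on $\Hom_A(L,k)$ and $L\otimes_Ak$, so that $\Ho^*$ merely counts generators, and that ``finitely many copies of $A$'' is equivalent to membership in $\langle A\rangle_{\rm thick}=\D^c(A)$. These two points are the crux linking the homological invariant $f$ to the combinatorics of the resolution; once they are in place, the equalities and the equivalence follow directly.
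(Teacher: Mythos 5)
Your part (1), your minimality count, and your converse direction all match the paper's proof: subadditivity via the long exact sequence of $\RHom_A(-,k)$; vanishing differentials on $\H om_A(L,k)$ and $L\otimes_Ak$ from $d(L)\subseteq A^{\geq 1}L$, giving $f(M)=\sum_j\beta_j$; and termination of the semi-free filtration when $\dim\Ho^*M<\infty$ and $f(M)<\infty$, so that $M\cong L\in\langle A\rangle_{\rm thick}=\D^c(A)$.

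The gap is in your forward direction. You write that a compact $M$ lies in $\langle A\rangle_{\rm thick}$, ``so its minimal semi-free resolution is finite,'' and then read off $\dim\Ho^*M<\infty$ and $f(M)<\infty$ from that finite resolution. But the implication ``$M\in\langle A\rangle_{\rm thick}$ implies the minimal semi-free resolution is finite'' is exactly the nontrivial content of this direction, and nothing established so far yields it: membership in $\langle A\rangle_{\rm thick}$ only gives a finite recipe of shifts, cones, finite sums and direct summands starting from $A$; it says nothing a priori about the minimal resolution. The natural way to justify the finiteness of the minimal resolution is via $f(M)=\sum_j\beta_j<\infty$ --- i.e.\ via the very finiteness statement you are trying to prove --- so as written the argument is circular. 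The paper's route goes in the opposite order: both invariants $\dim\Ho^*(-)$ and $f$ are finite on $A$, invariant under shifts, subadditive on triangles (for $f$ this is your part (1); for $\dim\Ho^*$ it is the usual long exact cohomology sequence), and monotone under passage to direct summands; hence finiteness of both propagates from $A$ to every object finitely built from $A$, giving $\dim\Ho^*M<\infty$ and $f(M)<\infty$ for compact $M$ directly. Only after that does your count show, a posteriori, that the minimal resolution is finite. This is a local and easily repaired defect --- you already have all the tools in hand --- but as stated the forward implication is not proved.
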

\begin{proof}
(1) This follows immediately from the fact that $\RHom_A(-,k)\colon\D(A)\to\D(A)$ is an exact and $\Ho^*\colon\D(A)\to\mathbf C_{gr}(k)$ a cohomological functor and looking at the corresponding long exact sequence.\\
(2) If $M$ is compact, then it is finitely built from $A$, hence $\dim\Ho^*M<\infty$ and $f(M)<\infty$. On the other hand if $\dim\Ho^*M<\infty$ and $f(M)<\infty$, then there is a minimal semi-free resolution $L\to M$ as in Lemma~\ref{lem:sfr} with semi-free filtration that terminates after finitely many steps. It follows that $M$ is finitely built from $A$, hence $M$ is compact. Now let $M$ be compact and $L\to M$ be a minimal semi-free resolution as in Lemma~\ref{lem:sfr}. We have
\begin{multline*}
\RHom_A(M,k)=\H om_A(L,k)\cong\H om_A(L,k)^\natural\cong\Hom_{A^\natural}(\coprod_{j\leq-u}\Sigma^j(A^\natural)^{(\beta_j)},k^\natural)\\
\cong\prod_{j\leq-u}\Sigma^j k
\end{multline*}
% \begin{equation*}
% \begin{split}
% \RHom_A(M,k)=\H om_A(L,k)\cong\H om_A(L,k)^\natural\cong\H om_{A^\natural}(\coprod_{j\leq-u}\Sigma^j(A^\natural)^{(\beta_j)},k^\natural)\\
% \cong\prod_{j\leq-u}\Sigma^j k
% \end{split}
% \end{equation*}
%$$\RHom_A(M,k)=\H om_A(F,k)\cong\H om_A(F,k)^\natural\cong\H om_{A^\natural}(\coprod_{j\leq-u}\Sigma^j(A^\natural)^{(\beta_j)},k^\natural)\cong\prod_{j\leq-u}\Sigma^j k.$$
and
$$M\otimes^L_A k=L\otimes_A k\cong(L\otimes_A k)^\natural\cong(\coprod_{j\leq-u}\Sigma^j(A^\natural)^{(\beta_j)}\otimes_A k^\natural)\cong\coprod_{j\leq-u}\Sigma^j k.$$
So if $M$ is compact, then $f(M)=\dim\Ho^*\RHom_A(M,k)=\dim\Ho^*(M\otimes^L_A k)$ counts the number of shifted copies of $A$ that appear in the minimal semi-free resolution $L$ of $M$.\\
\end{proof}

For spheres J\o rgensen computes the Auslander-Reiten quiver. We remark that he has stated the following result in the case where the ground field $k$ has characteristic zero, but the proof goes through also in arbitrary characteristic.

\begin{exm}\cite[Theorem~8.13]{J1}\label{exm:sphere}
For the $d$-dimensional sphere $S^d$, $d\geq 2$ the Auslan-der-Reiten quiver $\Gamma(S^d)$ consists of $d-1$ components of the form $\mathbb Z A_\infty$. %(provided the characteristic of $k$ is zero).
\end{exm}

We will come back to this example in Section~6. For this reason we will sketch J\o rgensen's arguments:
The singular cochain differential graded algebra $C^*(S^d;k)$ of a $d$-dimensional sphere $S^d$ is weakly equivalent to its cohomology differential graded algebra $\Ho^*(S^d)\cong k[x]/(x^2)=:A,$ where $\deg x=d$. Note that $A$ is a finite dimensional model for $C^*(S^d;k)$ in the sense of Remark~\ref{rem:model}(1). Consider $B=k[y]$ with $\deg y=-d+1$. The mapping cone $L$ of the canonical morphisms $$\Sigma^{d-1}B\to B$$ is a minimal semi-free resolution of $k_B$. Let $\E:=\mathcal{E}nd_B(L)$ be the endomorphism differential graded algebra of $L$.
% One calculates:
% \begin{equation*}
% \begin{split}
% \Ho^*\E&=\Ho^*\mathcal{H}om_B(L,L)\cong \Ho^*\mathcal{H}om_B(L,k)\cong\mathcal{H}om_B(L,k)^\natural\cong\Hom_{B^\natural}(L^\natural,k^\natural)\\ &\cong\Hom_{B^\natural}( B\coprod\Sigma^d B,k)\cong k\coprod\Sigma^{-d}k
% \end{split}
% \end{equation*}
% By choosing cycles $x,1\in\E$ one easily defines a quasi isomorphism $A\to\E$ of differential graded algebras. Hence we have an equivalence $\D(A)\simeq\D(\E)$.
Consider the adjoint pair of functors 
$$\xymatrix{
  \D(\E)\ar@<1ex>[rr]^{-\otimes_\E^Lk} && \D(B)\ar@<1ex>[ll]^{\RHom_{B}(k,-)}
  }.$$
%$$\D(A)\overset{-\otimes_A^L\E}{\underset{\RHom_{B^\op}(k,-)}\leftrightarrows}\D(\E).$$
In \cite{DG} it is shown that these functors restrict to an equivalence $\D(\E)\simeq\langle k_B\rangle_{loc}\subseteq\D(B)$. Note that $\langle k_B\rangle_{loc}\subseteq\D(B)$ is closed under direct summands and that $\E$ and $A$ are quasi-isomorphic differential graded algebras. Hence we have an equivalence $$\D^c(S^d)\simeq\D^c(\Ho^*(S^d;k))\simeq\D^c(A)\simeq\D^c(\E)\simeq\langle k_B\rangle_{\rm thick}\subseteq\D(B).$$
J\o rgensen computes the Auslander-Reiten components by analysing $\langle k_B\rangle_{thick}=\D^f(B)$.
It is not clear to us how to produce equivalences like this for examples other than the spheres.

The question of the number of components for a simply connected Gorenstein differential graded algebras of finite type in general will be treated in Section~4.

\subsection{An amplitude inequality}
We give a cochain analogue of the amplitute inequalities in \cite{J3}. Corollary~\ref{cor:ampl} is then exactly the statement that lets us weaken the assumptions on the finite dimensional model of the simply connected Gorenstein differential graded algebra of finite type in Theorem~\ref{thm:ZA} from $k_A$ is not compact to $\dim\Ho^*A\geq 2$.

The {\em amplitude} of a differential graded module $M$ is defined to be 
$$\amp M:=\sup\Ho^*M - \inf\Ho^*M.$$

\begin{prop}\label{prop:ampl}
Let $A$ be a simply connected differential graded algebra of finite type, $0\neq M\in\D^c(A)$, and $0\neq X\in\D(A^\op)$ with $\dim\Ho^*X<\infty$ .
Then
\begin{enumerate}
\item $\inf\Ho^*(M\otimes_A^L X)=\inf\Ho^*M+\inf\Ho^*X$.
\item $\sup\Ho^*(M\otimes_A^L X)\geq\inf\Ho^*M+\sup\Ho^*X$.
\item $\amp(M\otimes_A^L X)\geq\amp X$.
\end{enumerate}
In particular, $\amp M\geq\amp A$.
\end{prop}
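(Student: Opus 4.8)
The plan is to replace $A$ by a finite dimensional model (Remark~\ref{rem:model}) and to read off all three statements from a minimal semi-free resolution of $M$. Since $M$ is compact, Lemma~\ref{lem:sfr} provides a minimal semi-free resolution $L\to M$ with only finitely many free generators; write $i:=\inf\Ho^*M$ and let $u$ be the largest degree in which a generator occurs, so that $L^\natural\cong\coprod_{i\le j\le u}\Sigma^{-j}(A^\natural)^{(\beta_j)}$ with $\beta_i,\beta_u\neq 0$. As $L$ is homotopically projective I would compute $M\otimes_A^LX\cong L\otimes_AX$, whose underlying graded module is $\coprod_j\Sigma^{-j}(X^\natural)^{(\beta_j)}$. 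In particular $\Ho^*(M\otimes_A^LX)$ is supported in total degrees between $i+\inf\Ho^*X$ and $u+\sup\Ho^*X$, which already gives the easy bounds $\inf\Ho^*(M\otimes_A^LX)\ge i+\inf\Ho^*X$ and $\sup\Ho^*(M\otimes_A^LX)\le u+\sup\Ho^*X$.

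The key structural point, coming directly from minimality ($d(L)\subseteq A^{\ge 1}L$) together with the positive grading of $A$, is that the generators of extreme degree split off as honest sub- or quotient DG-modules. The generators of degree $i$ span a sub-DG-module isomorphic to $\Sigma^{-i}A^{(\beta_i)}$ (their differential must vanish, since it would otherwise land among generators of degree $<i$), with quotient $Q$ whose generators all sit in degrees $\ge i+1$, so $\inf\Ho^*Q\ge i+1$. Dually, the generators of degree $<u$ span a sub-DG-module $M'$ with quotient $L/M'\cong\Sigma^{-u}A^{(\beta_u)}$, where $M'$ has top generator degree $u'<u$. Both $Q$ and $M'$ are again compact but have strictly fewer generators. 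Tensoring the two semi-split short exact sequences with $X$ yields triangles $\Sigma^{-i}X^{(\beta_i)}\to M\otimes_A^LX\to Q\otimes_A^LX\to$ and $M'\otimes_A^LX\to M\otimes_A^LX\to\Sigma^{-u}X^{(\beta_u)}\to$.

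From here I would argue by induction on the number of generators. For (1) I use the first triangle: by induction $\inf\Ho^*(Q\otimes_A^LX)=\inf\Ho^*Q+\inf\Ho^*X\ge (i+1)+\inf\Ho^*X$, so the long exact sequence isolates $\Ho^{\,i+\inf\Ho^*X}(M\otimes_A^LX)\cong\Ho^{\inf\Ho^*X}(X)^{(\beta_i)}\neq 0$ and kills everything in lower degree, giving the equality. For (2) I use the second triangle, carrying along the inductively established upper bound $\sup\Ho^*(M'\otimes_A^LX)\le u'+\sup\Ho^*X<u+\sup\Ho^*X$; the long exact sequence around degree $u+\sup\Ho^*X$ then collapses to $\Ho^{\,u+\sup\Ho^*X}(M\otimes_A^LX)\cong\Ho^{\sup\Ho^*X}(X)^{(\beta_u)}\neq 0$. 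Hence $\sup\Ho^*(M\otimes_A^LX)\ge u+\sup\Ho^*X\ge i+\sup\Ho^*X$, which is (2) (and in fact gives the sharper equality $\sup\Ho^*(M\otimes_A^LX)=u+\sup\Ho^*X$). Statement (3) is then purely formal, $\amp(M\otimes_A^LX)\ge(i+\sup\Ho^*X)-(i+\inf\Ho^*X)=\amp X$, and the final assertion follows by taking $X=A_A\in\D(A^\op)$, for which $M\otimes_A^LA\cong M$ yields $\amp M\ge\amp A$.

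The content beyond the support bounds is the non-vanishing at the two extreme cohomological degrees, and I expect the supremum in (2) to be the main obstacle. The delicate point is that peeling off the top generators forces one to propagate an \emph{upper} bound on $\sup\Ho^*(M'\otimes_A^LX)$ through the induction in order to clear degrees $\ge u+\sup\Ho^*X$ in the long exact sequence; phrased via the generator-degree filtration, this is the statement that the extremal corner (top generator degree of $L$, top cohomology of $X$) neither supports nor receives a differential, which holds precisely because $d(L)$ strictly lowers the generator degree while it cannot push cohomology of $X$ above its supremum. Verifying that the extreme generators really do split off as the claimed sub/quotient DG-modules (so that the tensored sequences stay exact) is where minimality and positivity of $A$ enter, and is the hypothesis one must handle with care.
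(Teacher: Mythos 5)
Your overall strategy coincides with the paper's: replace $A$ by a finite dimensional model, take a minimal semi-free resolution $L\to M$ with finitely many generators, peel off the generators of extreme degree by semi-split short exact sequences, tensor with $X$, and extract the two non-vanishing statements from the long exact cohomology sequence. Your structural claims are correct: since $A$ is positively graded with $A^0=k$ and $A^1=0$, minimality $d(L)\subseteq L\cdot A^{\geq 1}$ forces the differential of a generator to involve only generators of strictly smaller degree, so the bottom generators span a sub-DG-module $\Sigma^{-i}A^{(\beta_i)}$ and the top generators a quotient DG-module $\Sigma^{-u}A^{(\beta_u)}$. The paper obtains the top sequence directly from Lemma~\ref{lem:sfr}(3) and proves (1) by a direct element computation (a product of bottom cocycles is a nonzero class in the bottom degree) rather than by your bottom sequence, but these are cosmetic differences. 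Note also that your induction is superfluous: the vanishing you need on either side of the extreme degree follows from the \emph{graded} support of $Q\otimes_AX$ and $M'\otimes_AX$ (their underlying graded modules live in degrees $\geq i+1+\inf X$, respectively $\leq u'+\sup X$), not from any inductively established cohomological bound, so both (1) and (2) are one-step arguments --- which is exactly how the paper argues (2).

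There is, however, one genuine gap. Every support bound you invoke --- ``$\Ho^*(M\otimes_A^LX)$ is supported in degrees between $i+\inf\Ho^*X$ and $u+\sup\Ho^*X$'', and the vanishing of $\Ho^{u+\sup\Ho^*X}(M'\otimes_AX)$ and $\Ho^{u+\sup\Ho^*X+1}(M'\otimes_AX)$ --- presupposes that the chosen cochain representative of $X$ satisfies $\inf X=\inf\Ho^*X$ and $\sup X=\sup\Ho^*X$. For an arbitrary object of $\D(A^\op)$ with $\dim\Ho^*X<\infty$ this is false at the chain level: a representative may have components far outside $[\inf\Ho^*X,\sup\Ho^*X]$, and then $\coprod_j\Sigma^{-j}(X^\natural)^{(\beta_j)}$ yields no bound in terms of $\sup\Ho^*X$ whatsoever, so the long exact sequence argument collapses. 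Your closing remark that the differential ``cannot push cohomology of $X$ above its supremum'' is precisely where chain level and cohomology level get conflated. The paper removes this obstacle by an explicit preliminary reduction: without loss of generality $X$ may be replaced by a minimal semi-free resolution over $A^\op$ (Lemma~\ref{lem:sfr}) followed by right truncation, exactly as in the construction of the finite dimensional model in Section~3.1, after which $\inf X=\inf\Ho^*X$ and $\sup X=\sup\Ho^*X$ hold on the nose. With that one reduction added, your proof goes through.
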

\begin{proof}
By Lemma~\ref{lem:equ}(2) we can replace $A$ with a finite dimensional model. Furthermore, without loss of generality we may assume that $X\in\D(A^\op)$ satisfies $\inf X=\inf\Ho^*X$ and $\sup X=\sup\Ho^*X$. This follows from the construction of a minimal semi-free resolution of $X$ as described in Lemma~\ref{lem:sfr} and right truncation as done with the free model of the simply connected differential graded algebra of finite type in the beginning of this section. Since $M$ is compact there exists a semi-free resolution $L\to M$ as in Lemma~\ref{lem:sfr} with semi-free filtration of $L$ that terminates after finitely many steps.\\
(1) Of course we have
$$\inf\Ho^*(M\otimes_A^L X)\geq\inf (M\otimes_A^L X)=\inf (L\otimes_A X)=\inf L+\inf X=\inf\Ho^*M+\inf\Ho^*X.$$
On the other hand two non zero  elements
$$\bar m\in\Ho^{\inf\Ho^*M}L\quad\text{and}\quad\bar x\in\Ho^{\inf\Ho^*X}X$$
give a non zero element 
$$\overline{m\otimes_A x}\in\Ho^{\inf\Ho^*M+\inf\Ho^*X}(L\otimes_A X)=\Ho^{\inf\Ho^*M+\inf\Ho^*X}(M\otimes_A^L X).$$
Hence it also holds $\inf\Ho^*(M\otimes_A^L X)\leq\inf\Ho^*M+\inf\Ho^*X$.
\\
%Now we get $\inf (M\otimes_A^L X)=\inf (L\otimes_A X)=\inf L+\inf X=\inf M+\inf X$ and also $\inf\Ho^*(M\otimes_A^L X)=\inf\Ho^*M+\inf\Ho^*X$.\\
(2) We take from Lemma~\ref{lem:sfr}~(iii) the semi-split short exact sequence
$$0\to P\to L\to\Sigma^{-w}A^{(\a)}\to 0$$
with $\a\neq 0$ finite, $w\geq\inf\Ho^*M$, and $P^\natural\cong\coprod_{j\leq w}\Sigma^{-j}(A^\natural)^{(\epsilon_j)}$.
Then we apply $-\otimes_A X$ and get again a semi-split short exact sequence
$$0\to P\otimes_A X\to L\otimes_A X\to \Sigma^{-w}A^{(\a)}\otimes_A X\to 0.$$
This gives a long exact cohomology sequence
$$\cdots\to\Ho^{w+\sup X}(L\otimes_A X)\to\Ho^{w+\sup X}(\Sigma^{-w} A^{(\a)}\otimes_A X)\to\Ho^{w+\sup X+1}(P\otimes_A X)\to\cdots$$
We have
$(P\otimes_A X)^\natural\cong\coprod_{j\leq w}\Sigma^{-j}(A^\natural)^{(\epsilon_j)}\otimes_A X^\natural\cong\coprod_{j\leq w}\Sigma^{-j} (X^\natural)^{(\epsilon_j)}$.
Hence 
$$\Ho^{w+\sup\Ho^*X+1}(P\otimes_A X)=(P\otimes_A X)^{w+\sup\Ho^*X+1}=0.$$
It follows that $\Ho^{w+\sup\Ho^*X}(L\otimes_A X)$ maps onto
$$\Ho^{w+\sup\Ho^*X}(\Sigma^{-w} A^{(\a)}\otimes_A X)\cong \Ho^{\sup\Ho^*X}(X^{(\a)})\neq 0.$$
Therefore, $\Ho^{w+\sup\Ho^*X}(M\otimes_A^L X)=\Ho^{w+\sup\Ho^*X}(L\otimes_A X)\neq 0$ and we finally get $$\sup\Ho^*(M\otimes_A^L X)\geq w+\sup\Ho^*X\geq\inf\Ho^*M+\sup\Ho^*X.$$
(3) This follows directly by combining (1) and (2).
\end{proof}

The following corollary is now an immediate consequence of the previous proposition.

\begin{cor}\label{cor:ampl}
Let $A$ be an augmented simply connected differential graded algebra of finite type with $\dim\Ho^*A\geq 2$. Then $k_A\in\D(A)$ is not compact.
\end{cor}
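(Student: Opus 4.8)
The plan is to argue by contradiction, invoking only the final clause $\amp M\geq\amp A$ of Proposition~\ref{prop:ampl}. Suppose that $k_A$ were compact, so that $0\neq k_A\in\D^c(A)$. Applying the ``in particular'' statement of the proposition with $M=k_A$ would then force
$$\amp k_A\geq\amp A.$$
The entire argument thus reduces to the observation that, under the stated hypotheses, the right-hand side is strictly larger than the left-hand side.

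For the left-hand side, note that $k_A$ is concentrated in degree zero, so $\Ho^*k_A$ equals $k$ in degree $0$ and vanishes in all other degrees; hence $\sup\Ho^*k_A=\inf\Ho^*k_A=0$ and $\amp k_A=0$. For the right-hand side, recall that $A$ is simply connected of finite type, so $A$ is positively graded with $\Ho^0A\cong k$ and $\Ho^1A=0$; in particular $\inf\Ho^*A=0$. The assumption $\dim\Ho^*A\geq 2$ means that $\Ho^nA\neq 0$ for some degree $n$, and since $\Ho^0A\cong k$ already contributes one dimension while $\Ho^1A=0$, such a degree must satisfy $n\geq 2$. Consequently $\sup\Ho^*A\geq 2$, so that $\amp A\geq 2>0=\amp k_A$, contradicting the displayed inequality. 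Therefore $k_A$ is not compact.

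Since the substantive work has already been carried out in Proposition~\ref{prop:ampl}, I do not expect any genuine obstacle at this stage: the corollary is immediate once the two elementary amplitude computations above are recorded. The only point requiring a moment's care is deducing $n\geq 2$ rather than merely $n\geq 1$, for which one uses the simple connectivity assumption $\Ho^1A=0$ together with $\Ho^0A\cong k$.
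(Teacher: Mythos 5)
Your proof is correct and is exactly the paper's argument: the paper states the corollary as an immediate consequence of Proposition~\ref{prop:ampl}, and your contradiction via $\amp k_A=0<2\leq\amp A$ (using $\Ho^0A\cong k$, $\Ho^1A=0$, and $\dim\Ho^*A\geq 2$ to force $\sup\Ho^*A\geq 2$) is precisely the deduction left to the reader.
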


To prove his result about the shape of the Auslander-Reiten components J\o rgensen needs that $k_A$ is not compact if $\dim\Ho^*A\geq 2$. % He shows that this is true for the singular cochain differential graded algebra $C^*(X;k)$ of a topological space $X$ provided the characteristic of the ground field $k$ is zero. 
By our result this assertion is now generally fulfilled for augmented simply connected differential graded algebra of finite type and in particular independent of the characteristic of the ground field.

\newpage

\section{The number of Auslander-Reiten components}

Let $A$ be a simply connected Gorenstein differential graded algebra of finite type. By Theorem~\ref{thm:poincare} the category $\D^c(A)$ has Auslander-Reiten triangles. The following theorem states that the situation in Example~\ref{exm:sphere}, namely that the Auslander-Reiten quiver of a sphere consists of Gorenstein-dimension minus $1$ components is quite special. To follow the proof of the theorem we suggest the reader for the first reading to look at some example (cf.\ Example~\ref{exm:S2S2}).

\begin{thm}\label{thm:noc} Let $A$ be a simply connected Gorenstein differential graded algebra of finite type. Then the following hold.
\begin{enumerate}
\item The Auslander-Reiten quiver of $A$ has finitely many components if and only if $\dim \Ho^*A=2$. In this case the number of components equals $\sup\{i\mid\Ho^iA\neq 0\}-1$.% \Ho^*A$.
\item If $\dim \Ho^eA\geq 2$ for some $e$, then there is an $n$-parameter family of Auslander-Reiten components for each $n\in\mathbb N$. In fact there are objects, each lying in a different component, that can be parametrised by $\mathbb P^1(k)^n$.
\end{enumerate}
\end{thm}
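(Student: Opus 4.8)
The plan is to reduce everything to an explicit construction of indecomposable compact objects together with a clean criterion for when two of them lie in different Auslander-Reiten components. First I would replace $A$ by a finite dimensional model with $A^0=k$, $A^1=0$ and $\sup A=d:=\sup\Ho^*A$ (Remark~\ref{rem:model}), which changes neither $\D^c(A)$ nor its Auslander-Reiten quiver. Since $A$ is Gorenstein, Theorem~\ref{thm:poincare} says $\D^c(A)$ is Calabi-Yau of dimension $d$, so by Remark~\ref{rem:CY} the Auslander-Reiten translate is just the shift $\tau=\Sigma^{d-1}$, and by Theorem~\ref{thm:ZA} every component is of the form $\mathbb Z A_\infty$. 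I would record two structural consequences. First, the amplitude inequality (Proposition~\ref{prop:ampl}) gives $\amp M\geq\amp A=d$ for every nonzero compact $M$, so complexity grows with quasi-length. Second, each $\mathbb Z A_\infty$ component has a single $\tau$-orbit of quasi-simple objects, any two quasi-simples of one component differ by a power of $\tau=\Sigma^{d-1}$, and ---since the thick closure is stable under shifts and the higher quasi-length objects are iterated cones among the $\Sigma^{(d-1)m}$-translates of a quasi-simple $S$--- the whole component lies in $\langle S\rangle_{\mathrm{thick}}$. Hence two quasi-simples $S,T$ lie in the same component only if $T\cong\Sigma^{(d-1)m}S$ for some $m\in\mathbb Z$; this is the separation principle I will use.

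Second, I would dispose of the two extreme cohomology dimensions of part (1). If $\dim\Ho^*A=1$ then $\Ho^*A=k$, so $A$ is weakly equivalent to $k$ and $\D^c(A)$ is semisimple; its indecomposables are the $\Sigma^n k$, each forming its own component, giving countably many one-vertex components. If $\dim\Ho^*A=2$ then simple connectivity together with Poincar\'e duality (condition (3) of Theorem~\ref{thm:poincare}) forces $\Ho^*A\cong k[x]/(x^2)$ with $\deg x=d$; a degree count shows that every higher $A_\infty$-product on this cohomology vanishes, so $A$ is formal and weakly equivalent to the cochain algebra of $S^d$. Example~\ref{exm:sphere} then yields exactly $d-1$ components, establishing the ``finitely many'' half of (1).

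Third comes the construction that drives both the ``only if'' half of (1) and all of (2). Starting from $k_A$ (equivalently from its minimal semi-free resolution, Lemma~\ref{lem:sfr}) I would build indecomposable compact objects $M_\lambda$ by iterated mapping cones along cohomology classes, normalising the construction so that all $M_\lambda$ share a fixed value of $\inf\Ho^*$. Proposition~\ref{prop:local} is then used to compute $\End_{\D(A)}(M_\lambda)$ inductively: it is local, so each $M_\lambda$ is indecomposable and lies in a single component, and the computation records $\lambda$ rigidly, so that $M_\lambda\cong M_\mu$ forces $\lambda=\mu$. When $\dim\Ho^eA\geq2$ I would fix a two-dimensional subspace $U\subseteq\Ho^eA$; each point of $\mathbb P^1(k)$ selects a line in $U$ to glue along, and an $n$-fold iteration using $n$ independent such choices produces a family $\{M_\lambda\mid\lambda\in\mathbb P^1(k)^n\}$. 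When instead every graded piece of $\Ho^*A$ is at most one-dimensional but $\dim\Ho^*A\geq3$ (so at least two positive degrees carry cohomology), the same scheme, gluing along classes in distinct degrees, yields a countable discrete family. In each case I would also verify that the $M_\lambda$ are quasi-simple.

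Finally I would conclude by separation. Since the $M_\lambda$ have a common $\inf\Ho^*$, the shifted object $\Sigma^{(d-1)m}M_\mu$ has $\inf\Ho^*$ differing by $(d-1)m$, so $M_\lambda\cong\Sigma^{(d-1)m}M_\mu$ forces $m=0$ and then $\lambda=\mu$ by the endomorphism computation. By the separation principle of the first step, distinct parameters therefore give distinct components: this produces the $\mathbb P^1(k)^n$-family of components in part (2) and, in the one-dimensional-pieces case, infinitely many components, completing the ``only if'' direction of part (1) (the count $\sup\{i\mid\Ho^iA\neq0\}-1$ for $\dim\Ho^*A=2$ having already been settled via the sphere). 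The main obstacle is precisely the middle step: choosing the mapping cones so that the minimal semi-free resolutions stay under control, proving by induction (Proposition~\ref{prop:local}) that the endomorphism ring remains local while faithfully encoding $\lambda$, and checking quasi-simplicity so that the clean separation criterion applies.
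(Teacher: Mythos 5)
Your overall architecture (finite dimensional model, $\tau=\Sigma^{d-1}$ and $\mathbb Z A_\infty$ components via Theorem~\ref{thm:ZA}, the two easy cases $\dim\Ho^*A\in\{1,2\}$, iterated mapping cones along cohomology classes with Proposition~\ref{prop:local} supplying indecomposability) matches the paper. But the argument breaks at the separation step, in two places. First, your separation principle applies only to \emph{quasi-simple} objects: in a $\mathbb Z A_\infty$ component the quasi-simples do form a single $\tau$-orbit, but nothing you propose shows that the constructed objects $M_\lambda$ sit at the bottom of their components, and you only flag this verification without giving a method. It is genuinely problematic: a priori $M_\lambda$ could be, say, a quasi-length-two object in the component of some $\Sigma^jA$, and ruling this out for the $n$-step cones requires controlling all possible factorisations of their invariants. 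The paper avoids quasi-simplicity altogether by using the function $f=\dim\Ho^*\RHom_A(-,k)$ of Section~3.3, which is additive on Auslander-Reiten triangles and satisfies $f\circ\Sigma=f$; additivity forces $f(M)=i\cdot f(S)$ for an object $M$ of quasi-length $i$ above a quasi-simple $S$, so any two objects of the \emph{same} component with the \emph{same} $f$-value automatically lie in the same row and hence are $\Sigma^{(d-1)m}$-shifts of each other. Your proposal never mentions $f$, and without it (or a genuine proof of quasi-simplicity) the conclusion that distinct parameters give distinct components does not follow.

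Second, in part (2) you assert that the endomorphism computation of Proposition~\ref{prop:local} ``records $\lambda$ rigidly, so that $M_\lambda\cong M_\mu$ forces $\lambda=\mu$.'' It does not: that proposition yields $\End_{\D(A)}(C_{(\lambda)})\cong k\ltimes\Ho^{e-1}A$ as a ring for \emph{every} $\lambda$, so the endomorphism rings of the $C_{(\lambda)}$ are pairwise isomorphic and cannot distinguish the objects. The paper needs a separate, dedicated argument: for $\overline{\lambda}\neq\overline{\lambda'}\in\mathbb P^1(k)$ one shows that the induced map $\Hom_{\D(A)}(A,C_{(\lambda')})\to\Hom_{\D(A)}(\Sigma^{-e}A,C_{(\lambda')})$ is injective (this is exactly where the linear independence of $\bar\zeta_1,\bar\zeta_2$ enters), whence $\Hom_{\D(A)}(C_{(\lambda)},C_{(\lambda')})\to\Hom_{\D(A)}(A,C_{(\lambda')})$ is the zero map; but an isomorphism $C_{(\lambda)}\to C_{(\lambda')}$ precomposed with the canonical map $A\to C_{(\lambda)}$ would give a nonzero element in its image, a contradiction. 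You need to supply this Hom-vanishing argument (and its iterated version for the $n$-parameter families); as written, both the injectivity of the $\mathbb P^1(k)$-parametrisation and the separation into distinct components are unproved.
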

\begin{proof}
Let $d:=\sup\Ho^*A$. If $\dim\Ho^*A=1$, then $A$ is quasi-isomorphic to the ground field $k$ viewed as differential graded algebra concentrated in degree zero, and the Auslander-Reiten quiver clearly consists of countably many components each of them containing just a single object. If $\dim\Ho^*A=2$, then $A$ is quasi-isomorphic to the differential graded algebra $k[x]/{(x^2)}$ with $\deg x=d$. These are the differential graded algebras weakly equivalent to $C^*(S^d;k)$ for some $d\geq 2$, for those J\o rgensen has computed that the Auslander-Reiten quiver consists of $d-1$ components of the form $\mathbb ZA_\infty$ (see Example~\ref{exm:sphere}). If $\dim \Ho^*A\geq 3$, then we will get indecomposable modules that lie in different components via iterated mapping cone constructions. Under the additional assumption in (2) we are in this way even able to construct $n$-parameter families of modules in different components. Before we continue with the proof, we will make some preliminary observations and explain the skeletal structure of the mapping cone constructions.

\subsection{Preparations for the proof}

In the following we will always assume that $\dim \Ho^*A\geq 2$. As Theorem~\ref{thm:ZA} tells us, the components of the Auslander-Reiten quiver of $A$ are of the form $\mathbb Z A_\infty$ and the Auslander-Reiten translate is $\t=\Sigma^{d-1}$ hence the components look as follows:
$$\xymatrix@=1pt{
&&&& \vdots \\
&\Sigma^{d-1}M_3 \ar[rdd] \ar@{.}[rr] & & M_3 \ar[rdd] \ar@{.}[rr] & & \Sigma^{-(d-1)}M_3 \ar[rdd] \ar@{.}[rr] & & \Sigma^{-2(d-1)}M_3 \\
\\
\cdots & \ar@{.}[r] & \Sigma^{d-1}M_2 \ar[rdd] \ar[ruu] \ar@{.}[rr] & & M_2 \ar[rdd] \ar[ruu] \ar@{.}[rr] & & \Sigma^{-(d-1)}M_2 \ar[rdd] \ar[ruu] \ar@{.}[r] & & \cdots \\
\\
&\Sigma^{2(d-1)}M_1 \ar[ruu] \ar@{.}[rr] &  & \Sigma^{d-1}M_1 \ar[ruu] \ar@{.}[rr] & & M_1 \ar[ruu] \ar@{.}[rr] & & \Sigma^{-(d-1)}M_1 \\
}$$
Recall from Section~3.3 the function $f:=\dim\Ho^*\RHom_A(-,k)$. Since $f$ is additive and $f\circ\Sigma=f$, it holds $f(\Sigma^{n(d-1)} M_i)=i\cdot f(M_1)$ for $i\in\mathbb N,~n\in\mathbb Z$. Hence objects in the same component with the same value under $f$ are just iterated $(d-1)$-shifts of each other. If we can construct indecomposable non-isomorphic differential graded $A$-modules that have the same value under $f$ and are not iterated $(d-1)$-shifts of each other, then they have to lie in different components of the Auslander-Reiten quiver. In particular, the number of components of the Auslander-Reiten quiver of $\D^c(A)$ is at least $d-1$ since for an indecomposable compact differential graded $A$-module $M$ all the objects $\Sigma^i M,~i\in\{0,1,2,\dots,d-2\}$ have to lie in different components.

\subsubsection{The basic construction}
We start to describe the skeletal structure of the mapping cone constructions and state some properties of the objects that we get in this way. We will specialise to the specific constructions later.
We start with $C_0:=A$ and some $e_1\in\mathbb N$ with $\Ho^{e_1} A\neq 0$. We define $\p\colon\Sigma^{-e_1}A\to A,~1\mapsto\zeta$ where $\zeta\in A^{e_1}$ with $\bar\zeta\neq 0\in\Ho^{e_1}A$. Let $C_1=C_0(e_1)=A(e_1)$ be by definition the mapping-cone of $\p$.
$$\Sigma^{-e_1}A\overset \p\to A\to C_1\to\Sigma^{-e_1+1}A$$
If there exists some $e_2\in\mathbb N$ with $\Ho^{e_2}C_1\neq 0$, then we could continue the mapping-cone construction starting with $C_1$ and so on, i.e.\
$$\xymatrix@=6pt{
A \ar[rr] && C_1 \ar[rr]\ar[ldd]^+ && C_2 \ar[rrr]\ar[ldd]^+ &&& \cdots \ar[rrr] &&& C_{n-1} \ar[rr] && C_n \ar[ldd]^+\ar[rrr] &&&\cdots \\
\\
& \Sigma^{-e_1}A \ar[luu] && \Sigma^{-e_2}A \ar[luu] &&&& \cdots &&&& \Sigma^{-e_{n}}A\ar[luu] &&&& \dots
}$$
%$$\Sigma^{-e_n}A\to C_{n-1}\to C_n\to,$$
where we denote by $C_n=C_{n-1}(e_{n})$ an object that occurs in an n-step mapping cone construction and by $e_{n}$ the number of shifts of $A$ that belong to the construction in the n-th step.
%, i.e.
%$$\Sigma^{-e_n}A\to C_{n-1}\to C_n\to$$
\begin{rem}
For all $n\in\mathbb N_0$ we have:
\begin{enumerate}
\item[(a)] $C_n$ is compact.
\item[(b)] $C_n$ is minimal semi-free.
\item[(c)] $C_n^\natural\cong A^\natural\oplus\overset n{\underset{i=1}\bigoplus}\Sigma^{-e_i+1}A^\natural$. In particular, a morphism of differential graded $A$-modules starting in $C_n$ is uniquely determined by the images of $1_A$ and $1_{\Sigma^{-e_i+1}A}$, for $i=1,\dots,n$.
\item[(d)] $f(C_n)=n+1.$
\end{enumerate}
\end{rem}
\begin{proof}
(a) This is clear since the compact objects form a triangulated subcategory and we have started with the compact objects $A$ and $\Sigma^{-e_1}A$ and in every construction step $i=1,\dots,n$ we just take the mapping cone of a morphism between the compact objects $\Sigma^{-e_i}A$ and $C_{i-1}$.\\
(b) We have the filtration $A=C_0\subset C_1\subset\dots\subset C_{n-1}\subset C_n$ with quotients $C_i/C_{i-1}\cong \Sigma^{-e_i+1}A,~i=1,\dots,n$. Hence $C_n$ is semi-free. By construction $d(C_n)\subseteq A^{\geq 1}C_n$, i.e.\ $C_n$ is minimal.\\
(d) We calculate
\begin{multline*}
f(C_n)=\dim\Ho^*\RHom_A(C_n,k)=\dim\H om_{A^\natural}(C_n,k)\\
=\dim\H om_{A^\natural}(A^\natural\oplus\bigoplus_{i=1}^n\Sigma^{-e_i+1}A^\natural,k^\natural)=\dim(k\oplus\bigoplus_{i=1}^n\Sigma^{e_i-1}k)=n+1.
\end{multline*}
% \begin{align*}
% f(C_n)=\dim\Ho^*\RHom_A(C_n,k)=\dim\H om_{A^\natural}(C_n,k)\\
% =\dim\H om_{A^\natural}(A^\natural\oplus\bigoplus_{i=1}^n\Sigma^{-e_i+1}A^\natural,k^\natural)=\dim(k\oplus\bigoplus_{i=1}^n\Sigma^{e_i-1}k)=n+1.
%     \end{align*}
\end{proof}

We will also need the following
\begin{lem}\label{lem:Ho}
\begin{enumerate}
\item[(i)] $\Ho^jC_n\cong\Ho^jC_{n-1}$ for all $j<e_n$.
\item[(ii)] $\dim\Ho^{e_n}C_n=\dim\Ho^{e_n}C_{n-1}-1$.
\end{enumerate}
\end{lem}
\begin{proof}
(i) We consider the long exact cohomology sequence
$$\xymatrix@=5mm{\cdots \ar[r] & \Ho^{-e_n+j}A \ar[r] & \Ho^{j}C_{n-1} \ar[r] & \Ho^{j}C_n \ar[r] & \Ho^{-e_n+j+1}A \ar[r] & \Ho^{j+1}C_{n-1} \ar[r] & \cdots
}.$$
We clearly have $\Ho^{-e_n+j}A=0$. For $j<e_n-1$ it also holds $\Ho^{-e_n+j+1}A=0$, and hence we get $\Ho^jC_n\cong\Ho^jC_{n-1}$. In the case $j=e_n-1$ the long exact sequence looks as follows.
$$\xymatrix@=5mm{\cdots \ar[r] & \Ho^{-1}A \ar[r]\ar@{=}[d] & \Ho^{e_n-1}C_{n-1} \ar[r] & \Ho^{e_n-1}C_n \ar[r] & \Ho^{0}A \ar@{^{(}->}[r]^<<<<{\neq 0}\ar@{=}[d] & \Ho^{e_n}C_{n-1} \ar[r] & \cdots \\
 & 0 & &  & k
}$$
Since the map $\Ho^0A\to\Ho^{e_n}C_{n-1}$ is non-zero it has to be injective, and so we also get $\Ho^{e_n-1}C_n\cong\Ho^{e_n-1}C_{n-1}$.

(ii) Again we consider the long exact cohomology sequence
$$\xymatrix@=5mm{\cdots \ar[rr]\ar[rd] && \Ho^{0}A \ar@{^{(}->}[r]^<<<<{\neq 0}\ar@{=}[d] & \Ho^{e_n}C_{n-1} \ar[r] & \Ho^{e_n}C_{n} \ar[r] & \Ho^{1}A \ar[r]\ar@{=}[d] & \cdots \\
& 0 \ar[ru] & k & & & 0 &
}.$$
Hence we get $\dim\Ho^{e_n}C_n=\dim\Ho^{e_n}C_{n-1}-1$.
\end{proof}

% \begin{proof}
% (i) This is clear by construction.
% 
% (ii) This follows directly from (i).
% % 
% % (iii) By (i) the assertion is clear for $e_n>\sup C_{n-1}-d+2$. For $e_n=\sup C_{n-1}-d+2$ we have do some work:
% % Given $f\colon\Sigma^{-e_n}A\to C_{n-1},~1\mapsto\zeta$ we have to show that the restriction of $f$ to $A^{d-2}\to C_{n-1}^{\sup C_{n-1}}=k$ is non-zero. Then $\Ho^{\sup C_n-1}C_n=C_{n-1}^{\sup C_{n-1}}/C_{n-1}^{\sup C_{n-1}}=0$ follows. First we remark that $\C A(\Sigma^{e_n}A,C_{n-1})\cong\C A(A,\Sigma^2 A)$. So we may assume $f\colon A\to\Sigma^2 A,~1\mapsto\zeta\in A^2$ and want to show that the restriction of $f$ to $A^{d-2}\to A^d=k$ is non zero. We consider the linear form $\zeta^*\colon A^2\to k$ in $\Hom_k(A^2,k)=DA^{-2}$. The isomorphism $\p\colon\Sigma^d A\to DA$ restricts to an isomorphism $A^{d-2}\to DA^{-2}$. Hence there exists an $a\in A^{d-2}$ with $\p(a)=\zeta^*$. So $\zeta^*=\p(a)=a\cdot\p(1)=\p(1)\circ(a\cdot-)$, $1=\zeta^*(\zeta)=\p(1)(a\cdot\zeta)$ and $a\cdot\zeta\in A^d=k$ has to be non-zero. Therefore, $f(a)=a\cdot f(1)=a\cdot\zeta\neq 0$ and $\Ho^{\sup C_n-1}C_n=0$ follows.
% \end{proof}

\subsubsection{Indecomposability}
To get indecomposable objects we have to do the constructions more carefully: 

From now on we assume that the differential graded algebra has been replaced by a finite dimensional model that we also denote by $A$. The isomorphism $\Sigma^{d}A\to DA$ in $\D(A)$ gives $\Ho^{d-1}A=0$ and $\Ho^dA\cong k$, where  $d=\sup A$. We will choose the $e_i$ such that $\sup C_{i-1}-d+2\leq e_i\leq\sup C_{i-1}$. Since $\sup C_{i-1}=e_{i-1}+d-1$ this implies $e_{i-1}<e_{i}$ for all $i$.

\begin{rem}\label{rem:Ho1}For all $n\in\mathbb N_0$ we have:
\begin{enumerate}
\item[(i)] $C_n^{\geq\sup C_{n-1}+1}%=C_{n-1}(e_n)^{\geq \sup C_{n-1}+1}
=\Sigma^{-e_n+1}(A^{\geq \sup C_{n-1}-e_n+2})$
\item[(ii)]$\Ho^{\sup C_n}C_n%=C_n^{\sup C_n}
\cong k$
%\item [(iii)] $\Ho^{\sup C_n-1}C_n=0$.
\end{enumerate}
\end{rem}

We will call the construction of $C_n=C_{n-1}(e_n)$ with $e_n=\sup C_{n-1}$ a {\em construction step of the first kind}. If $\dim\Ho^*A\geq 3$, then there exists $2\leq e\leq d-2$ having the property $\Ho^eA\neq 0$.
For the first construction step we want to consider the two possibilities
$$e_1=\sup C_0=d~\text{  and }~e_1=e.$$
In the $n$-th construction step we always want to consider the possibility that $e_n=\sup C_{n-1}$, and in case that the $(n-1)$-th construction step has been a construction step of the first kind (i.e.\ $e_{n-1}=\sup C_{n-2}$) also the possibility $e_n=\sup C_{n-1}-d+e$. Here we call the construction $C_n=C_{n-1}(e_n)$ with $e_n=\sup C_{n-1}-d+e$ a {\em construction step of the second kind}. So we allow construction steps of the first and second kind, but after a construction step of the second kind, in the next step we will just allow a construction step of the first kind. This ensures that $e_{n}>\sup C_{n-2}$ and with Remark~\ref{rem:Ho1}(i) it follows $C_{n-1}^{\sup C_{n-1}-d+e}=A^e$ and $\Ho^{\sup C_{n-1}-d+e}C_{n-1}=\Ho^eA\neq 0$. So we will always be able to proceed with our construction. If we define $$e_n^A:=e_n-(\sup C_{n-1}-d)=e_n-e_{n-1}+1,$$
then we have $C_{n-1}^{e_n}=A^{e_n^A}$, and further $e_n^A=d$ if the $n$-th construction step is a step of the first kind and $e_n^A=e$ if the $n$-th construction step is a step of the second kind.

Before we can prove the indecomposability of the objects $C_n$ that are constructed in the described manner, we make the following preliminary observations:
\begin{multline*}\tag{1}
C_n^{e_i}=C_{i+1}^{e_i}=((\Sigma^{-e_{i+1}+1}A)\oplus(\Sigma^{-e_i+1}A)\oplus(\Sigma^{-e_{i-1}+1}A))^{e_i}\\
=A^{e_i-e_{i+1}+1}\oplus A^1\oplus A^{e_i-e_{i-1}+1}=A^{-e_{i+1}^A+2}\oplus A^{e_i^A}
\end{multline*}
\vspace{-1cm}
\begin{multline*}\tag{2}
C_n^{e_i-1}=C_{i}^{e_i-1}=((\Sigma^{-e_i+1}A)\oplus(\Sigma^{-e_{i-1}+1}A)\oplus(\Sigma^{-e_{i-2}+1}A))^{e_i-1}\\
=A^0\oplus A^{e_i-e_{i-1}}\oplus A^{e_i-e_{i-2}}=k\oplus A^{e_i^A-1}\oplus A^{e_i^A+e_{i-1}^A-2}=k\oplus A^{e_i^A-1}\oplus A^{d+e-2}
\end{multline*}
\vspace{-1cm}
\begin{multline*}\tag{3}
C_n^{e_i-2}=C_i^{e_i-2}=((\Sigma^{-e_{i-1}+1}A)\oplus(\Sigma^{-e_{i-2}+1}A)\oplus(\Sigma^{-e_{i-3}+1}A))^{e_i-2}\\
=A^{e_i-e_{i-1}-1}\oplus A^{e_i-e_{i-2}-1}\oplus A^{e_i-e_{i-3}-1}=A^{e_i^A-2}\oplus A^{e_i^A+e_{i-1}^A-3}\oplus A^{e_i^A+e_{i-1}^A+e_{i-2}^A-4}\\
=A^{e_i^A-2}\oplus A^{d+e-3}\oplus A^{d+e+e_i^A-4}
\end{multline*}
\vspace{-1cm}
\begin{multline*}\tag{4}%\shoveleft
{C_{n-1}^{e_n-1}=(\Sigma^{-e_{n-1}+1}A)^{e_n-1}=A^{e_n-e_{n-1}}=A^{e_n^A-1}}
\end{multline*}
% \begin{enumerate}
% \item $C_n^{e_i}=C_{i+1}^{e_i}=((\Sigma^{-e_{i+1}+1}A)\oplus(\Sigma^{-e_i+1}A)\oplus(\Sigma^{-e_{i-1}+1}A))^{e_i}\\
% =A^{e_i-e_{i+1}+1}\oplus A^1\oplus A^{e_i-e_{i-1}+1}=A^{-e_{i+1}^A+2}\oplus A^{e_i^A}$
% \item $C_n^{e_i-1}=C_{i}^{e_i-1}=((\Sigma^{-e_i+1}A)\oplus(\Sigma^{-e_{i-1}+1}A)\oplus(\Sigma^{-e_{i-2}+1}A))^{e_i-1}\\
% =A^0\oplus A^{e_i-e_{i-1}}\oplus A^{e_i-e_{i-2}}=k\oplus A^{e_i^A-1}\oplus A^{e_i^A+e_{i-1}^A-2}=k\oplus A^{e_i^A-1}\oplus A^{d+e-2}$
% \item $C_n^{e_i-2}=C_i^{e_i-2}=((\Sigma^{-e_{i-1}+1}A)\oplus(\Sigma^{-e_{i-2}+1}A)\oplus(\Sigma^{-e_{i-3}+1}A))^{e_i-2}\\
% =A^{e_i-e_{i-1}-1}\oplus A^{e_i-e_{i-2}-1}\oplus A^{e_i-e_{i-3}-1}=A^{e_i^A-2}\oplus A^{e_i^A+e_{i-1}^A-3}\oplus A^{e_i^A+e_{i-1}^A+e_{i-2}^A-4}\\
% =A^{e_i^A-2}\oplus A^{d+e-3}\oplus A^{d+e+e_i^A-4}$
% \item $C_{n-1}^{e_n-1}=(\Sigma^{-e_{n-1}+1}A)^{e_n-1}=A^{e_n-e_{n-1}}=A^{e_n^A-1}$
% \end{enumerate}
Let $m\leq n$. As already mentioned before, a morphism of differential graded modules $a\colon C_m\to C_n$ is determined by the images of $1_A$ and $1_{\Sigma^{-e_i+1}A},~i=1,\dots,m$. These images lie in $A^0=k$ and $C_n^{e_i-1}=k\oplus A^{e_i^A-1}\oplus A^{d+e-2}$ respectively. Say $ 1_A\mapsto\a_{0}$ and $1_{\Sigma^{-e_i+1}A}\mapsto\a_i+x_i+y_i$ for $i=1,\dots,m$.
Since the differential $d_{C_n}^{e_i-1}\colon C_n^{e_i-1}\to C_n^{e_i}$ is of the form $[\p,(-1)^{e_{i-1}-1}d_A^{e_i^A-1},0]\colon k\oplus A^{e_i^A-1}\oplus A^{d+e-2}\to A^{e_i^A}$, we have
$$(d\circ a)(1_{\Sigma^{-e_n+1}A})=d(\a_i+x_i+y_i)=\p(\a_i)+(-1)^{e_{i-1}-1}d_A(x_i)=\zeta\a_i+(-1)^{e_{i-1}-1}d_A(x_i),$$
$$(a\circ d)(1_{\Sigma^{-e_n+1}A})=a(\zeta)=a(1_{\Sigma^{-e_{i-1}+1}A})\cdot\zeta=\a_{i-1}\zeta=\zeta\a_{i-1},$$
and we get $\zeta(\a_i-\a_{i-1})=(-1)^{e_{i-1}-1}d_A(x_i)$ for $i=1,\dots,m$. Since $\zeta$ is a not a boundary, it follows $\a_i=\a_{i-1}$ and $d(x_i)=d_A(x_i)=0$ for $i=1,\dots,m$.

Before we formulate our result on the indecomposability of the objects $C_n$, we recall the notion of a trivial extension. Let $R$ be a ring and $M$ an $(R^\op,R)$-bimodule. The {\em trivial extension} of $R$ by $M$ is the ring $R\ltimes M$ that has as underlying set the cartesian product $R\times M$, addition given componentwise, and multiplication defined by the formula
$$(r,m)(r',m'):=(rr',mr'+rm') \text{ for all }r,r'\in R\text{ and }m,m'\in M.$$
Equivalently, a trivial extension of $R$ by $M$ is characterised by a short exact sequence
$$0\to M\to E\to R\to 0$$
of $k$-modules such that $E\to R$ is an algebra homomorphism that splits and the kernel $M$ is an ideal in $E$ of square zero.% We refer to \cite[section~9.3]{Weibel} for this fact.

\begin{lem}\label{lem:local} A trivial extension of a local ring is again local.
\end{lem}
\begin{proof}
An element $(r,m)$ is a unit in $R\ltimes M$ if and only if $r$ is a unit in $R$. Hence the non-units in $R\ltimes M$ are the elements $\{(r,m)\mid r\in R\text{ non-unit},~m\in M\}$. They form an ideal in $R\ltimes M$ if and only if the non-units in $R$ form an ideal.
\end{proof}

\begin{prop}\label{prop:local} For all objects $C_n$, $n\in\mathbb N$ that have been constructed via the construction steps of the first and second kind as described before there is a $k$-algebra isomorphism $$\End_{\D(A)}(C_n)\cong\End_{\D(A)}(C_{n-1})\ltimes\Ho^{e_n^A-1}A.$$ In particular, $\End_{\D(A)}(C_n)$ is local, hence $C_n$ is indecomposable.
\end{prop}
\begin{proof}
We consider the triangle
$$\Sigma^{-e_n}A\overset\p\to C_{n-1}\to C_n\to\Sigma^{-e_n+1}A$$
and the corresponding long exact $\Hom$-sequence (instead of $\Hom_{\D(A)}(-,-)$ we write $(-,-)$ for short)
$$\cdots \to(\Sigma C_{n-1},C_n) \to (\Sigma^{-e_n+1}A,C_n) \to (C_n,C_n) \to (C_{n-1},C_n) \to (\Sigma^{-e_n}A,C_n) \to \cdots.$$
Since we are dealing just with homotopically projective objects, we can think of the morphism spaces to be in $\mathbf H(A)$ and do not have to consider fractions. First we notice that
\begin{align*}
(\Sigma C_{n-1},C_n) \to (\Sigma^{-e_n+1}A,C_n), \\
(C_{n-1},C_n) \to (\Sigma^{-e_n}A,C_n)
\end{align*}
are the zero maps. This can be seen as follows. Let $a$ be a morphism $\Sigma C_{n-1}\to C_n$. We want to show that the composition $a\circ(\Sigma\p)\colon\Sigma^{-e_n+1}A\overset{\Sigma\p}\to\Sigma C_{n-1}\overset a\to C_n$ is zero. We have that
$$(a\circ(\Sigma\p))(1_{\Sigma^{-e_n+1}A})=a(\zeta)\in C_n^{e_n-1}$$
%=\zeta\cdot a(1_{\Sigma^{-e_{n-1}+2}A})
and that $a(\zeta)=\zeta\cdot a(1_{\Sigma^{-e_{n-1}+2}A})$ with $\zeta\in A^{e_n-1},~a(1)\in C_n^{e_{n-1}-2}$.
Since $$C_n^{e_n-1}=((\Sigma^{-e_n+1}A)\oplus(\Sigma^{-e_{n-1}+1}A)\oplus(\Sigma^{-e_{n-2}+1}A))^{e_n-1}$$
and
$$C_n^{e_{n-1}-2}=((\Sigma^{-e_{n-2}+1}A)\oplus(\Sigma^{-e_{n-3}+1}A)\oplus(\Sigma^{-e_{n-4}+1}A))^{e_{n-1}-2},$$
we know that $a(\zeta)=\zeta\cdot a(1_{\Sigma^{-e_{n-1}+2}A})\in(\Sigma^{-e_{n-2}+1}A)^{e_n-1}=A^{d+e-2}$.
If $e\neq 2$, then $d+e-2>d$, hence $A^{d+e-2}=0$ and $a(\zeta)=0$.
If $e=2$, then $d\leq 2e=4$ and hence $d=4$. We claim that all elements in $A^{d+e-2}=A^4$ are boundaries in $C_n$. In particular, $a(\zeta)$ is a boundary, and hence the composition is clearly zero.
This is clear if $e_{n-1}^A=d$.
If $e_{n-1}^A=e$, then the Poincar\'e duality gives that multiplication with $\bar\zeta$ gives a non-zero map $\Ho^e A\to\Ho^4 A$. This map is then surjective because $\Ho^4A$ is one dimensional. Since the differential on $A^4$ is multiplication with $\zeta$ the assertion follows.

Now, let $a$ be a morphism $C_{n-1}\to C_n$. We want to show that the composition 
$$a\circ\p\colon\Sigma^{-e_n}A\overset\p\to C_{n-1}\overset a\to C_n$$ is zero. We have that
$$(a\circ\p)(1_{\Sigma^{-e_n}A})=a(\zeta)\in C_n^{e_n}$$
and that $a(\zeta)=\zeta\cdot a(1_{\Sigma^{-e_{n-1}+1}A})$ with $\zeta\in A^{e_n-1},~a(1)\in C_n^{e_{n-1}-1}$.
Since $$C_n^{e_n}=((\Sigma^{-e_{n+1}+1}A)\oplus(\Sigma^{-e_{n}+1}A)\oplus(\Sigma^{-e_{n-1}+1}A))^{e_n}$$
and
$$C_n^{e_{n-1}-1}=((\Sigma^{-e_{n-1}+1}A)\oplus(\Sigma^{-e_{n-2}+1}A)\oplus(\Sigma^{-e_{n-3}+1}A))^{e_{n-1}-1},$$
we know that $a(\zeta)=\zeta\cdot a(1_{\Sigma^{-e_{n-1}+1}A})\in(\Sigma^{-e_{n-1}+1}A)^{e_n}=A^{e_n^A}$. Since $a(1_{\Sigma^{-e_{n-1}+1}A})$ is just a scalar, $a(\zeta)$ is a boundary in $C_n$.

% $$(a\circ(\Sigma f))(1_{\Sigma^{-e_n+1}A})=a(\zeta)=\zeta\cdot a(1_{\Sigma^{-e_{n-1}+2}A})=0,$$
% where $\zeta\cdot a(1_{\Sigma^{-e_{n-1}+2}A})=0$ since $\deg \zeta=e_n^A$ and  $a(1_{\Sigma^{-e_{n-1}+2}A})\in C_n^{e_{n-1}-2}=\Sigma^{}A$. Therefore, $a\circ(\Sigma f)=0$.
% Now, let $a$ be a morphism $C_{n-1}\to C_n$. We want to show that the composition $a\circ f\colon\Sigma^{-e_n}A\overset f\to C_{n-1}\overset a\to C_n$ is zero. We calculate
% $$(a\circ f)(1_{\Sigma^{-e_n}A})=a(\zeta)=\zeta\cdot a(1_{\Sigma^{-e_{n-1}+1}A})=\zeta\cdot(\a+x_{n-1}+y_{n-1})=\zeta\cdot\a=\a\cdot f(1_{\Sigma^{-e_n}A}),$$
% where $\zeta\cdot x_{n-1}=0=\zeta\cdot y_{n-1}$ since $\deg \zeta=e_n^A$, $x_{n-1}\in A^{e_{n-1}^A-1},~y_{n-1}\in A^{d+e-2}$, and $e_n^A+e_{n-1}^A-1=d+e-1>d$, $d+e-2+e_n^A>d$.
% Therefore, $$a\circ f=\a\cdot(\iota\circ f)=\a\cdot 0=0.$$

So far we have a short exact sequence
$$0 \to (\Sigma^{-e_n+1}A,C_n) \to (C_n,C_n) \to (C_{n-1},C_{n}) \to 0$$
We further claim that
\begin{enumerate}
\item $(C_{n-1},C_{n})\cong(C_{n-1},C_{n-1}),$
\item $(\Sigma^{-e_n+1}A,C_n)\cong(\Sigma^{-e_n+1}A,C_{n-1})\cong (\Sigma^{-e_n^A+1}A,A).$ 
\end{enumerate}
The isomorphism in (1) follows from the long exact $\Hom$-sequence
$$\xymatrix@=5mm{
\cdots\ar[r] & (C_{n-1},\Sigma^{-e_n}A)\ar[r]\ar@^{=}[d] & (C_{n-1},C_{n-1}) \ar[r] & (C_{n-1},C_n) \ar[r] & (C_{n-1},\Sigma^{-e_n+1}A) \ar[r]\ar@^{=}[d] & \cdots\\
& 0 & & & 0 &
}.$$
The first isomorphism in (2) is by Lemma~\ref{lem:Ho}(i), and the second one holds since $$C_{n-1}^{e_n-1}=((\Sigma^{-e_{n-1}+1}A)\oplus(\Sigma^{-e_{n-2}+1}A))^{e_n-1}=A^{e_n^A-1}\oplus A^{d+e-2}$$
and all elements of $A^{d+e-2}$ are boundaries as already shown and used above.
Hence we get a short exact sequence of $k$-modules
$$0 \to (\Sigma^{-e_n^A+1}A,A) \to (C_n,C_n) \to (C_{n-1},C_{n-1}) \to 0.$$
The morphism $(C_n,C_n) \to (C_{n-1},C_{n-1})$ is a $k$-algebra homomorphism that splits.
The multiplicativity can be seen from the following commutative diagram.
$$\xymatrix@=5mm{
C_{n-1} \ar@^{{(}->}_\iota[d]\ar@{.>}^{\exists_1}[rrdd]\\
C_n\ar[d]_\a \\
C_n \ar[d]_\b& & C_{n-1}\ar@{.>}^{\exists_1}[rd]\ar@_{{(}->}_\iota[ll]\\
C_n & & & C_{n-1}\ar@_{{(}->}_\iota[lll]\\
}.$$
The splitting is given by assigning to $a\colon C_{n-1}\to C_{n-1}$ the extended map that is determind by 
$$1_{\Sigma^{-e_n+1}A}\mapsto \a\cdot 1_{\Sigma^{-e_n+1}A}\in\Sigma^{-e_n+1}A\subseteq C_n^{e_n-1},$$
where $\a=a(1_A)\in k=A^0=C_{n-1}^0$. One checks that this gives a chain map by looking at the two cases $e_n^A=d$, and $e_n^A=e,~e_{n-1}^A=d$. The image of $(\Sigma^{-e_n^A+1}A,A) \to (C_n,C_n)$ is an ideal in $(C_n,C_n)$ of square zero, as one sees from the following commutative diagrams:
The first diagram shows that the square is zero.
$$\xymatrix@=5mm{
C_n\ar[d]\\
\Sigma^{-e_n+1}A\ar[d]\ar@{.>}^{\exists_1}[rd]\\%\ar@{.>}_{\exists_1}[rrrrrrrrddddd]\\
%\\
C_n\ar[d] & C_{n-1}\ar@_{{(}->}_\iota[l]\ar^0[ld]\\
\Sigma^{-e_n+1}A\ar[d]\\%\ar@{.>}_{\exists_1}[rrrrrrrrdd]\\
C_n \\%& &&&&&&& C_{n-1}\ar@_{{(}->}_\iota[llllllll]
}$$
It is clear that it is a left ideal.
% $$\xymatrix@=5mm{
% C_n\ar[d]\\
% \Sigma^{-e_n+1}A\ar[dd]\ar@{.>}_{\exists_1}[rdd]\ar@{.>}_{\exists_1}[rrrrrrrrddd]\\
% \\
% C_n\ar[d] & C_{n-1}\ar@_{{(}->}_\iota[l]\\
% C_n & &&&&&&& C_{n-1}\ar@_{{(}->}_\iota[llllllll]
% }$$
To show that it is a right ideal, we show that
%split a given morphism $a\colon C_n\to C_n$ up into  $a_1\colon C_{n-1}\to C_{n}$ and $a_2\colon\Sigma^{-e_n+1}A\to C_n$.
% $$\xymatrix@=5mm{
% C_{n-1}\ar@^{{(}->}[r]\ar_{a_1}@{.>}[rd]&C_n \ar_a[d]\ar[r] & \Sigma^{-e_n+1}A\ar^{a_2}@{.>}[ld]\\
% &C_n%\ar[d]\\
% %&\Sigma^{-e_n+1}A\ar[dd]\ar@{.>}_{\exists_1}[rdd]\\
% %\\
% %&C_n & C_{n-1}\ar@_{{(}->}_\iota[l]\\
% }$$
the composition of a given morphism $C_n\to\Sigma^{-e_n+1}A\to C_n$ with some $a\colon C_n\to C_n$ from the right $C_n\overset{a}\to C_n\to\Sigma^{-e_n+1}A\to C_n$ is in the kernel of $(C_n,C_n)\to(C_{n-1},C_{n-1})$. This follows from the following diagram.
$$\xymatrix@=5mm{
C_{n-1}\ar@^{{(}->}[d]\ar@{.>}^{\exists_1}[rdd]\\
C_n\ar_a[d]\\
C_n\ar[d] & C_{n-1}\ar@_{{(}->}_\iota[l]\ar^0[ld]\\
\Sigma^{-e_n+1}A\ar[d]\\
C_n
}$$
Hence $(\Sigma^{-e_n^A+1}A,A)$ has a $(C_{n-1},C_{n-1})$-module structure. Therefore, $(C_n,C_n)$ is the trivial extension of $\Lambda:=(C_{n-1},C_{n-1})$ via the $(\Lambda^\op,\Lambda)$-bimodule $(\Sigma^{-e_n^A+1}A,A)$. Since a trivial extension of a local ring is again local (see Lemma~\ref{lem:local}) and we start with the local ring $(A,A)\cong k$, also $(C_n,C_n)$ is local and so $C_n$ indecomposable.
\end{proof}

\begin{rem}
We could alternatively use \cite[Lemma~6.5]{HKR} to get the indecomposability of the objects $C_n$. For applying this Lemma it is necessary to have $(C_{n-1},\Sigma^{-n+1}A)=0$. This has been used in our proof, too. But here in our more special situation, we do more and give a formula for the endomorphism ring of the objects $C_n\in\D(A)$.
\end{rem}

\subsection{The proof}
Now we are ready to finish the proof of Theorem~\ref{thm:noc} by giving objects that lie in different components:

ad (1): We denote by $C_\a$, $\a\in\{0,1\}^n$, $n\in\mathbb N_0$ the object that is constructed by the described $n$-step mapping cone construction with construction steps of the first and second kind encoded in the $n$-tuple $\a$. By the considerations above $\a$ should not contain two neighbour entries $1$. Denote by $\M_n\subseteq\{0,1\}^n$ the subset of all such $n$-tuples with no neighbour entries $1$. An example is $C_{(1,0,1)}$ that corresponds to some object $C_3$ that has been constructed using $e_1=e,~e_2=e+d-1,~e_3=2e+d-2$.  %:=A(e)(e+d-1)(2e+d-2)$$
% An example is
% \begin{align*}
% C_{(0,\dots,0,1,0,\dots,0)}:=A(d)(2d-1)(3d-2)\cdots((i-1)d-(i-2))(e+(i-1)d-(i-1))\\
% (e+id-i))\cdots(e+ni-n)
% \end{align*}
The collection of objects can be visualised in the following way, where the arms going up mean a construction step of the first kind and the arms going down a construction step of the second kind.
$$\xymatrixrowsep{.02pc}\xymatrixcolsep{.4pc}\xymatrix{ &&&&&&&&&& & & C_{(0,0,0)} \\
 &&&&&&& &  C_{(0,0)} \ar@{-}[rrrru]\ar@{-}[rrrrd] && \\
 &&&&&&&& & & && C_{(0,0,1)} \\
 &&&& C_{(0)} \ar@{-}[rrrruu]\ar@{-}[rrrrdd] & &&& \\
 &&&&&&&& &&& & C_{(0,1,0)}  \\
 &&&&&&&&  C_{(0,1)} \ar@{-}[rrrru] \ar@{-}[rrrrd] && \\
 &&&& &&&&&&&& \cdot && \\
 C \ar@{-}[rrrruuuu]\ar@{-}[rrrrdddd] & & & &&&&&&&&&&&&& \cdots\\
 &&&&&&&&&& & & C_{(1,0,0)} \\
 &&&&&&& &  C_{(1,0)} \ar@{-}[rrrru]\ar@{-}[rrrrd] && \\
 &&&&&&&& & & && C_{(1,0,1)} \\
 &&&& C_{(1)} \ar@{-}[rrrruu]\ar@{-}[rrrrdd] & &&& \\
 &&&&&&&& &&& & \cdot  \\
 &&&&&&&& \cdot \ar@{-}[rrrru]\ar@{-}[rrrrd] && \\
 &&&& &&&&&&&& \cdot \\
}$$
Given $n\in\mathbb N_0$, the $C_\a$, $\a\in\M_n$ are pairwise non isomorphic indecomposable and not shifts of each other as we see for example by looking at their cohomology (Lemma~\ref{lem:Ho}(ii)). They are compact and have the same value under the additive function $f$. Since objects in the same component with the same value under $f$ are just iterated $(d-1)$-shifts of each other the constructed modules have to lie in different Auslander-Reiten components. If $n$ increases, then also the number of elements of $\M_n$ increases. Hence, if $\dim\Ho^*A\geq 3$, then there are at least countably infinitely many components. 

ad (2): We have the additional assumption that $\dim \Ho^eA\geq 2$ for some $2\leq e\leq d-2$.
% and $\dim\Ho^{e-1}A$ being either zero or $1$. 
Let $\zeta_1,\zeta_2\in A^e$ such that $0\neq\bar\zeta_1,\bar\zeta_2\in \Ho^eA$ and $\bar\zeta_1,\bar\zeta_2$ linearly independent. Let $\la:=(\la_1,\la_2)\in k^2\setminus\{0\}$ and define $\p_\la\colon\Sigma^{-e}A\to A,~1\mapsto\la_1\zeta_1+\la_2\zeta_2$. Let $C=C_{(\la)}=C_1$ be by definition the mapping-cone of $\p_\la$.
$$\Sigma^{-e}A\overset{\p_\la}\to A\to C_{(\la)}\to\Sigma^{-e+1}A$$
As in (1) the $C_{(\la)},\la\in k^2\setminus\{0\}$ are compact, indecomposable, and have the same value under the additive function $f$. Let $\la'\in k^2\setminus\{0\}$ and $C':=C_{(\la')}$. We claim that $C\cong C'$ if and only if $\la=\mu\la'$ for some $\m\in k\setminus\{0\}$, i.e.\ $\overline\la=\overline{\la'}\in\mathbb P_1(k)$. If $\overline\la=\overline{\la'}\in\mathbb P_1(k)$, then an isomorphism between $C$ and $C'$ could easily been written down. For the case that $\overline\la\neq\overline{\la'}\in\mathbb P_1(k)$, we consider the long exact $\Hom$-sequence (instead of $\Hom_{\D(A)}(-,-)$ we write $(-,-)$ for short)
$$ \cdots \to (C,C') \to (A,C') \overset{\p^*_\la}\to (\Sigma^{-e}A,C') \to \cdots.$$
% $$\xymatrix@=5mm{\cdots \ar[r] &(C,C') \ar[r]\ar@{=}[d] & (A,C') \ar[r]\ar@{=}[d] & (\Sigma^{-e}A,C') \ar@{=}[d] & \cdots \\
% & (C,C') \ar[r]\ar@{=}[d] & \Ho^0C' \ar@{=}[d]\ar[r] & \Ho^eC' \ar@{=}[d] \\
% & (C,C') \ar@^{{(}->}[r] & k \ar[r] & \Ho^eC'
% }$$
We first show that
$$\p^*_\la:=\Hom_{\D(A)}(\p_\la,C')\colon\Hom_{\D(A)}(A,C')\to\Hom_{\D(A)}(\Sigma^{-e}A,C')$$
% $$\xymatrix{
% \Hom_{\D(A)}(A,C')\ar[r]^{f^*_\la}\ar@{=}[d] & \Hom_{\D(A)}(\Sigma^{-e}A,C') \ar@{=}[d] \\
% %\Hom_{\mathbf H(A)}(A,C')\ar@{=}[d]  & \Hom_{\mathbf H(A)}(\Sigma^{-e}A,C')\ar@{=}[d]  \\
% \Ho^0 C' & \Ho^e C'
% }$$
% \begin{equation*}
% \begin{split}
% \Ho^0 C'\cong{\Hom_{mathbf H(A)}}(A,C')\cong\Hom_{\D(A)}(A,C')\overset{f^*_\la}\to\Hom_{\D(A)}(\Sigma^{-e}A,C')\cong\Hom_{\mathbf H(A)}(\Sigma^{-e}A,C')\\
% \cong\Ho^e C'
%  \end{split}
%  \end{equation*}
is injective. Note that $\Hom_{\D(A)}(A,C')\cong\Ho^0C'$ and $\Hom_{\D(A)}(\Sigma^{-e}A,C')\cong\Ho^eC'$. Since $\Ho^0C'\cong k$, a morphism $a\colon A\to C'$ is given by a scalar which we also denote by $a$. Let $a$ be non-zero. If $ 0=\p_\la^*(a)=a\circ\p_\la$ in $\D(A)$, then $\Ho^*(a\circ\p_\la)=0$ and therefore $\Ho^*(a\circ\p_\la)(\bar 1)=\overline{a(\la_1\zeta_1+\la_2\zeta_2)}=0\in\Ho^eC'$, i.e.\ $a(\la_1\zeta_1+\la_2\zeta_2)\in\Im d^{e-1}_{C'}$. It holds that $d^{e-1}_{C'}=[\p_{\la'},d_A^{e-1}]\colon k\oplus A^{e-1}\to A^e$. So there exists $b\in k$ such that $a(\la_1\zeta_1+\la_2\zeta_2)-\p_{\la'}(b)\in\Im d_A^{e-1}$, i.e.\ $(a\la_1-b\la_1')\overline{\zeta_1}+(a\la_2-b\la_2')\overline{\zeta_2}=0\in\Ho^eA$. It follows $a\la_1-b\la_1'=0$ and $a\la_2-b\la_2'=0$. Hence $\la_1={\frac b a}\la_1'$ and $\la_2={\frac b a}\la_2'$, i.e.\ $\la={\frac b a}\la'$. So $(A,C') \to (\Sigma^{-e}A,C')$ is injective for $\overline\la\neq\overline{\la'}\in\mathbb P_1(k)$. Hence $(C,C') \to (A,C')$ is the zero map. But if there would be an isomorphism $C\overset\cong\to C'$, then the composition $A\to C\to C'$ is non-zero giving a contradiction. Since in addition the $C_{(\la)}$, $\la\in\mathbb P_1(k)$ are not shifts of each other, as in (1) they all have to lie in different components and we get a $1$-parameter family of components. The described construction corresponds to the construction step of the second kind. After a  construction step of the first kind we can continue with a step of the second kind as before and get a $2$-parameter family and so on.
\end{proof}

\subsection{An example}
We illustrate the mapping cone constructions in the proof of Theorem~\ref{thm:noc} by the following example.
\begin{exm}\label{exm:S2S2}
Let the characteristic of $k$ be zero. Since $S^e\times S^d$ is as the product of two formal spaces again a formal space we have that $C^*(S^e\times S^d;k)$ is weakly equivalent by a series of quasi-isomorphisms to its cohomology differential graded algebra $\Ho^*(S^e\times S^d;k)$. By the K\"unneth formula this is isomorphic to the tensor product of differential graded algebras
$$\Ho^*(S^e;k)\otimes_k\Ho^*(S^d;k)\cong k[x]/(x^2)\otimes_k k[y]/(y^2)\cong
\begin{cases}
k[x,y]/(x^2,y^2) & \text{if $e$ or $d$ even}\\
E(x,y) & \text{if $e$ and $d$ odd}
\end{cases}
%\Biggr\}=:A
,$$ where $\deg x=e,\deg y=d$, and $E(x,y)$ is the exterior algebra in the two generators $x$ and $y$. Note that in both cases the appearing differential graded algebra is a finite dimensional model for $C^*(S^d\times S^d;k)$ in the sense of Remark~\ref{rem:model} that we denote by $A$.
For $e\neq d$ the underlying complex of the differential graded algebra $A$ is
$$\xymatrix@=4mm{ A\colon & k \ar[r] & 0\ar[r] & \cdots\ar[r] & 0\ar[r] & k\ar[r] & 0\ar[r] & \cdots\ar[r] & 0\ar[r] & k\ar[r] & 0\ar[r] & \cdots\ar[r] & 0\ar[r] & k\\
& 1 &&&& x &&&& y &&&& xy
},$$
%$$A\colon\quad k\to 0\to\cdots\to 0\to k\to 0\to\cdots\to 0\to k\to 0\to\cdots\to 0\to k,$$
%where the one dimensional spaces appear in degrees $1,e,d,e+d$, 
and for $e=d$
$$\xymatrix@=4mm{ A\colon & k \ar[r] & 0\ar[r] & \cdots\ar[r] & 0\ar[r] & k^2\ar[r] & 0\ar[r] & \cdots\ar[r] & 0\ar[r] & k\\
& 1 &&&& x,y &&&& xy
}$$
%$$A\colon\quad k\to 0\to\cdots\to 0\to k^2\to 0\to\cdots\to 0\to k.$$
In the following we take $e=d=2$. The described mapping cone constructions of the objects $C_1=C_{(\la)}$ and $C_2=C_1(3d-1)$ in the proof of Theorem~\ref{thm:noc}(2) may be visualised as follows.
The morphism of differential graded $A$-modules
$$\xymatrix@=3mm{
\Sigma^{-d}A  \ar[dd]_{\p_\la}       &&&&&&&&&&& k \ar[rr]\ar[dd]_{[\la_1\la_2]} && 0\ar[rr]\ar[dd] && k^2 \ar[rr]\ar[dd]^{\la_2\brack\la_1} && 0\ar[rr] && k &&&&& \\
\\
A  &&&&&&&  k\ar[rr]  && 0\ar[rr] && k^2\ar[rr] && 0\ar[rr] && k &&&&&   \\
}$$
gives as mapping cone the differential graded $A$-module
$$\xymatrix@=3mm{
&&&&&&&&& k \ar[rr]\ar[rrdd]_{[\la_1\la_2]} && 0\ar[rr]\ar[rrdd] && k^2 \ar[rr]\ar[rrdd]^{\la_2\brack\la_1} && 0\ar[rr] && k &&&&&\\
C_1 \\
&&&&&&& k\ar[rr] && 0\ar[rr] && k^2\ar[rr] && 0\ar[rr] && k &&&&&
}$$
% $$\xymatrix@=4mm{
% % \deg &&  0 &  &  & d & & & 2d &&& 3d\\
%  \Sigma^{-d}A  \ar[d]       &&&& k \ar[d]^{f_\la} & 0 & k^2 \ar[d] & 0 & k \\
% A \ar[d]  &&  k  & 0 & k^2 & 0 & k    \\
% %C_1 \ar[d]      && k & k \ar[r]^{-f_\la} & k^2 & k^2 \ar[r] & k  & k \\
% }$$
The morphism
$$\xymatrix@=3mm{
 \Sigma^{-3d}A  \ar[ddd]_\p       &&&&&&&&&&&&  k \ar[rr]\ar[dd]^\p && 0\ar[rr] && k^2\ar[rr] && 0\ar[rr] && k \\
 \\
 &&&& k \ar[rr]\ar[rrdd] && 0\ar[rr]\ar[rrdd] && k^2 \ar[rr]\ar[rrdd] && 0\ar[rr] && k \\
C_1 \\
&& k\ar[rr] && 0\ar[rr] && k^2\ar[rr] && 0\ar[rr] && k
}$$
gives as mapping cone
$$\xymatrix@=3mm{
 &&&&&&&&&&& k \ar[rr]\ar[rrdd]^\p && 0\ar[rr] && k^2\ar[rr] && 0\ar[rr] && k && \\
\\
C_2 &&&&& k \ar[rr]\ar[rrdd] && 0\ar[rr]\ar[rrdd] && k^2 \ar[rr]\ar[rrdd] && 0\ar[rr] && k \\
\\
&&& k\ar[rr] && 0\ar[rr] && k^2\ar[rr] && 0\ar[rr] && k
}$$

% $$\xymatrix@=3mm{
%  \deg &&  0 &  &  & d & & & 2d &&& 3d &&& 4d &&& 5d\\
%  \Sigma^{-3d}A  \ar[d]       &&&&&&&&&&&  k \ar[d]^f &  &  & k^2 & & & k \\
% C_1 \ar[d]    && k & & k \ar[r]^{f_\la} & k^2 & & k^2 \ar[r] & k & & & k\\
% C_2 \ar[d]      && k & & k \ar[r]^{f_\la} & k^2 & & k^2 \ar[r] & k & & k \ar[r]^f & k  &  & k^2 & & & k\\
% &      }$$
In Theorem~\ref{thm:noc}(2) we have proven that the $C_{(\la)},\la\in\mathbb P_1(k)$ give a 1-parameter family of components for the Auslander-Reiten quiver of $S^d\times S^d$. If one continues the construction on $C_2$  with a construction step as the first one, then we get a 2-parameter family and so on.
% $$\xymatrix@=4mm{
%  \Sigma^{-d}A  \ar[d]       &&&&&&  k \ar[r]\ar[d] & 0 \ar[r] & \cdots \ar[r] & 0 \ar[r] & k^2 \ar[r] & 0 \ar[r] &\cdots \ar[r] & 0 \ar[r] & k \\
% A \ar[d]  &&  k \ar[r] & 0 \ar[r] & \cdots \ar[r] & 0 \ar[r] & k^2 \ar[r] & 0 \ar[r] &\cdots \ar[r] & 0 \ar[r] & k    \\
% C_1 \ar[d]      && k \ar[r] & 0 \ar[r] & \cdots \ar[r] & k \ar[r] & k^2 \ar[r] & 0 \ar[r] &\cdots \ar[r] & k^2 \ar[r] & k \ar[r] &\cdots \ar[r] & 0 \ar[r] & k \\
% &      }$$
\end{exm}

Unfortunately, we were not able to distinguish the case were $\dim\Ho^nA\leq 1$ for all $n$ from the case were $\dim\Ho^eA\geq 2$ for some $e$. See Section~6 `Some open questions' for further discussion.

\newpage

\section{Another interpretation of the number of components}

We give an interpretation of the number of components of the Auslander-Reiten quiver of a triangulated Krull-Remak-Schmidt category having Auslander-Reiten triangles in terms of equivalence classes of simple objects in the abelianisation of the given triangulated category. This is based on the functorial approach to Auslander-Reiten triangles given in Auslander's Philadelphia notes \cite{A2} (see also Appendix~B). %The ideas are just adapted to the triangulated situation.

Let $\T$ be a triangulated Krull-Remak-Schmidt category having Auslander-Reiten triangles, e.g.\ $\T=\D^c(A)$ for $A$ a simply connected Gorenstein differential graded algebra of finite type. Further let $\widehat\T:=\fp(\T^\op,\Ab)$ be the functor category of finitely presented additive functors $\T^\op\to\Ab$, where a functor $F$ is {\em finitely presented} (or {\em coherent}) if there exists an exact sequence $(-,Y)\to(-,X)\to F\to 0$. We define for simple objects $S,T\in\widehat\T$ the relation
$$S\sim T \iff \Ext^1_{\widehat\T}(S,T)\neq 0\vee\Ext^1_{\widehat\T}(T,S)\neq 0$$
and take the induced equivalence relation.% on $\widehat\T$.
\sloppy
\begin{prop}\label{prop:func}
Let $\T$ be a triangulated Krull-Remak-Schmidt category having Auslander-Reiten triangles. Then there is a natural one-to-one correspondence between the Auslander-Reiten components of $\T$ and the equivalence classes of simple objects in $\widehat\T$.
\end{prop}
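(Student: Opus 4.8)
The plan is to run the classical functorial description of the abelianisation $\widehat\T$ and translate the relation $\sim$ into the combinatorics of the Auslander-Reiten quiver. First I would record the structure of $\widehat\T=\fp(\T^\op,\Ab)$: since $\T$ is triangulated it has weak kernels, so $\widehat\T$ is abelian, the Yoneda functor $X\mapsto(-,X):=\Hom_\T(-,X)$ identifies $\T$ with the full subcategory of projective objects, and for each indecomposable $X$ the representable $(-,X)$ has a unique maximal subobject $\rad(-,X)$ (the subfunctor of non-isomorphisms into $X$) with simple quotient $S_X:=(-,X)/\rad(-,X)$. A short argument using finite presentation together with the Krull-Remak-Schmidt property shows that every simple object of $\widehat\T$ is isomorphic to $S_X$ for a unique indecomposable $X$, so that $X\mapsto S_X$ is a bijection between isomorphism classes of indecomposables in $\T$ (the vertices of the Auslander-Reiten quiver) and isomorphism classes of simple objects in $\widehat\T$.

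The heart of the proof is to compute $\Ext^1_{\widehat\T}(S_X,S_Y)$ and match it with arrows of the quiver. Let $\tau X\xrightarrow{u}E\xrightarrow{v}X\xrightarrow{w}\Sigma\tau X$ be the Auslander-Reiten triangle ending at $X$. The defining property that $v$ is minimal right almost split yields, via Auslander's functorial characterisation (Appendix~B), a projective cover $(-,E)\xrightarrow{v_*}\rad(-,X)=\Omega S_X$; here one uses that the image of $v_*$ equals $\rad(-,X)$ and that right minimality rules out superfluous summands. Writing $E\cong\bigoplus_i E_i$ with $E_i$ indecomposable, the top of $\Omega S_X$ is $\bigoplus_i S_{E_i}$, and the multiplicity of a fixed indecomposable $Y$ among the $E_i$ is, by definition, the valuation of the arrow $Y\to X$ in the Auslander-Reiten quiver.

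Next I would feed the short exact sequence $0\to\Omega S_X\to(-,X)\to S_X\to 0$ into $\Hom_{\widehat\T}(-,S_Y)$. Projectivity of $(-,X)$ gives $\Ext^1_{\widehat\T}(S_X,S_Y)=\Coker\big(\Hom_{\widehat\T}((-,X),S_Y)\to\Hom_{\widehat\T}(\Omega S_X,S_Y)\big)$. By Yoneda the left-hand term is $S_Y(X)=(X,Y)/\rad(X,Y)$, which vanishes when $X\not\cong Y$; the right-hand term, computed through the top of $\Omega S_X$, is $\bigoplus_i\Hom_{\widehat\T}(S_{E_i},S_Y)$, a sum of copies of the division ring $\End_{\widehat\T}(S_Y)$ indexed by the $E_i\cong Y$. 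Hence for $X\not\cong Y$ one gets $\Ext^1_{\widehat\T}(S_X,S_Y)\neq 0$ if and only if $Y$ is a summand of $E$, that is, if and only if there is an arrow $Y\to X$ in the Auslander-Reiten quiver. The diagonal case $X\cong Y$ contributes only self-extensions, which are irrelevant to the connectivity argument below.

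Finally I would assemble the correspondence. By the previous step the symmetric relation $S_X\sim S_Y$ holds exactly when there is an arrow $Y\to X$ or $X\to Y$, i.e.\ when $X$ and $Y$ are adjacent in the underlying graph of the Auslander-Reiten quiver. The equivalence relation generated by $\sim$ therefore identifies $S_X$ and $S_Y$ precisely when $X$ and $Y$ lie in the same connected component, so passing to equivalence classes yields the bijection: an Auslander-Reiten component $\mathcal C$ corresponds to the class $\{[S_X]\mid X\in\mathcal C\}$. Naturality is immediate since the formation of $\widehat\T$ and of the simple functors $S_X$ is functorial in $\T$. I expect the \emph{main obstacle} to be the middle step: extracting the projective cover of $\Omega S_X$ from the minimal right almost split map and matching the multiplicities in the middle term $E$ with arrows of the quiver, since this is exactly where Auslander-Reiten theory enters; the surrounding homological algebra is then routine.
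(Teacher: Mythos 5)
Your proposal is correct, and its skeleton is the same as the paper's: both rest on the bijection $X\mapsto S_X$ between indecomposables and simples (Proposition~\ref{prop:simplefunct}), both reduce the statement to the key lemma that for non-isomorphic indecomposables $X,Y$ one has $\Ext^1_{\widehat\T}(S_X,S_Y)\neq 0$ exactly when there is an irreducible map $Y\to X$, and both then pass to the generated equivalence relation to recover connected components. Where you genuinely diverge is in how that lemma is proved. The paper works at the level of explicit cocycles for the complex $(-,\tau X)\to(-,Y)\to(-,X)$ coming from the Auslander--Reiten triangle: a non-zero class is represented by a non-zero $\phi\colon(-,Y)\to S_{X'}$, and a lifting argument through the projective cover $(-,X')\to S_{X'}$ together with right minimality forces $X'$ to be a summand of the middle term; conversely, from an irreducible map it constructs the cocycle $\pi'\circ p_*$ by hand and checks it is not a coboundary, which requires the factorisation of irreducible maps through the Auslander--Reiten triangle ending at $X'$. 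You instead dimension-shift along $0\to\Omega S_X\to(-,X)\to S_X\to 0$, observe that $\Hom_{\widehat\T}((-,X),S_Y)\cong S_Y(X)=0$ for $X\not\cong Y$ (Yoneda plus the uniqueness in Proposition~\ref{prop:simplefunct}), and compute $\Hom_{\widehat\T}(\Omega S_X,S_Y)$ through the top $\bigoplus_i S_{E_i}$ of the syzygy. This buys you both directions of the equivalence in a single computation, with no explicit cocycle to exhibit, and it yields a quantitative refinement the paper does not record: $\Ext^1_{\widehat\T}(S_X,S_Y)$ is a sum of copies of $\End_{\widehat\T}(S_Y)$ indexed by the valuation of the arrow $Y\to X$. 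The price is that you must justify the standard radical facts in $\widehat\T$ that you lean on --- that $(-,E)\to\Omega S_X$ is a projective cover (this is exactly the minimality in the functorial characterisation of Auslander--Reiten triangles in Appendix~B), that the top of $\Omega S_X$ therefore agrees with the top of $(-,E)$, and that maps to a simple functor factor through the top; these are all available from Auslander's theory, so there is no gap, but they are the honest content of the step you correctly single out as the main obstacle.
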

\fussy
The proposition is a consequence of the fact that there is a one-to-one correspondence between the isomorphism classes of indecomposable objects in $\T$ and the isomorphism classes of simple objects in $\widehat\T$ given by the assignment $$X\mapsto S_X:=\Hom_\T(-,X)/{\rad\Hom_\T(-,X)}$$ (see Proposition~\ref{prop:simplefunct}) and the following lemma.

\begin{lem}
Let $X,X'$ be non-isomorphic indecomposable objects in $\T$. Then it holds $\Ext^1_{\widehat\T}(S_X,S_{X'})\neq 0$ if and only if there exists an irreducible map $X'\to X$.
\end{lem}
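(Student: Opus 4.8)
The plan is to translate everything into homological algebra in the abelian functor category $\widehat\T$ and to read $\Ext^1_{\widehat\T}(S_X,S_{X'})$ off the projective presentation of $S_X$. Recall that the representable functor $(-,X)$ is projective in $\widehat\T$ and that $S_X=(-,X)/{\rad(-,X)}$, so there is a short exact sequence $0\to\rad(-,X)\to(-,X)\to S_X\to 0$. Applying the contravariant functor $\Hom_{\widehat\T}(-,S_{X'})$ produces a long exact sequence. Since $(-,X)$ is projective, $\Ext^1_{\widehat\T}((-,X),S_{X'})=0$, and by Yoneda $\Hom_{\widehat\T}((-,X),S_{X'})\cong S_{X'}(X)$. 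Because $X$ and $X'$ are non-isomorphic indecomposables, every morphism $X\to X'$ is radical, so $S_{X'}(X)=(X,X')/{\rad(X,X')}=0$. Hence the sequence collapses to a natural isomorphism
$$\Ext^1_{\widehat\T}(S_X,S_{X'})\cong\Hom_{\widehat\T}(\rad(-,X),S_{X'}).$$

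Next I would identify the right-hand side using the Auslander-Reiten triangle. Let $\tau X\to Y\xrightarrow{v}X\to\Sigma\tau X$ be the Auslander-Reiten triangle ending at $X$, so that $v$ is minimal right almost split. Using that $v$ is right almost split one checks that $\Im((-,v))=\rad(-,X)$, and right-minimality of $v$ makes the induced epimorphism $(-,Y)\to\rad(-,X)$ a projective cover in $\widehat\T$. Writing $Y\cong\bigoplus_i Y_i^{a_i}$ with the $Y_i$ pairwise non-isomorphic indecomposables, the top of $\rad(-,X)$ is therefore $\bigoplus_i S_{Y_i}^{a_i}$.

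Now, since $S_{X'}$ is simple, any nonzero morphism $\rad(-,X)\to S_{X'}$ is surjective and factors through the top, so $\Hom_{\widehat\T}(\rad(-,X),S_{X'})\cong\Hom_{\widehat\T}(\bigoplus_i S_{Y_i}^{a_i},S_{X'})$, which is nonzero exactly when $X'\cong Y_i$ for some $i$, i.e.\ when $X'$ is a direct summand of $Y$. Finally I invoke the standard fact that the indecomposable summands of the middle term $Y$ of the Auslander-Reiten triangle ending at $X$ are precisely the sources of irreducible maps into $X$: the object $X'$ is a summand of $Y$ if and only if there is an irreducible map $X'\to X$ (equivalently $\rad_\T(X',X)/{\rad_\T^2(X',X)}\neq 0$). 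Combining the displayed isomorphism with this chain of equivalences yields that $\Ext^1_{\widehat\T}(S_X,S_{X'})\neq 0$ if and only if there is an irreducible map $X'\to X$.

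The main obstacle is the middle step: establishing $\Im((-,v))=\rad(-,X)$ together with the fact that $(-,v)$ is a projective cover of $\rad(-,X)$, which is exactly where the almost-split and right-minimality properties of $v$ are used, and where one must be careful that forming radicals and tops is compatible with the evaluations on indecomposables. Everything else — the Yoneda identifications, the vanishing $S_{X'}(X)=0$, and computing maps into a simple functor via its top — is formal.
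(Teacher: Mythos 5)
Your proof is correct, and it organises the homological algebra genuinely differently from the paper. The paper computes $\Ext^1_{\widehat\T}(S_X,S_{X'})$ as the first cohomology of $\Hom_{\widehat\T}(P_*,S_{X'})$, where $P_*$ is the three-term minimal projective presentation $(-,\tau X)\to(-,Y)\to(-,X)$ coming from the Auslander-Reiten triangle, and then proves the two implications by explicit factorisation arguments: a nonzero cocycle $(-,Y)\to S_{X'}$ is played off against the projective cover $(-,X')\to S_{X'}$, right minimality producing a split monomorphism $X'\to Y$; conversely an irreducible map gives a split epimorphism $p\colon Y\to X'$, and the induced map $(-,Y)\to(-,X')\to S_{X'}$ is checked to be a cocycle that is not a coboundary. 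You instead dimension-shift along $0\to\rad(-,X)\to(-,X)\to S_X\to 0$, using Yoneda and $S_{X'}(X)=0$ (the only place where $X\not\cong X'$ enters; the paper needs that hypothesis only at the very end of its converse direction), and then compute $\Hom_{\widehat\T}(\rad(-,X),S_{X'})$ through the top of $\rad(-,X)$. Both proofs rest on the same two pillars: that the AR triangle yields a minimal projective presentation of $S_X$ — equivalently that $(-,Y)\to\rad(-,X)$ is a projective cover, which you rightly isolate as the crux and which is exactly the Auslander result quoted in Appendix~B — and Happel's lemma matching indecomposable direct summands of $Y$ with sources of irreducible maps into $X$, which the paper also uses in both directions. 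What your route buys is a single structural isomorphism $\Ext^1_{\widehat\T}(S_X,S_{X'})\cong\Hom_{\widehat\T}\bigl(\bigoplus_i S_{Y_i}^{a_i},S_{X'}\bigr)$ settling both directions at once, and in fact showing more: this $\Ext^1$ measures the multiplicity of $X'$ in $Y$, i.e.\ the valuation of the arrow $[X']\to[X]$ in the Auslander-Reiten quiver. The small price is the identification of the top of $\rad(-,X)$ with the top of $(-,Y)$, which requires knowing that the (superfluous) kernel of a projective cover is contained in $\rad(-,Y)$ — a standard fact, but one worth stating explicitly; by contrast, the factorisation of any nonzero map to the simple $S_{X'}$ through the top is immediate, since its kernel is a maximal subfunctor and therefore contains the radical.
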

\begin{proof}
Let $\tau X\overset f\to Y\overset g\to X\to\Sigma\tau X$ be an Auslander-Reiten triangle. Then 
$$(-,\tau X)\overset{f_*}\to (-,Y)\overset{g_*}\to (-,X)\overset\pi\to S_X\to 0$$
is a minimal projective presentation of $S_X$. Denote the complex
$$(-,\tau X)\overset{f_*}\to (-,Y)\overset{g_*}\to (-,X)$$ by $P_*$ and apply $(-,S_{X'})$ to it. Then $\Ext^1_{\widehat\T}(S_X,S_{X'})=\Ho^1(P_*,S_{X'})$ and $\Ext^1_{\widehat\T}(S_X,S_{X'})\neq 0$ means that there is a non-zero natural transformation $\phi\colon(-,Y)\to S_{X'}$ such that the composition $\phi\circ f_*$ is zero and $\phi$ does not factor through $g_*$.

Let $\phi\colon (-,Y)\to S_{X'}$ represent a non-zero element in $\Ext^1_{\widehat\T}(S_X,S_{X'})$. Since $(-,Y)$ and $(-,X')$ are projective, and $\pi'\colon(-,X')\to S_{X'}$ and $\phi\colon(-,Y)\to S_{X'}$ are epimorphisms there exist factorisations $(-,\a)\colon(-,Y)\to(-,X')$ and $(-,\b)\colon(-,X')\to(-,Y)$.
$$\xymatrix{
& (-,Y) \ar^\phi[d] \ar@{-->}_{(-,\a)}[dl] & (-,X') \ar^{\pi'}[ld] \ar@{-->}_{(-,\b)}[l]\\
(-,X') \ar^{\pi '}[r] & S_{X'}
  }$$
Since $\pi'\colon(-,X')\to S_{X'}$ is a projective cover it is right minimal hence the composition $(-,\a)\circ(-,\b)$ is an isomorphism. Therefore $(-,\b)$,  and hence $\b$ are split monomorphisms. So $X'$ is a direct summand of $Y$ and there exists an irreducible map $X'\to X$.

If on the other hand there is an irreducible map $a\colon X'\to X$, then there exists a split epimorphism $p\colon Y\to X'$. We claim that the composition $\phi:=\pi'\circ p_*$ gives a non-zero element in $\Ext^1_{\widehat\T}(S_X,S_{X'})$. Since the composition  $p\circ f\colon\t X\to X'$ is also an irreducible map, there exists a split monomorphism $j\colon\t X\to Y'$ such that $p\circ f=g'\circ j$. Hence we have the following commutative diagram
$$\xymatrix{
 & (-,\tau X) \ar^{f_*}[r]\ar^{j_*}[d] & (-,Y) \ar^{g_*}[r]\ar^{p_*}[d] & (-,X) \ar^\pi[r] & S_X \ar[r] & 0 \\
(-,\tau X') \ar^{f'_*}[r] & (-,Y') \ar^{g'_*}[r] & (-,X') \ar^{\pi '}[r] & S_{X'} \ar[r] & 0
  }$$
and we get
$$\phi\circ f_*=\pi '\circ p_*\circ f_*=\pi '\circ g'_*\circ j_*=0.$$
If there would be a factorisation
$$\xymatrix{
(-,Y) \ar^{g_*}[r]\ar_\phi[d] & (-,X) \ar^\psi@{-->}[ld] \\
S_{X'}
  }$$
then with the same arguments as used before we get factorisations
$$\xymatrix{
& (-,X) \ar^\phi[d] \ar@{-->}_{(-,\a)}[dl] & (-,X') \ar^{\pi'}[ld] \ar@{-->}_{(-,\b)}[l]\\
(-,X') \ar^{\pi '}[r] & S_{X'}
  }$$
and conclude that $X'$ is a direct summand of $X$. Since $X$ is indecomposable, we have $X'\cong X$ in contradiction to the existence of an irreducible map $X'\to X$.
% $$\xymatrix{
% (-,Y) \ar^{(-,g)}[r]\ar_{(-,p)}[d] & (-,X) \ar_a@{-->}[d]\ar_b@{-->}[ld] \\
% (-,X') \ar^{\pi '}[r] & S_{X'}
%   }$$
% then $b\circ(-,g)\circ(-,i)=(-,p)\circ (-,i)=\id$. So $(-,X')$ would be a direct summand of $(-,X)$ hence $X'\cong X$ since $X$ is indecomposable. This contradicts our assumption $X' \not \cong X$ and it follows $\Ext^1_{\widehat\T}(S_X,S_{X'})\neq 0$.

% If $\a\colon Y\to X'$ would not be a split epi then there would be a factorisation $\gamma$
% $$\xymatrix{
% && Y \ar^\a[d] \ar@{-->}_\gamma[dl]\\
% \tau X' \ar[r] & Y' \ar^{g'}[r] & X' \ar[r] &
%   }$$
% This gives $\phi=\pi'(-,\a)=\pi'(-,g')(-,\gamma)=0$ since we have $\pi'(-,g')=0$. This contradicts $\phi\neq 0$ and it follows that $\a\colon Y\to X'$ is a split epi. Hence $X'$ is a direct summand of $Y$ and there exists an irreducuble map $X'\to X$.
% $$\xymatrix{
% & & 0 \ar[r] & (-,\tau X) \ar^{(-,f)}[r] & (-,Y) \ar^{(-,g)}[r]\ar^\phi[d] \ar@{-->}^{(-,\a)}[ld] \ar@{-->}_{(-,\gamma)}[lld] & (-,X) \ar^\pi[r] & S_X \ar[r] & 0 \\
% 0 \ar[r] & (-,\tau X') \ar^{(-,f')}[r] & (-,Y') \ar^{(-,g')}[r] & (-,X') \ar^{\pi '}[r] & S_{X'} \ar[r] & 0
%   }$$
\end{proof}

\newpage

\section{Some open questions}

In this section we discuss some questions related to the material in the previous sections that might be interesting to find answers for. 

%We first give a list of questions and then commend on some of them.

% \begin{enumerate}
% \item Is there a tame wild dichotomy for the number of Auslander-Reiten components?
% \item Is there a topological interpretation (beside the connection to cohomology dimensions) for the results on the structure of the Auslander-Reiten quiver?
% \item What about more general differential graded algebras?
% \begin{enumerate}
% \item Is there a connection between the Gorenstein property and the existence of Auslander-Reiten triangles more generally?
% \item What can one say about the structure of the Auslander-Reiten quiver of differential graded algebra $A$ such that $\D^c(A)$ has Auslander-Reiten triangles but $A$ not necessary simply connected?
% \end{enumerate}
% \end{enumerate}
% 
% ad (1): 

\subsection{Discrete/wild dichotomy}

The following schema and the comments on it below give an overview on the connections between the Auslander-Reiten quiver and cohomology dimensions of a simply connected Gorenstein differential graded algebra $A$ of finite type that we would expect.

%\begin{conj}
\begin{enumerate}
\item[(i)] $\dim\Ho^eA\leq 1\text{ for all } e\in\mathbb Z$
	\begin{enumerate}
	\item $\dim\Ho^*A=1:\quad\quad\text{discrete family of trivial components}$
	\item $\dim\Ho^*A=2:\quad\quad\text{finite family of }\mathbb ZA_\infty\text{ components}$
	\item $\dim\Ho^*A=3:\quad\quad\text{discrete family of }\mathbb ZA_\infty\text{ components }$
	\item $\dim\Ho^*A>3:\quad\quad \text{$n$-parameter families of }\mathbb ZA_\infty\text{ components for all }n$
	\end{enumerate}
% 	\begin{tabular}{ccccc}
% case & $\dim\Ho^*A$ & shape & number & type\\[0.5ex]\hline
% a) & $1$ & trivial & discrete & trivial\\[0.5ex]
% b) & $2$ & $\mathbb ZA_\infty$ & finite & finite\\[0.5ex]
% c) & $3$ & $\mathbb ZA_\infty$ & discrete & discrete\\[0.5ex]
% d) & $>3$ & $\mathbb ZA_\infty$ & n-parameter family & wild\\
% 	\end{tabular}
\item[(ii)] $\dim\Ho^eA>1\text{ for some } e\in\mathbb Z\quad\text{$n$-parameter families of }\mathbb ZA_\infty\text{ components for all }n$
\end{enumerate}
%\end{conj}

The case (i.a) is the trivial simply connected case where the Auslander-Reiten quiver has countably many components each of them consisting of just a single vertex.

The case (i.b) is the case of the spheres that has been calculated by J\o rgensen in \cite{J1}.%(characteristic zero?)

The case (i.c): We have constructed a discrete family of $\mathbb ZA_\infty$-components in Theorem~\ref{thm:noc}. It is not clear to us whether this is really a discrete phenomenon or whether parameter families do exist? That it is a discrete phenomenon would be true if the following holds.
\begin{conj}
If $\dim\Ho^eA\leq 1$ for all $e\in\mathbb Z$ and $\dim\Ho^*A=3$, then for each $n\in\mathbb N$ there are only a finite number of non-isomorphic indecomposable objects in $\D^c(A)$ that have the value $n$ under the additive function $f$. In particular, the Auslander-Reiten quiver of $\D^c(A)$ consists only of countably many components.
\end{conj}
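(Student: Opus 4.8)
The plan is to reduce the statement to a finiteness assertion about minimal semi-free resolutions, then to an orbit-counting problem, isolating the one genuinely hard point. Replace $A$ by a finite dimensional model. Poincaré duality together with $\dim\Ho^*A=3$ and $\dim\Ho^eA\le 1$ forces $\Ho^*A$ to be supported in three degrees symmetric about $d/2$, so $\Ho^*A\cong k[x]/(x^3)$ with $\deg x=e$ and $d=\sup A=2e$. Note first that since $f\circ\Sigma=f$, the objects $\Sigma^{m(d-1)}M$ all share the value $f(M)$; I read the statement up to the shift (equivalently, up to $\tau=\Sigma^{d-1}$), which is the reading under which the concluding count of components is meaningful. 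By Lemma~\ref{lem:sfr} every compact $M$ has a minimal semi-free resolution $L$, unique up to isomorphism in $\mathbf C(A)$, and for minimal semi-free modules isomorphism in $\D(A)$ coincides with dg-isomorphism. As $f(M)=\dim\Ho^*(L\otimes_Ak)$ counts the shifted free generators of $L$, classifying indecomposable $M$ with $f(M)=n$ up to shift amounts to classifying indecomposable minimal semi-free $L$ with exactly $n$ generators, up to shift and dg-isomorphism. Granting this, the final clause follows: each $\mathbb ZA_\infty$ component has a unique $\tau$-orbit on which $f$ attains its minimal value $v\ge 1$, so the number of components with a given $f$-minimum $v$ is bounded by the (finite) number of shift-orbits of indecomposables of value $v$, and summing over $v\in\mathbb N$ yields countably many components.

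Next I would bound the generators. Write $L^\natural\cong\bigoplus_{j=1}^n\Sigma^{-a_j}A^\natural$ and normalise by a shift so that $\min_ja_j=0$; minimality gives $\inf\Ho^*L=\min_ja_j$. The differential is a matrix $(\mu_{ij})$ with $\mu_{ij}\in A^{a_j+1-a_i}$, so $\mu_{ij}=0$ unless $0\le a_j-a_i\le d-1$: a nonzero entry only connects generators whose degrees differ by at most $d-1$. If the generators split into two subsets with no nonzero entry between them, then $L$, hence $M$, decomposes; so for $M$ indecomposable the generator graph is connected, whence $\max_ja_j\le(n-1)(d-1)$. Thus only finitely many degree sequences $(a_1,\dots,a_n)$ occur, and it suffices to bound, for each fixed sequence, the number of indecomposables.

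For a fixed degree sequence the admissible differentials form the affine variety $V=\{(\mu_{ij})\in\prod_{i,j}A^{a_j+1-a_i}\mid d^2=0\}$, cut out by Maurer--Cartan equations, and two points give isomorphic dg-modules exactly when they lie in one orbit of the group $G$ of degree-preserving automorphisms of $L^\natural$, a linear algebraic group. Equivalently, ordering generators by degree exhibits $L$ as an iterated cone: $L_{\le m}$ is obtained from $L_{\le m-1}$ by coning off a map $\Sigma^{-a_m-1}A\to L_{\le m-1}$, i.e.\ a class in $\Ho^{a_m+1}(L_{\le m-1})$, and the isomorphism type of $L_{\le m}$ depends only on the $\Aut_{\D(A)}(L_{\le m-1})$-orbit of that class (up to scalar). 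So the whole count is governed, stage by stage, by the orbits of $\Aut_{\D(A)}(L_{\le m-1})$ on the cohomology spaces $\Ho^*(L_{\le m-1})$, in close parallel with the cone constructions and the endomorphism-ring computation of Proposition~\ref{prop:local}.

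The remaining, and genuinely hard, point is to show these orbit spaces are finite — equivalently, that $G$ acts on $V$ with finitely many orbits of indecomposable points. This is exactly where $\dim\Ho^eA\le 1$ must enter: when $\dim\Ho^eA\ge 2$ the construction of Theorem~\ref{thm:noc}(2) already produces a $\mathbb P^1(k)$-family of non-isomorphic indecomposables with the same $f$-value, a positive-dimensional family of orbits, so no purely formal argument can succeed. The approach I would pursue exploits that $k[x]/(x^3)$ is a representation-finite (Nakayama) algebra: passing to an $A_\infty$-minimal model on $\Ho^*A$ and reading the attaching data as a matrix problem over $k[x]/(x^3)$, one expects the relevant spaces of multiplication-by-$x$ data to be one-dimensional at each stage, degenerating the classification to a string-type (finite) problem rather than a Kronecker (tame or wild) one; the cleanest route would be a Koszul-dual equivalence $\D^c(A)\simeq\D^f(B)$ with $B$ representation-finite, in analogy with the sphere case, followed by a Harada--Sai type boundedness argument. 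Making this rigorous is the crux, and is precisely the step left open here — the principal danger being that a non-formal $A_\infty$-structure with cohomology $k[x]/(x^3)$ could reintroduce higher-dimensional attaching spaces and thereby spoil the finiteness.
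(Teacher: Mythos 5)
The statement you were asked to prove is not a theorem of the paper: it appears there as an open \emph{conjecture}, and the author explicitly remarks that even for the simplest algebra of this class, $A=k[x]/(x^3)$ with $\deg x=2$, the assertion is not clear, and that ``one somehow would have to classify the indecomposable compact objects over this differential graded algebra.'' Your proposal, by your own admission, does not close this. Everything up to the orbit-counting reformulation is reduction; the decisive step --- that for each fixed degree sequence the group $G$ of graded automorphisms acts on the Maurer--Cartan variety $V$ with finitely many orbits of indecomposable points, equivalently that the stagewise orbit spaces of $\Aut_{\D(A)}(L_{\le m-1})$ acting on $\Ho^{*}(L_{\le m-1})$ are finite --- is precisely the content of the conjecture, and you leave it open. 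The routes you sketch (an $A_\infty$-minimal model on $\Ho^*A\cong k[x]/(x^3)$ reducing the attaching data to a string-type matrix problem, or a Koszul-dual equivalence $\D^c(A)\simeq\D^f(B)$ with $B$ representation-finite followed by a Harada--Sai bound) are plausible programmes, not arguments, and you correctly name the obstruction yourself: a non-formal $A_\infty$-structure with cohomology $k[x]/(x^3)$ could reintroduce higher-dimensional attaching spaces. Note moreover that the sphere argument does not transport directly: there the Koszul dual was the polynomial ring $B=k[y]$, whereas the Yoneda algebra of $k$ over $k[x]/(x^3)$ is no longer of this form, so even identifying a suitable $B$ is part of the open problem.

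That said, your preparatory reductions are sound and consistent with the paper's framework: the passage to a finite dimensional model; the identification $\Ho^*A\cong k[x]/(x^3)$ with $d=2e$ forced by Poincar\'e duality; the use of minimal semi-free resolutions, whose uniqueness and the equality of $f(M)$ with the number of free generators are exactly Lemma~\ref{lem:sfr} and the lemma in Section~3.3; the connectivity bound $\max_j a_j\le(n-1)(d-1)$ on degree sequences (using $A^1=0$ and $\sup A=d$); and the derivation of the ``in particular'' clause from the finiteness claim via additivity of $f$ on $\mathbb ZA_\infty$ components. One genuine point in your favour: you observe that the conjecture as literally stated must be read up to shift, since $f\circ\Sigma=f$ makes $\{\Sigma^mM\}_{m\in\mathbb Z}$ an infinite family of pairwise non-isomorphic indecomposables with the same $f$-value; your normalisation repairs this. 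But a reduction plus a named hard step is not a proof: the gap is the finiteness of the orbit spaces, i.e.\ the classification itself, which neither you nor the paper establishes.
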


But even for the simplest differential graded algebra of this class $A=k[x]/(x^3)$ with $\deg x=2$ this conjecture is not clear to us. One somehow would have to classify the indecomposable compact objects over this differential graded algebra.

The case (i.d): We give some examples where we have constructed $n$-parameter families for all $n$. But we do not have a uniform way to construct such parameter families.

\begin{exm}
Consider the differential graded algebra $A=k[x,y]/(x^2,y^2)$ with $\deg x=2$ and $\deg y=4$. The underlying complex of $A$ is
%We can think of $A$ in the following form
%$$A\colon\quad k\to 0\to k\to 0\to k\to 0\to k.$$
$$\xymatrix@=4mm{
A\colon & k \ar[r] & 0 \ar[r] & k \ar[r] & 0 \ar[r] & k \ar[r] & 0 \ar[r] & k\\
& 1 && x && y && xy
}.$$
In characteristic zero this is a finite dimensional model for the singular cochain differential graded algebra of the product space of spheres $S^2\times S^4$ (see Example~\ref{exm:sphere}).
Now consider the following differential graded $A$-module.
$$\xymatrix@=4mm{
&&& k \ar[r]\ar[rd]^>>{\la_1}\ar[rddd]^{\la_2} & 0 \ar[r] & k \ar[r]\ar[rddd]^{\la_2} & 0 \ar[r] & k \ar[r]\ar[rd] & 0 \ar[r] & k\\
&& k \ar[r]\ar[rd] & 0 \ar[r] & k \ar[r] & 0 \ar[r] & k \ar[r]\ar[rd] & 0 \ar[r] & k\\
& k \ar[r]\ar[rd] & 0 \ar[r] & k \ar[r] & 0 \ar[r] & k \ar[r]\ar[rd] & 0 \ar[r] & k\\
k \ar[r] & 0 \ar[r] & k \ar[r] & 0 \ar[r] & k \ar[r] & 0 \ar[r] & k
}$$
The last mapping cone construction `on top' gives the possibility to get a $\mathbb P_1(k)$-parameter family of Auslander-Reiten components. An iteration of this construction gives a $2$-parameter family and so on. Hence for the Auslander-Reiten quiver of this algebra there exist $n$-parameter families of components.
\end{exm}

\begin{exm}
Consider the differential graded algebra $A=k[x]/(x^4)$ with $\deg x=2$.  The underlying complex of $A$ is
%We can think of $A$ in the following form
$$\xymatrix@=4mm{
A\colon & k \ar[r] & 0 \ar[r] & k \ar[r] & 0 \ar[r] & k \ar[r] & 0 \ar[r] & k\\
& 1 && x && x^2 && x^3
}.$$
In characteristic zero this is a finite dimensional model for the singular cochain differential graded algebra of the complex projective space $\mathbb CP^3$. This is true since $\mathbb CP^3$ is a formal space and hence $C^*(\mathbb CP^3;k)$ is weakly equivalent to its cohomology algebra $\Ho^*(\mathbb CP^3;k)$. 
Now consider the following differential graded $A$-module.
$$\xymatrix@=4mm{
&&&&&&&&&&& k \ar[r]\ar[rd]^>>{\la_1}\ar[rdd]^>>{\la_2} & 0 \ar[r] & k \ar[r]\ar[rd]^>>{\la_1} & 0 \ar[r] & k \ar[r] & 0 \ar[r] & k\\
&&&&&&&& k \ar[r]\ar[rddd] & 0 \ar[r] & k \ar[r] & 0 \ar[r] & k \ar[r] & 0 \ar[r] & k\\
&&&&&& k \ar[r]\ar[rd]\ar[rdd] & 0 \ar[r] & k \ar[r]\ar[rd]\ar[rdd] & 0 \ar[r] & k \ar[r]\ar[rd] & 0 \ar[r] & k\\
&&&&& k \ar[r]\ar[rdd] & 0 \ar[r] & k \ar[r] & 0 \ar[r] & k \ar[r] & 0 \ar[r] & k\\
&&& k \ar[r]\ar[rd] & 0 \ar[r] & k \ar[r]\ar[rd] & 0 \ar[r] & k \ar[r] & 0 \ar[r] & k\\
k \ar[r] & 0 \ar[r] & k \ar[r] & 0 \ar[r] & k \ar[r] & 0 \ar[r] & k
}$$
or the following minimal semi-free resolution of it.
$$\xymatrix@=4mm{
&&&&&&&&&&& k \ar[r]\ar[rd]^>>{\la_1}\ar[rdd]^>>{\la_2} & 0 \ar[r] & k \ar[r]\ar[rd]^>>{\la_1} & 0 \ar[r] & k \ar[r] & 0 \ar[r] & k\\
&&&&&&&& k \ar[r]\ar[rddd] & 0 \ar[r] & k \ar[r] & 0 \ar[r] & k \ar[r] & 0 \ar[r] & k\\
&&&&&& k \ar[r]\ar[rd]^>>1\ar[rdd]^>>2 & 0 \ar[r] & k \ar[r]\ar[rd]\ar[rdd] & 0 \ar[r] & k \ar[r]\ar[rd] & 0 \ar[r] & k\\
&&&&& k \ar[r] & 0 \ar[r] & k \ar[r] & 0 \ar[r] & k \ar[r] & 0 \ar[r] & k\\
&&& k \ar[r]\ar[rd] & 0 \ar[r] & k \ar[r]\ar[rd] & 0 \ar[r] & k \ar[r] & 0 \ar[r] & k\\
k \ar[r] & 0 \ar[r] & k \ar[r] & 0 \ar[r] & k \ar[r] & 0 \ar[r] & k
}$$
The last mapping cone construction `on top' gives the possibility to get a $\mathbb P_1(k)$-parameter family of Auslander-Reiten components. An iteration of this construction gives a $2$-parameter family and so on. Hence for the Auslander-Reiten quiver of this algebra there exist $n$-parameter families of components for all $n$.
\end{exm}

The case (ii): We have constructed $n$-parameter families of $\mathbf ZA_\infty$-components for all $n$ in Theorem~\ref{thm:noc}.

\subsection{Topological interpretation}
The connections of the structure of the Auslander-Reiten quiver to the cohomology dimensions that we expect have been discussed before. But we also ask if there are any further topological interpretations for the results on the structure of the Auslander-Reiten quiver? Does it makes sense to talk about `finite', `discrete'/`tame', `wild' spaces?% We want to encourage topologists to think of this questions.

\subsection{More general differential graded algebras}
Let $A$ be a differential graded algebra that is not necessarily simply connected. What can one say about the existence of Auslander-Reiten triangles and the structure of the Auslander-Reiten quiver in that case?

\subsubsection{The Gorenstein property}
As already mentioned in Remark~\ref{rem:gor}(1)  for an ordinary $k$-algebra $A$ seen as differential graded algebra concentrated in degree zero the third condition in Proposition~\ref{prop:gor} says that $A$ and $A^\op$ have finite injective dimension. This is the definition of a Gorenstein algebra given by Auslander in \cite{A}. Furthermore, for an augmented simply connected differential graded algebra $A$ of finite type one equivalent condition in Theorem~\ref{thm:poincare} for the existence of Auslander-Reiten triangles in $\D^c(A)$ was the Gorenstein condition given by Avramov-Foxby in \cite{AF}. In Remark~\ref{rem:gorfj} we mentioned another Gorenstein definition given by Frankild-J\o rgensen in \cite{FJ}, namely that the functor $$\RHom_A(-,A)\colon\D^{f,b}(A)\to\D^{f,b}(A^\op)$$ is a duality. The question arises if some Gorenstein definition is related to the existence of Auslander-Reiten triangles also in a more general setup. Looking at the definition of Frankild-J\o rgensen we only see that the equivalent conditions in Proposition~\ref{prop:gor} imply that the functor $\RHom(-,A)\colon\D(A)\to\D(A^\op)$ restricts to $\D^f(A)$ and hence also to $\D^{f,b}(A)$:

\begin{lem}\label{lem:^f}
Let $A$ be a differential graded $k$-algebra and $DA\in\D^c(A^\op)$, then the functor $\RHom_A(-,A)\colon\D(A)\to\D(A^\op)$ restricts to a functor $\D^f(A)\to\D^f(A^\op)$.
\end{lem}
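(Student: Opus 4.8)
The plan is to reduce the statement to two standard adjunction isomorphisms together with a thick-subcategory argument that exploits the compactness of $DA$. Throughout I use that $k$ is a field, so the duality $D=\Hom_k(-,k)$ is exact and satisfies $\Ho^n(DZ)\cong D\Ho^{-n}(Z)$ for every $Z$. Consequently $D$ restricts to a contravariant duality between $\D^f(A)$ and $\D^f(A^\op)$, and the biduality map $Z\to D(DZ)$ is a quasi-isomorphism as soon as $Z$ has degreewise finite-dimensional cohomology. In particular (recalling that $A$ has degreewise finite-dimensional cohomology) the biduality morphism of $A$-bimodules $A\to D(DA)$ is a quasi-isomorphism; this is the one place where degreewise finiteness of $\Ho^*A$ enters, and I would flag it explicitly.

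The main computation is then, for $X\in\D^f(A)$,
$$\RHom_A(X,A)\cong\RHom_A(X,D(DA))\cong D(X\otimes_A^L DA).$$
The first isomorphism is induced by the biduality quasi-isomorphism $A\to D(DA)$ above, and the second is the derived tensor–hom adjunction
$$\RHom_k(X\otimes_A^L N,k)\cong\RHom_A(X,\RHom_k(N,k))$$
applied with $N=DA$, noting $\RHom_k(DA,k)=D(DA)$. Here I would carry out the left/right bookkeeping carefully: $X$ is a right $A$-module and $DA$ an $A$-bimodule, so $X\otimes_A^L DA$ is again a right $A$-module, i.e. an object of $\D(A)$, and applying $D$ yields an object of $\D(A^\op)$, matching the target of $\RHom_A(X,A)$.

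It remains to prove that $X\otimes_A^L DA$ lies in $\D^f(A)$, which is exactly where the hypothesis $DA\in\D^c(A^\op)=\langle A\rangle_{\rm thick}$ is used. I would consider the exact functor $X\otimes_A^L(-)\colon\D(A^\op)\to\D(k)$; it sends the free module $A$ to $X\otimes_A^L A\cong X$, which has degreewise finite-dimensional cohomology. The full subcategory of those $M\in\D(A^\op)$ with $X\otimes_A^L M$ of degreewise finite-dimensional cohomology is a thick subcategory containing $A$, hence contains $\langle A\rangle_{\rm thick}$ and in particular $DA$. Thus $X\otimes_A^L DA$ has degreewise finite-dimensional cohomology, so $X\otimes_A^L DA\in\D^f(A)$, and therefore $\RHom_A(X,A)\cong D(X\otimes_A^L DA)\in\D^f(A^\op)$ by the duality in the first paragraph.

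The genuinely delicate points, rather than the formal manipulations, are (i) securing the biduality quasi-isomorphism $A\simeq D(DA)$, which forces the underlying standing assumption that $\Ho^*A$ is degreewise finite-dimensional, and (ii) keeping the bimodule structures straight so that each isomorphism is one of objects of $\D(A^\op)$ and not merely of underlying complexes of $k$-vector spaces. Both adjunction isomorphisms and the thick-subcategory step are then routine once these bookkeeping issues are settled.
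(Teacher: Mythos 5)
Your proof is correct, and it is essentially the dual formulation of the paper's own argument: the same two ingredients (biduality and a thick-subcategory argument driven by $DA\in\D^c(A^\op)=\langle A\rangle_{\rm thick}$) appear in both, but they are deployed on opposite sides of an adjunction. The paper writes $\RHom_A(M,A)\cong\RHom_A(DDM,DDA)\cong\RHom_{A^\op}(DA,DM)$ and then runs the d\'evissage on the contravariant functor $\RHom_{A^\op}(-,DM)$, which sends $A$ to $DM\in\D^f(A^\op)$ and hence sends all of $\langle A\rangle_{\rm thick}$, in particular $DA$, into $\D^f(A^\op)$; you instead run the d\'evissage on the covariant functor $X\otimes_A^L-$ and dualize at the end. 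These two routes are interchanged precisely by the adjunction $D(X\otimes_A^L N)\cong\RHom_{A^\op}(N,DX)$ that you invoke, so the logical content is the same, and neither is more general than the other. Two minor points of comparison: (i) your explicit flagging of the biduality quasi-isomorphism $A\to DDA$ is a genuine improvement in exposition, since the paper uses $DDA\cong A$ (and $DDM\cong M$) silently, and this is exactly where the standing assumption of degreewise finite dimensional cohomology, inherited from the context of Proposition~\ref{prop:gor}, enters; (ii) your care with bimodule structures, while sound, is not strictly needed for the conclusion: $\RHom_A(X,A)$ carries its natural $\D(A^\op)$-structure in any case, and membership in $\D^f(A^\op)$ is a condition on the cohomology of the underlying complex of $k$-vector spaces, so isomorphisms in $\D(k)$ already suffice to transfer it.
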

\begin{proof}
Let $M\in\D^f(A)$. From $\RHom_{A^\op}(A,DM)\cong DM\in\D^f(A^\op)$ it follows that the functor $\RHom_{A^\op}(-,DM)$ sends compact objects to objects in $\D^f(A^\op)$. Since $$\RHom_A(M,A)\cong\RHom_A(DDM,DDA)\cong\RHom_{A^\op}(DA,DM)$$
we get the assertion.
\end{proof}

%We have asked before whether there is a connection between the Gorenstein property and the existence of Auslander-Reiten triangles more generally?

%ad (3a):
\subsubsection{The structure of the Auslander-Reiten quiver}
%ad (3b): 
What can we say about the structure of the Auslander-Reiten quiver of differential graded algebra $A$ such that $\D^c(A)$ has Auslander-Reiten triangles but $A$ is not necessary simply connected? For example let $A$ be a differential graded algebra as in Remark~\ref{rem:CY}, i.e.\ $\Ho^nA$ is finite dimensional over $k$ for all $n\in\mathbb N$ and $\D^c(A)$ is a Calabi-Yau category. In this situation one does not have the additive function $f=\dim\Ho^*\RHom_A(-,k)$ (see Section~3) that was used by J\o rgensen to analyse the structure of the Auslander-Reiten components in the simply connected case. One possible approach to replace it might be to consider the assignment $M\mapsto\lev(M)=\lev_{\D(A)}^A(M)$ taken from \cite{ABYM}. Here $\lev(M)$ counts the number of steps that are necessary to build $M$ from $A$ via triangles. More precisely, denote by $\thick^0_{\D(A)}(A)=\{0\}$ and by $\thick_{\D(A)}^1(A)$ the smallest strict full subcategory of $\D(A)$ which contains $A$ and is closed under taking finite coproducts, retracts, and all shifts. Inductively let $\thick_{\D(A)}^n(A)$ be the smallest strict full subcategory of $\D(A)$ which is closed under retracts and contains objects $X$ which admit an exact triangle
$$X_1\to X\to X_2\to \Sigma X_1$$
in $\D(A)$ with $X_1\in \thick_{\D(A)}^{n-1}(A)$ and $X_2\in \thick_{\D(A)}^1(A)$. Note that $\thick_{\D(A)}^n(A)$ is also closed under shifts and finite coproducts. Define
$$\lev\colon\D(A)\to\mathbb N_0\cup\{\infty\},\quad M\mapsto\lev(M):=\inf\{n\in\mathbb N_0\mid M\in \thick_{\D(A)}^n(A)\}.$$
This function has the following properties.
\begin{enumerate}
\item An object $M\in\D(A)$ is compact if and only if $\lev(M)<\infty$. This suggests to call an object $M\in\D^c(A)$ {\em finitely built} from $A$.
\item The function $\lev$ is subadditive on triangles. It is not clear to us whether it is additive on Auslander-Reiten triangles or not.
%\item $\lev_{\D(A)}^A$ is a subadditive function on on each connected component of the AR-quiver. (Additive?)
\item The function $\lev$ is bounded on the class of all (indecomposable) compact objects if and only if the dimension $\dimt\D^c(A)$ of the triangulated category $\D^c(A)$ is finite. Here the dimension of a triangulated category $\T$ as defined by Rouquier in \cite{Rq} is $$\dimt\T:=\inf\{n\mid\exists X\in\T \text{ s.t. }\lev_{\T}^X(M)\leq n \text{ for all } M\in\T\}.$$
\end{enumerate}
\begin{proof}
(1) $\D^c(A)=\langle A\rangle_{\rm thick}=\bigcup_{n\in\mathbb N_0}\thick_{\D(A)}^n(A)$\\
(2) One uses the octahedral axiom.%S. Iyengar
\\
(3) If $\dimt\D^c(A)=n<\infty$, then there exists some $X\in\D^c(A)$ such that
%$$\dim\D^c(A)=\inf\{n\mid\exists X\in\D^c(A) \text{ s.t. }\lev_{\D(A)}^X(M)\leq n \text{ for all } M\in\D^c(A)\}$$
 all $M\in\D^c(A)$ can be built from $X$ in less or equal $n$ steps. Since we have $\lev_{\D(A)}^A(X)<\infty$ and 
$$\lev_{\D(A)}^A(M)\leq\lev_{\D(A)}^X(M)\lev_{\D(A)}^A(X)\leq n\lev_{\D(A)}^A(X)$$
for all $M\in\D^c(A)$ we have $n\cdot\lev_{\D(A)}^A(X)$ as a bound for $\lev_{\D(A)}^A$ restricted to $\D^c(A)$. If on the other hand $\lev_{\D(A)}^A$ is bounded on the set of all indecomposable compact objects by some positive integer $n$, then by definition it is also bounded by $n$ on the class of all compact objects. Then $\dimt\D^c(A)\leq n$ is finite.
\end{proof}

In the simply connected situation where we have the function $f$ it holds

\begin{lem}
Let $A$ be an augmented simply connected differential graded algebra of finite type, and $M\in\D^f(A)$ with $\inf\Ho^*M>-\infty$, then $f(M)\geq\lev_{\D(A)}^A(M)$.% In particular, If $\dim\D^c(A)=\infty$ then $f$ is unbounded on the class of all indecomposable compact objects.
\end{lem}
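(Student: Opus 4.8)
The plan is to read off both $f(M)$ and $\lev_{\D(A)}^A(M)$ from a single minimal semi-free resolution of $M$. Since $\inf\Ho^*M>-\infty$ and each $\Ho^nM$ is finite dimensional, Lemma~\ref{lem:sfr} gives a minimal semi-free resolution $L\xrightarrow{\sim}M$ equipped with a semi-free filtration $0=L_{-1}\subseteq L_0\subseteq L_1\subseteq\cdots$ whose subquotients $L_t/L_{t-1}$ are finite direct sums of shifts of copies of $A$, and with $L^\natural\cong\coprod_j\Sigma^{-j}(A^\natural)^{(\beta_j)}$ for finite $\beta_j$. As $L\cong M$ in $\D(A)$ we have $\lev_{\D(A)}^A(M)=\lev_{\D(A)}^A(L)$, and by minimality the differential of $\H om_A(L,k)$ vanishes, so that $f(M)=\dim\Ho^*\RHom_A(M,k)=\dim\H om_A(L,k)=\sum_j\beta_j$ is exactly the total number of free generators of $L$, as recorded in the computation following Theorem~\ref{thm:ZA}.

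If $f(M)=\infty$ the inequality is vacuous, so I may assume $\sum_j\beta_j<\infty$. Then $L$ has only finitely many free generators, hence is finitely built from $A$, so $M$ is compact and the filtration terminates; let $N$ denote the number of its nonzero subquotients. Each such subquotient is a finite direct sum of shifts of $A$ and therefore lies in $\thick_{\D(A)}^1(A)$; moreover it is graded-free, so each inclusion $L_{t-1}\hookrightarrow L_t$ is semi-split and hence induces an exact triangle $L_{t-1}\to L_t\to L_t/L_{t-1}\to\Sigma L_{t-1}$ in $\D(A)$. An induction on $t$ using the definition of $\thick_{\D(A)}^n(A)$ then shows $L_t\in\thick_{\D(A)}^{t+1}(A)$, so that $L=L_{N-1}\in\thick_{\D(A)}^N(A)$ and $\lev_{\D(A)}^A(M)\leq N$. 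Since each of the $N$ nonzero subquotients contributes at least one free generator while the total generator count is $\sum_j\beta_j=f(M)$, we have $N\leq f(M)$, and combining the two estimates yields $\lev_{\D(A)}^A(M)\leq N\leq f(M)$.

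The step that needs the most care is the passage from the semi-free filtration to the level estimate: one must check that the subquotients really lie in $\thick_{\D(A)}^1(A)$ (using finiteness of the $\beta_j$, equivalently of each $\gamma_j,\delta_j$ in Lemma~\ref{lem:sfr}), that the filtration inclusions are semi-split so that they produce genuine exact triangles rather than mere short exact sequences of differential graded modules, and that the number of filtration steps is bounded by the generator count. The identification $f(M)=\sum_j\beta_j$ and the reduction to the compact case are then routine consequences of minimality together with Lemma~\ref{lem:sfr}.
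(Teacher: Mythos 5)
Your proof is correct and follows essentially the same route as the paper's own (very terse) argument: finiteness of $f(M)$ forces the minimal semi-free filtration from Lemma~\ref{lem:sfr} to terminate, and since each nonzero subquotient contributes at least one free generator, $M$ is built from $A$ in at most $f(M)$ steps. Your write-up just makes explicit the details the paper leaves implicit — the identification $f(M)=\sum_j\beta_j$ via minimality, the semi-split inclusions giving exact triangles, and the induction showing $L_t\in\thick_{\D(A)}^{t+1}(A)$ — all of which are carried out correctly.
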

\begin{proof}
If $f(M)$ is finite, then there exists a minimal semi-free resolution $F\to M$ as in Lemma~\ref{lem:sfr} with semi-free filtration of $F$ that terminates after finitely many steps. Then $M$ can be built from $A$ in less or equal than $f(M)$ steps.
\end{proof}

Even in the simply connected situation we do not know whether the function $\lev$ could equally be considered instead of $f$ or not. We would like to know whether $\lev$ is additive on Auslander-Reiten triangles or unbounded on Auslander-Reiten components. For the singular cochain differential graded algebra of a sphere we are able to show this: Recall the notations $A=k[x]/(x^2)$ with $\deg x=d$, $B=k[y]$ with $\deg y=-d+1$, and the equivalence $\D^c(A)\simeq\langle k\rangle_{\thick}\subseteq\D(B)$ from Example~\ref{exm:sphere}. Further recall from \cite{J1} that $\Sigma^jB/(y^m)$, $j\in\mathbb Z,~m\in\mathbb N_0$ are representatives of the isomorphism classes of indecomposable objects in $\langle k\rangle_{\thick}\subseteq\D(B)$. Note that under the equivalence these objects are mapped to the objects $C_n$ in $\D^c(A)$ that are built via the $n$-times iterated mapping cone constructions of the first kind as described in Section~4. For example $C_2$ is the object that may be visualised as follows.
$$\xymatrix@=4mm{
&&&&&& k \ar[r]\ar[rd] & 0 \ar[r] & \cdots \ar[r] & 0 \ar[r] & k \\
&&& k \ar[r]\ar[rd] & 0 \ar[r] & \cdots \ar[r] & 0 \ar[r] & k \\
k \ar[r] & 0 \ar[r] & \cdots \ar[r] & 0 \ar[r] & k
}$$

%but in the more general situation it is not clear to us whether this function is bounded or not on Auslander-Reiten components.

\begin{prop}\label{prop:lev}
Let $A=k[x]/(x^2)$ with $\deg x=d$ and the $C_n$ be the representatives of the isomorphism classes of indecomposable objects in $\D^c(A)$ described above. Then
$$\lev(C_n)=n+1\text{ for all } n\in\mathbb N_0.$$
%$$f\colon\D^c(S^d)\to\mathbb Z,\text{ and }\lev\colon\D^c(S^d)\to\mathbb Z$$ coincide.
In particular, the functions $\lev$ and $f$ coincide on indecomposable compact objects in $\D(k[x]/(x^2))$, and $\lev$ is an additive unbounded function on each Auslander-Reiten component of $\D^c(k[x]/(x^2))$.
\end{prop}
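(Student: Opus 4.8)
The plan is to establish the two inequalities $\lev(C_n)\le n+1$ and $\lev(C_n)\ge n+1$ separately. The upper bound comes for free: $A=k[x]/(x^2)$ is an augmented simply connected differential graded algebra of finite type and each $C_n$ lies in $\D^f(A)$ with $\inf\Ho^*C_n=0>-\infty$, so the lemma immediately preceding this proposition yields $\lev(C_n)\le f(C_n)$, and it was computed in Section~4 that $f(C_n)=n+1$. (Concretely, the semi-free filtration $A=C_0\subset C_1\subset\dots\subset C_n$ already exhibits $C_n$ as built from $A$ in $n+1$ steps.) Thus the entire content is the lower bound $\lev(C_n)\ge n+1$, i.e.\ that $C_n\notin\thick^n_{\D(A)}(A)$.

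For the lower bound I would invoke the \emph{ghost lemma} (cf.\ \cite{ABYM}): if $g_1,\dots,g_n$ are $A$-ghosts in $\D(A)$, meaning morphisms inducing the zero map on $\Hom_{\D(A)}(\Sigma^i A,-)=\Ho^i(-)$ for all $i$, and if the source of their composite lies in $\thick^n_{\D(A)}(A)$, then $g_n\circ\dots\circ g_1=0$. So it suffices to produce a nonzero composite of $n$ ghosts beginning at $C_n$. The cleanest way to do this is to pass through the equivalence $\D^c(A)\simeq\langle k\rangle_{\thick}\subseteq\D(B)$ of Example~\ref{exm:sphere}, where $B=k[y]$ with $\deg y=-d+1$; it carries the generator $A$ to $k_B$ and sends $C_n$ to $\Sigma^jB/(y^{n+1})$ for some $j$ (recall $C_0=A\mapsto k_B=B/(y)$). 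Since a triangle equivalence taking $A$ to $k_B$ preserves $\thick^n$ and hence identifies $\lev^A_{\D(A)}$ with $\lev^{k}_{\D(B)}$ on corresponding objects, and both functions are shift invariant, it is enough to bound $\lev^{k}_{\D(B)}(B/(y^{n+1}))$ from below.

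Over $B$ the natural candidate ghost is multiplication by $y$, which is a morphism $\mu_y\colon\Sigma^{d-1}B/(y^{n+1})\to B/(y^{n+1})$. To check that $\mu_y$ is a $k$-ghost one computes $\Ext^*_B(k,B/(y^{n+1}))$ from the Koszul resolution $0\to\Sigma^{d-1}B\xrightarrow{y}B\to k\to 0$: this gives $\Ext^0_B(k,B/(y^{n+1}))\cong(y^n)/(y^{n+1})\cong k$ and $\Ext^1_B(k,B/(y^{n+1}))\cong B/(y)\cong k$, and on each of these one-dimensional spaces post-composition with $\mu_y$ acts as zero, because $y\cdot(y^n)=0$ in $B/(y^{n+1})$ and $y\cdot N\subseteq yN$. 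Hence $\mu_y$ induces the zero map on $\Hom_{\D(B)}(\Sigma^i k,-)$ for all $i$, so it is a ghost, as is every shift of it. The $n$-fold composite $\mu_y^n$ is multiplication by $y^n$, which is nonzero on $B/(y^{n+1})$ since $y^n\notin(y^{n+1})$; its source $\Sigma^{n(d-1)}B/(y^{n+1})$ has the same level as $B/(y^{n+1})$. Feeding this nonzero composite of $n$ ghosts into the ghost lemma forces $\lev^{k}_{\D(B)}(B/(y^{n+1}))\ge n+1$, hence $\lev(C_n)\ge n+1$, and together with the upper bound $\lev(C_n)=n+1$. I expect the verification that $\mu_y$ is a ghost---the $\Ext$ computation showing that $y$ annihilates both $\Ext^0$ and $\Ext^1$---to be the crux of the argument; everything else is formal.

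For the concluding assertions: every indecomposable compact object of $\D(A)$ is a shift $\Sigma^jC_n$, and since both $\lev$ and $f$ are $\Sigma$-invariant and equal $n+1$ there, they agree on all indecomposable compact objects. The additivity of $f$ on Auslander-Reiten triangles (the relation among its values at the indecomposable vertices of each almost split triangle, as used by J\o rgensen) therefore transfers verbatim to $\lev$; and since $\lev(C_n)=n+1\to\infty$, while $f$---equivalently $\lev$---is already known to be unbounded on each $\mathbb ZA_\infty$ component, $\lev$ is an additive unbounded function on every Auslander-Reiten component of $\D^c(k[x]/(x^2))$.
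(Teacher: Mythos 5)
Your proof is correct, and it settles the crux --- the lower bound $\lev(C_n)\ge n+1$ --- by a different mechanism than the paper's official proof. Both arguments pass through the equivalence $\D^c(A)\simeq\langle k\rangle_{\thick}\subseteq\D(B)$ and the identification of $C_n$ with $B/(y^{n+1})$ up to shift; but at that point the paper finishes by quoting the filtration characterisation of level (an object has $\lev^k\le l$ iff it is a retract of a dg module carrying a finite $k$-filtration of length $\le l$, the analogue of \cite[Theorem~4.2]{ABYM}) together with the uniserial submodule structure of $B/(y^{n+1})$, a one-sentence step that is left rather implicit. You instead take the upper bound from $\lev\le f$ and obtain the lower bound from the ghost lemma applied to multiplication by $y$, whose ghost property you verify by the explicit $\Ext_B(k,-)$ computation (both the socle and the top of $B/(y^{n+1})$ are killed by $y$) and whose $n$-fold composite $y^n$ is visibly nonzero on cohomology. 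This is in substance a rigorous, transported version of the alternative proof the paper sketches immediately after its own (attributed to Oppermann, via Lemma~\ref{lem:Op}): there the ghosts are built in $\D(A)$ as cones of the canonical maps $A\oplus\Sigma^{-\sup C_n}A\to C_n$, and they play exactly the role of your $\mu_y$. What your route buys is transparency at the only delicate point: the obstruction forcing the level up is exhibited concretely (a dg module with a $k$-filtration of length $l$ is annihilated by $y^l$, hence so is any retract of it, while $y^n\neq 0$ on $B/(y^{n+1})$) rather than hidden in ``we know the submodules''. Two small remarks: the ghost lemma in the generality you use (ghosts with respect to the generator $k$ rather than the free module) is not literally the statement in \cite{ABYM}, but it is standard and, as you indicate, also follows by transporting the paper's Lemma~\ref{lem:Op} along the equivalence; and your reading of additivity as the vertex-wise relation on the Auslander-Reiten quiver is the right one, since $\lev$ is not additive under direct sums (one has $\lev(X\oplus Y)=\max\{\lev X,\lev Y\}$), which is why the concluding assertion must go through the agreement of $\lev$ and $f$ on indecomposables, exactly as you argue.
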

Before we give the proof we first have a look at some results from \cite{ABYM}. I thank Srikanth Iyengar for advising me some of the methods in this article. Let $M$ and $C$ be differential graded $A$-modules. A {\em $C$-filtration} of $M$ is a filtration of submodules 
$$0=M^0\subset M^1\subset\cdots\subset M^l=M$$
with consecutive quotients isomorphic to a direct sum of shifts of copies of $C$. The $C$-filtration is called {\em finite} if in the quotients just finite direct sums are involved. If the module $C$ is homotopically projective, then with the same proof as \cite[Theorem~4.2]{ABYM} we have that $\lev_{\D(A)}^C(M)\leq l$ if and only if $M$ is retract of some differential graded $A$-module with finite $C$-filtration of length less or equal than $l$. 
%the minimal length of a $C$-filtration of 
%$M$ and $C$ $\loew_A^S(M)$ the $C$-Loewey-length of $M$ to be the smallest possible length of a submodule filtration of a module $M'$ that has $M$ as a direct summand with quotients isomorphic to a direct sum of shifts of copies of $C$. If the module $C$ is homotopically projective then with the same proof as \cite{ABYM}[Theorem~4.2] we have $\lev_A^C(M)=

\begin{proof}[proof of Proposition~\ref{prop:lev}]
Using the equivalence and remarks above we calculate
$$\lev_{\D(A)}^A(C_n)=\lev_{\D(B)}^k(C_n\otimes^L_Ak)=\lev_{\D(B)}^k(B/(y^{n+1}))=n+1.$$%=loew_B^k(B/T^{n+1})
Here the last equation holds since we know how the submodules of the uniserial differential graded $B$-module $B/(y^n)$ look like.
\end{proof}

As I learned from Steffen Oppermann, another possibility for proving the previous lemma is to use the following version of \cite[Lemma~4.11]{Rq} (see also \cite[Lemma~3.1]{Op}).
\begin{lem}\label{lem:Op}
Let $A$ be a differential graded algebra and
$$M\to N_0\overset{f_1}\to N_1\overset{f_2}\to\cdots\overset{f_d}\to N_d$$
be a sequence of morphisms in $\D(A)$ such that the composition $f_d\circ f_{d-a}\circ\cdots\circ f_1$ is non-zero, and $\Ho^*f_i=0$ for all $i=1,\cdots,d$. Then $\lev(M)>d$.
\end{lem}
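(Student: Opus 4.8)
The key to this statement is to recognise the hypothesis $\Ho^*f_i=0$ as the assertion that each $f_i$ is a \emph{ghost} with respect to the generator $A$. Indeed, since $\Ho^nX\cong\Hom_{\D(A)}(\Sigma^{-n}A,X)$ naturally, the condition $\Ho^*f_i=0$ is equivalent to $\Hom_{\D(A)}(\Sigma^nA,f_i)=0$ for all $n\in\mathbb Z$. Thus the statement is a version of Rouquier's ghost lemma adapted to the generator $A$, and I would prove it in contrapositive form: if $\lev(M)\leq d$, then for every morphism $e\colon M\to N_0$ and every composable sequence of ghosts $f_1,\dots,f_d$, the total composite $f_d\circ\cdots\circ f_1\circ e$ vanishes. (Here I read the nonzero composite in the statement as the full composite $M\to N_d$, since the initial map $e$ is not assumed to be a ghost and so must be included for the conclusion to involve $M$ at all.)

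The plan is to induct on $d$. For the base case $d=1$, an object $M$ with $\lev(M)\leq 1$ is a retract of a finite coproduct $P=\coprod_i\Sigma^{a_i}A$; writing $M\xrightarrow{s}P\xrightarrow{r}M$ with $rs=\id_M$, one has $f_1e=(f_1er)s$, and the restriction of $f_1er$ to each summand $\Sigma^{a_i}A$ equals $f_1\circ(er\iota_i)$, which lies in the image of $\Hom_{\D(A)}(\Sigma^{a_i}A,f_1)=0$. Since a map out of a coproduct is determined by its restrictions to the summands, $f_1er=0$ and hence $f_1e=0$.

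For the inductive step I would first reduce to the case in which $M$ genuinely admits a defining triangle rather than being merely a retract of such an object: if $M$ is a retract of $M'$, one transports the sequence along the retraction $M'\to M$, using that ghosts are stable under pre- and post-composition, and deduces the vanishing for $M$ from that for $M'$. Then, writing a defining triangle $M_1\xrightarrow{u}M\xrightarrow{v}M_2\to\Sigma M_1$ with $M_1\in\thick_{\D(A)}^{d-1}(A)$ and $M_2\in\thick_{\D(A)}^1(A)$, I would apply the inductive hypothesis to $M_1$ with the map $eu$ and the first $d-1$ ghosts to get $f_{d-1}\circ\cdots\circ f_1\circ e\circ u=0$. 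This vanishing lets me factor the partial composite $f_{d-1}\circ\cdots\circ f_1\circ e$ through $v$, say as $h\circ v$ with $h\colon M_2\to N_{d-1}$; and then the base case applied to $M_2$ (with the map $h$ and the single remaining ghost $f_d$) yields $f_d\circ h=0$, whence the full composite $f_d\circ\cdots\circ f_1\circ e=f_d\circ h\circ v=0$, completing the induction.

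I expect the only real subtlety to be bookkeeping: tracking that the initial map $M\to N_0$ does \emph{not} count toward the $d$ ghost steps, and that the definition of $\thick^n_{\D(A)}(A)$ only produces $M$ up to a retract, so the retract reduction must genuinely precede the triangle argument. Once the reformulation in terms of ghosts is in place, each step is a direct consequence of the defining property of a ghost together with the factorisation furnished by the octahedral/triangle structure, with no serious analytic content.
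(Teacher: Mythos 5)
Your proof is correct, but there is nothing in the paper to compare it against: the paper states Lemma~\ref{lem:Op} without proof, quoting it as a version of \cite[Lemma~4.11]{Rq} (see also \cite[Lemma~3.1]{Op}), and its own computation of $\lev(C_n)$ (Proposition~\ref{prop:lev}) proceeds instead via $C$-filtrations and the equivalence $\D^c(A)\simeq\langle k\rangle_{\rm thick}\subseteq\D(B)$. What you have reconstructed is essentially the standard ghost-lemma argument underlying the cited results: the identification $\Ho^nX\cong\Hom_{\D(A)}(\Sigma^{-n}A,X)$ turns the hypothesis $\Ho^*f_i=0$ into the statement that postcomposition with $f_i$ annihilates every map from a shift of $A$; the base case handles retracts of finite coproducts of shifts of $A$; and the inductive step uses the weak-cokernel property of the defining triangle $M_1\to M\to M_2\to\Sigma M_1$ to factor the partial composite through $M_2\in\thick^1_{\D(A)}(A)$. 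Two points of care in your write-up deserve emphasis, because the paper's statement is loose: first, the composite must indeed be read as the full composite starting at $M$ (the printed $f_d\circ f_{d-a}\circ\cdots\circ f_1$ contains a typo, and read literally as starting at $N_0$ the lemma would be false --- take $M=0$ with $\lev(M)=0$ and any nonvanishing composite of ghosts, such as the one the paper itself constructs from the maps $C_n\to\Sigma^{-d+1}C_n$); second, since each $\thick^n_{\D(A)}(A)$ is by definition only closed under retracts of objects admitting a defining triangle, your preliminary retract reduction is genuinely needed and is correctly placed \emph{before} the triangle argument. Both subtleties are handled correctly, so the argument is complete and supplies a proof the paper merely outsources.
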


Given the object $C_n\in\D^c(k[x]/(x^2))$ from above we consider the canonical morphism
$$\psi\colon A\oplus\Sigma^{-\sup C_n}A\to C_n,$$
%{-(n+1)d+n}
and take the mapping cone $C(\psi)$.
$$A\oplus\Sigma^{-\sup C_n}A\overset\psi\to C_n\overset\pi\to C(\psi)\to\Sigma(A\oplus\Sigma^{-\sup C_n}A)$$
Note that for all $i\in\mathbb Z$ any non-zero morphism $\Sigma^iA\to C_n$ factors through $\psi$, hence $\Ho^*\pi=0$. The object $C(\psi)$ is quasi-isomorphic to $\Sigma^{-d+1}C_n$. We take $f_1$ to be the composition
$$f_1\colon C_n\overset\pi\to C(\psi)\overset\cong\to\Sigma^{-d+1}C_n.$$
Then we continue in the same way with the shifted object $\Sigma^{-d+1}C_n$ and construct $$f_2\colon\Sigma^{-d+1}C_n\to\Sigma^{-2d+2}C_n$$
and so on. One calculates $f_n\circ f_{d-1}\circ\cdots\circ f_1\neq 0$ and gets $\lev(C_n)>n$ from Lemma~\ref{lem:Op}.

%?? $\lev_{\D(A)}^A$ is unbounded on each connected component of the AR-quiver. ??

%Gorenstein\\
%level\\
%distinction of the two cases\\
%interpretation in terms of properties of the topological spaces\\

\newpage
\appendix

\section{The derived category of a differential graded algebra}

In this appendix we give some background material about differential graded algebras and their derived categories. For the convenience of the reader a few proofs are included. For further details we refer to \cite{Ke}, \cite{AFH} and \cite{K2}.

\subsection{Definition of a differential graded algebra and its derived category}

Let $k$ be a field. %A {\em {\mathbb Z}-graded algebra} over $k$ is a coproduct of vectorspaces $A=\coprod_{i\in\mathbb Z}A^i$ together with a homogeneous bilinear {\multiplication} map $A\times A\to A$. For a homogeneous.
Let $\GAlg$ be the category of $\mathbb Z$-graded algebras over $k$. Let $A\in\GAlg$. For a {\em homogeneous} element $a\in A^n$ we denote $|a|$ for its {\em degree} $n$.

A {\em differential graded algebra} over $k$ is a $\mathbb Z$-graded algebra $A=\coprod_{i\in\mathbb Z}A^i$ together with a homogeneous $k$-linear differential $d\colon A\to A$ (i.e.\ $d$ is of degree $+1$ and $d^2=0$) satisfying the {\em Leibniz rule}
$$d(ab)=d(a)b+(-1)^nad(b), \text{ for all elements } a\in A^n,b\in A.$$

\begin{exm}\label{exm:endom}
(1) Let $R$ be a ring. The endomorphism differential graded algebra $\E nd_R(X)$ of a complex of $R$-modules $X$ is the complex that consists in degree $i$ of the set of $R$-linear homomorphisms of graded modules $\Sigma^{-i}X\to X$, and differential given by $df:=d\circ f-(-1)^if\circ d$. Together with multiplication by composition this gives a differential graded algebra.\\
(2) Let $V$ be a graded $k$-vectorspace. The (graded) tensor algebra $TV$ of $V$ is defined as
$$TV=\coprod_{n\in\mathbb N_0}V^{\otimes n}$$
where $V^{\otimes n}$ is the $n$-times tensor product of $V$ over $k$. The degree of some element $x_1\otimes x_2\otimes\cdots\otimes x_n\in V^{\otimes n}$ where the $x_i$ are homogeneous elements in $V$ is the sum of the degrees of the elements $x_i\in V$, and the multiplication in $TV$ is given by $x\cdot y:=x\otimes y$ for $x,y\in TV$. Any differential $d\colon V\to TV$ extends uniquely to a differential on $TV$ by defining on homogeneous elements
$$d(x_1\otimes\cdots\otimes x_n):=\sum_{i=1}^n (-1)^{|x_1\otimes\cdots\otimes x_{i-1}|} x_1\otimes\cdots\otimes d(x_i)\otimes\cdots\otimes x_n.$$
Hence any complex $V$ of $k$-vectorspaces gives rise to a differential graded tensor algebra $TV$.\\
(3) Consider the differential graded tensor algebra $TV$ from (2). The elements
$$x\otimes y-(-1)^{|x||y|}y\otimes x,\text{ with }x,y\in V\text{ homogeneous}$$
generate a {\em differential graded ideal} $I$ of $TV$, i.e.\ a subspace that is closed under multiplication with elements from $TV$ from the left and the right, and closed under applying the differential. The quotient differential graded algebra $FV:=TV/I$ is the {\em free commutative graded algebra}.\\
(4) %Given a $k$-vectorspace (without grading) one can define on the graded tensor algebra $TV$ of $V$, where one consideres the whole space $V$ to be concentrated in degree $1$, a differential of degree $-1$ via
%$$d(x_1\otimes\cdots\otimes x_n):=\sum_{i=1}^n (-1)^{i-1} x_1\otimes\cdots\otimes x_{i-1} \otimes x_{i+1}\otimes\cdots\otimes x_n.$$ Hence one gets a differential graded chain algebra that we denote by $T'V$. 
Let $V$ be a $k$-vectorspace viewed as graded vectorspace concentrated in degree $-1$ and $d\colon V\to k$ a $k$-linear map. Consider the differential graded tensor algebra $TV$ from (2). The elements $x\otimes x,~x\in V$ generate a differential graded ideal $J$ of $TV$ and the quotient differential graded chain algebra $EV=TV/{J}$ is the {\em exterior algebra}. If $k$ is more generally a commutative ring, then for example the Koszul complex of some given elements in $k$ can be interpreted as exterior algebra. Alternatively one could take a graded vectorspace $V$ without differential and gets the exterior algebra $EV$ with trivial differential.\\
(5) The singular cochain differential graded algebra of a topological space (see Appendix~C).
\end{exm}

The {\em opposite} differential graded algebra $A^\op$ of a differential graded algebra $A$ is the differential graded algebra with the same structure as $A$ but the multiplication is given by
$$a\cdot_\op a'=(-1)^{nm}a'a,\text{ for all elements } a\in A^n,a'\in A^m.$$
We call a differential graded algebra $A$ {\em (graded) commutative} if $A=A^\op$, i.e.\ 
$$aa'=(-1)^{nm}a'a,\text{ for all elements } a\in A^n,a'\in A^m.$$

A {\em morphism} of differential graded algebras is a morphism of the underlying graded algebras of degree zero that commutes with the differential. We denote the category of differential graded algebras by $\DGAlg$ and the category of commutative differential graded algebras by $\CDGAlg$.  A morphism of differential graded algebras is a {\em quasi-isomorphism} if it is an isomorphism in cohomology. We say that two differential graded algebras $A,A'$ are {\em weakly equivalent} in $\DGAlg$ (respectively $\CDGAlg$) if there is a zig-zag of quasi-isomorphisms in $\DGAlg$ (respectively $\CDGAlg$) between them.
$$\xymatrix@=5mm{
A \ar[rd] && A_2 \ar[rd]\ar[ld] && \ar[ld] & \dots & A_{n-1} \ar[rd]\ar[ld]  && A' \ar[ld] \\
& A_1 && A_3 & \dots &&& A_{n}
}$$
%Let
%$$(-)^\natural\colon \DGAlg\to\GAlg$$
%be the forgetful functor that assigns to a differential graded algebra its underlying graded algebra.

A {\em differential graded (right) module} over a differential graded algebra $A$ is a graded right module $M=\coprod_{i\in\mathbb Z}M^i$ over the underlying graded algebra of $A$ together with a homogeneous $k$-linear differential $d\colon M\to M$ satisfying the {\em Leibniz rule}
$$d(xa)=d(x)a+(-1)^nxd(a), \text{ for all elements } x\in M^n,a\in A.$$

A {\em differential graded left module} over a differential graded algebra $A$ is a graded left module $M=\coprod_{i\in\mathbb Z}M^i$ over the underlying graded algebra of $A$ together with a homogeneous $k$-linear differential $d\colon M\to M$ satisfying the {\em Leibniz rule}
$$d(ax)=d(a)x+(-1)^nad(x), \text{ for all elements } x\in M,a\in A^n.$$
Equivalently, a differential graded left $A$-module $M$ corresponds to a morphism of differential graded algebras $A\to\E nd_k(M)$, where $M$ is a complex of $k$-vectorspaces.
Differential graded left $A$-modules can be identified with differential graded $A^\op$-modules via the sign convention $$ax=(-1)^{mn}xa, \text{ for all elements } x\in M^m,a\in A^n.$$

A morphism $M\to N$ of differential graded $A$-modules is a morphism of degree $0$ of the underlying graded modules over the underlying graded algebra of $A$ that commutes with the differential.
We denote the category of all differential graded $A$-modules by $\mathbf C(A)$. 

Let
$$(-)^\natural\colon \DGAlg\to\GAlg$$
be the forgetful functor that assigns to a differential graded algebra its underlying graded algebra and denote in the same way the forgetful functor that assigns to a differential graded module over a differential graded algebra its underlying graded module over the underlying graded algebra
$$(-)^\natural\colon \mathbf C(A)\to\mathbf C_{gr}(A^\natural).$$
%If it is clear that we consider just the underlying graded structure, then we sometimes wave to write $(-)^\natural$.
By a morphism $f\colon M\to N$ of degree $m$ of graded modules $M,N$ over a graded algebra $A$ we mean a $k$-linear homogeneous map of degree $m$ satisfying
$$f(xa)=f(x)a\text{ for all elements }a\in A,x\in M.$$
For left modules this becomes
$$f(ax)=(-1)^{mn}af(x)\text{ for all elements }a\in A^n,x\in M.$$

A morphism $f\colon M\to N$ between differential graded modules is {\em null-homotopic} if there is a morphism of the underlying graded modules $h\colon M^\natural\to N^\natural$ of degree $-1$ such that
$$f=d\circ h+h\circ d.$$

The {\em homotopy category} of differential graded $A$-modules denoted by $\mathbf H(A)$ is the quotient of $\mathbf C(A)$ with respect to the ideal of all null-homotopic morphisms. The homotopy category has the structure of a triangulated category (see \cite{V} or \cite{N} for the definition and properties of a triangulated category). Given a morphism $f\colon M\to N$ of differential graded modules the usual mapping cone construction gives a differential graded module $C(f)$ that may be visualised as follows.
$$\xymatrix{
\cdots\ar[r]^{-d_M^{i-2}}\ar[rd]_<<<{f^{i-2}} & M^{i-1} \ar[r]^{-d_M^{i-1}}\ar[rd]_<<<{f^{i-1}} & M^i \ar[r]^{-d_M^i}\ar[rd]_<<<{f^i} & M^{i+1} \ar[r]^{-d_M^{i+1}}\ar[rd]_<<<{f^{i+1}} & M^{i+2} \ar[r]^{-d_M^{i+2}} \ar[rd]_<<<{f^{i+2}}& \cdots \\
\cdots\ar[r]^<<<<{d_N^{i-3}} & N^{i-2}\ar[r]^<<<<{d_N^{i-2}} & N^{i-1} \ar[r]^<<<<{d_N^{i-1}} & N^i \ar[r]^<<<<<<{d_N^i} & N^{i+1} \ar[r]^<<<{d_N^{i+1}} & \cdots 
}$$
The mapping cone $C(f)$ fits into an exact triangle
$$M\overset f\to N\to C(f)\to\Sigma M$$
in the homotopy category, and the class of triangles isomorphic to these give all exact triangles of $\mathbf H(A)$. Following \cite{H} and \cite{Ke} one could also view $\mathbf H(A)$ as the stable category of the Frobenius category $\mathbf C(A)$, where the exact structure on $\mathbf C(A)$ is given by {\em semi-split} exact sequences, i.e.\ exact sequences of differential graded $A$-modules that split as sequences of the underlying graded modules.

A morphism $f\colon M\to N$ between differential graded modules is a {\em quasi-isomorphism} if it induces an isomorphism in cohomology.

The derived category $\mathbf D(A)$ is the localisation of $\mathbf H(A)$ with respect to the class of all quasi-isomorphisms. The derived category has a triangulated structure that is induced by the triangulated structure of $\mathbf H(A)$. The canonical localisation functor $\mathbf H(A)\overset{\rm can}\to\mathbf D(A)$ is an exact functor that sends all quasi-isomorphisms to isomorphism and is universal with with property.% (see section~A.2 for the definition of an exact functor) .%(see \cite{GZ})

% Recall here that an {\em exact} functor $\T\to \U$ between triangulated categories $\T,\U$ with suspension functors $\Sigma_\T,\Sigma_\U$ is a pair consisting of an additive functor $F$ and a natural isomorphism $\eta\colon F\circ\Sigma_\T\to\Sigma_\U\circ F$ such that every exact triangle $X\overset\a\to Y\overset\b\to Z\overset\g\to\Sigma X$ in $\T$ gives an exact triangle in $\U$
% $$\xymatrix@=10pt{
% FX\ar[rr]^{F\a} && FY \ar[rr]^{F\b} && FZ \ar@{-->}[rr]^{\eta_X\circ F\g}\ar[rd]_{F\g} && \Sigma (FX)\\
% &&&&& F(\Sigma X)\ar[ru]_{\eta_X}
% }.$$

We will denote the full subcategories of acyclic objects by superscript $ac$, the full subcategories of objects with degreewise finite dimensional cohomology by superscript $f$, the full subcategories of objects with bounded cohomology by superscript $b$, and the full subcategory of compact objects by superscript $c$, e.g.\ $\D^c(A)$ for the full subcategory of compact objects in the derived category.
%$\mathbf C^{ac}(A)$, $\mathbf H^{ac}(A)$ respectively,the full subcategory of objects with degreewise finite dimensional cohomology by $\mathbf C^{f}(A)$, $\mathbf H^{f}(A)$, $\mathbf D^{f}(A)$ respectively, and the full subcategory of compact objects by $\D^{c}(A)$.% $\D^{c}(A)$ $\D(A)^{c}$ $\D(A)_{c}$

% Let
% %$$\Sigma\colon \DGAlg\to\DGAlg,\quad\Sigma\colon\mathbf C(A)\to\mathbf C(A)$$
% $$\Sigma\colon\mathbf C(A)\to\mathbf C(A)$$
% be the {\em shift} autoequivalences that assigns to each differential graded object $X$ its {\em shift by 1}. This means the underlying complex is shifted by $1$, i.e. 
% $$(\Sigma X)^i=X^{i+1},~d_{\Sigma X}=-d_X,$$
% and the action of $A$ on $\Sigma M$ is as before on $M$.
% 
% For left $A$-modules the action of $A$ on the shifted module $\Sigma M$ gets a sign, namely
% $$~ax:=(-1)^{n}ax,\text{ for all elements }a\in A^n,x\in \Sigma M.$$ 
% Since $\Sigma$ respects homotopies and quasi-isomorphisms, $\Sigma$ induces shift functors on $\mathbf H(A)$ and $\mathbf\D(A)$ and coincides there with the suspension functor of the triangulated structure.% and also on the equivalence classes of differential graded algebras that we also denote by $\Sigma$.

Let $A,B$ be differential graded $k$-algebras. The {\em tensor product differential graded algebra} $A\otimes_kB$ is the complex that consists in degree $i$ of the $k$-vectorspace
$$\coprod_{p+q=i}A^p\otimes_k B^q,$$ and differential that is in degree $i$ given by 
$$d^i(a\otimes b):=d(a)\otimes b+(-1)^i a\otimes d(b)\text{ for all elements } a\in A^p,b\in B^q, p+q=i.$$
Together with the multiplication
$$(a\otimes b)(a'\otimes b'):=(-1)^{mn}aa'\otimes bb'\text{ for all elements } a\in A,a'\in A^m,b\in B^n,b'\in B$$
this gives a differential graded algebra.

An {\em $(A,B)$-bimodule} $M$ is by definition a differential graded $A\otimes_k B$-module. Equivalently, this is simultaneously a differential graded $A$ and $B$-module $M$ such that
$$(xb)a=(-1)^{mn}(xa)b\text{ for all elements } a\in A^m,b\in B^n,x\in M.$$
If we write for an $(A^{\op},B)$-bimodule the $A^\op$-module structure as $A$-module structure from the left, then the formula becomes
$$a(xb)=(ax)b\text{ for all elements } a\in A,b\in B,x\in M.$$
%Clearly, $A$ and $DA=\Hom_k(A,k)$ are $(A^\op,A)$-bimodules. Also n
Note that an $A$-module is always an $(A,A^c)$-bimodule, where
$A^c$
%:=\{a\mid a\in A^m\text{ for some }m\in\mathbb Z,ab-(-1)^{mn}ba=0\text{ for all elements } b\in A^n\}$$
is the {\em graded centre} of $A$, i.e.\ the homogeneous elements $a\in A^c$ are those satisfying $ab-(-1)^{|a||b|}ba=0$ for all homogeneous elements $b\in A$.

\subsection{Functors} 

Let $X$ be a complex of $k$-vectorspaces
$$X\colon~\cdots\longrightarrow X^{i-1}\overset {d^{i-1}}\longrightarrow X^{i}\overset{d^i}\longrightarrow X^{i+1}\longrightarrow\cdots.$$
The {\em $i$-th cohomology} $\Ho^i X$ of $X$ is the factor space $\Ker d^i/{\Im d^{i-1}}$. We denote the $\mathbb Z$-graded object $\coprod_{i\in\mathbb Z}\Ho^iX$ by $\Ho^*X$ and consider the {\em cohomology functors}
$$\Ho^*\colon\DGAlg\to\GAlg~\text{ and }~ \Ho^*\colon\mathbf C(A)\to\mathbf C_{\mathbf gr}(\Ho^*A).$$
Since $\Ho^*$ sends null-homotopies to zero and by definition quasi-isomorphisms to isomorphisms, the latter cohomology functor extends to $\mathbf H(A)$ and $\D(A)$. The functors $\Ho^*\colon\mathbf H(A)\to\mathbf C_{\mathbf gr}(\Ho^*A)$ and $\Ho^*\colon\D(A)\to\mathbf C_{\mathbf gr}(\Ho^*A)$ are {\em cohomological}, i.e. they send exact triangles to exact sequences.

Recall that an {\em exact} functor $\T\to \U$ between triangulated categories $\T,\U$ with suspension functors $\Sigma_\T,\Sigma_\U$ is a pair consisting of an additive functor $F$ and a natural isomorphism $\eta\colon F\circ\Sigma_\T\to\Sigma_\U\circ F$ such that every exact triangle $X\overset\a\to Y\overset\b\to Z\overset\g\to\Sigma X$ in $\T$ gives an exact triangle in $\U$
$$\xymatrix@=10pt{
FX\ar[rr]^{F\a} && FY \ar[rr]^{F\b} && FZ \ar@{-->}[rr]^{\eta_X\circ F\g}\ar[rd]_{F\g} && \Sigma (FX)\\
&&&&& F(\Sigma X)\ar[ru]_{\eta_X}
}.$$
If we talk about functors between triangulated categories, then we mean exact functors unless otherwise stated.
%\subsubsection{shift}

The {\em shift} autoequivalence
$$\Sigma\colon\mathbf C(A)\to\mathbf C(A)$$
assigns to each differential graded object $X$ its {\em shift by 1}. This means that the underlying complex is shifted by $1$, i.e.\ 
$$(\Sigma X)^i=X^{i+1},~d_{\Sigma X}=-d_X,$$
and the action of $A$ on $\Sigma M$ is as before on $M$.
For left $A$-modules the action of $A$ on the shifted module $\Sigma M$ gets a sign, namely
$$~ax:=(-1)^{n}ax,\text{ for all elements }a\in A^n,x\in \Sigma M.$$ 
Since $\Sigma$ respects homotopies and quasi-isomorphisms, $\Sigma$ induces shift functors on $\mathbf H(A)$ and $\mathbf\D(A)$ and coincides on these categories with the suspension functor of the triangulated structure.

Given a differential graded $A$-module $M$. Let $DM=\Hom_k(M,k)$ be the {\em $k$-dual} of $M$, i.e.\ the differential graded $A^\op$-module that is given by
$$(DM)^n=\Hom_k(M^{-n},k),\quad d^n_{DM}=(-1)^{n+1}\Hom_k(d_M^{-(n+1)},k),$$
and multiplication
$$(a\cdot f)(x)=(-1)^{ln+lm}f(xa)\text{ for all elements } a\in A^l, x\in M^m, f\in (DM)^n.$$
If we start with a left module, then we get an action from the right via
$$(f\cdot a)(x)=f(ax)\text{ for all elements } a\in A,f\in DM, x\in M.$$
Hence we have the {\em $k$-duality functor} 
$$D\colon\mathbf C(A)^\op\to\mathbf C(A^\op)$$ that preserves null-homotopic morphisms and quasi-isomorphisms. Therefore, it induces exact functors on the level of the corresponding homotopy categories and derived categories that will also be denoted by $D$. There are isomorphisms in on the level of the categories of differential graded modules, homotopy categories and derived categories $$\Hom(X,DY)\cong\Hom(Y,DX)$$ that are natural in $X$ and $Y$ (see Lemma~\ref{lem:kduality}). The $k$-duality functor $D$ restricts to a contravariant functor $\D^f(A)\to\D^f(A^\op)$. In this case $D$ is a duality. Also note that $DA$ is an $(A^\op,A)$-bimodule.

Let $M,N$ be differential graded $A$-modules. The {\em total-$\Hom$} of $M$ and $N$ is the $\mathbb Z$-graded object
$\H om_A(M,N)$ that consists in degree $i$ of the set of $A$-linear homomorphism $\Sigma^{-i}M^\natural\to N^\natural$ of the underlying graded modules. %:=\coprod_{i\in\mathbb Z}\Hom_{A^\natural}(M^\natural,\Sigma^iN^\natural)$$
Together with the differential that is in degree $i$ given by
$$d^i(f):=d_N\circ f-f\circ d_{\Sigma^{-i}M}=d_N\circ f-(-1)^if\circ d_M.$$
this becomes a differential graded $k$-module. We get a bifunctor
$$\H om_A(-,-)\colon\mathbf C(A)^\op\times\mathbf C(A)\to\mathbf C(k)$$
that preserves homotopies, so we get an induced bifunctor on the homotopy category
$$\H om_A(-,-)\colon\mathbf H(A)^\op\times\mathbf H(A)\to\mathbf H(k).$$
Observe that taking cycles and accordingly cohomology of $\H om_A(M,N)$ in degree $i$ gives
$$\rm Z^i\H om_A(M,N)=\Hom_{\mathbf C(A)}(M,\Sigma^iN)$$
$$\Ho^i\H om_A(M,N)=\Hom_{\mathbf H(A)}(M,\Sigma^iN).$$

\begin{lem}\label{lem:kduality}
Let $X\in\mathbf C(A),Y\in\mathbf C(A^\op)$. Then there is an isomorphism in $\mathbf C(k)$ that is natural in $X$ and $Y$
$$\H om_A(X,DY)\cong\H om_{A^\op}(Y,DX).$$
In particular, 
$$\Hom_{\mathbf C(A)}(X,DY)\cong\Hom_{\mathbf C(A^\op)}(Y,DX),$$
$$\Hom_{\mathbf H(A)}(X,DY)\cong\Hom_{\mathbf H(A^\op)}(Y,DX).$$
% Furthermore, when we have defined the rigth derived $\Hom$-functor $\RHom(-,-)$ we will also see that
% $$\RHom_A(X,DY)\cong\RHom_{A^\op}(Y,DX)$$
% and that $\RHom_A(-,DA)$ is naturally isomorphic to the k-duality $D\colon\mathbf D(A)^\op\to\mathbf D(A^\op)$.
% \begin{enumerate}
% \item $\RHom_A(X,DY)\cong\RHom_{A^\op}(Y,DX)$.
% \item $\RHom_A(-,DA)$ is naturally isomorphic to the k-duality $D\colon\mathbf D(A)^\op\to\mathbf D(A^\op)$.
% \end{enumerate}
\end{lem}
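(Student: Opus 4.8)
The plan is to route both total-Hom complexes through the $k$-dual of a single tensor product, namely $D(X\otimes_A Y)$, by applying the graded tensor-hom adjunction twice. First recall that the relevant objects are defined: since $Y\in\mathbf C(A^\op)$ is a left $A$-module, its dual $DY$ is a right $A$-module, so $\H om_A(X,DY)$ makes sense; and since $X\in\mathbf C(A)$ gives $DX\in\mathbf C(A^\op)$, the complex $\H om_{A^\op}(Y,DX)$ is defined as well. The two ingredients I would assemble are: (i) the adjoint-associativity isomorphism of $k$-complexes $\H om_A(X,\H om_k(Y,k))\cong\H om_k(X\otimes_A Y,k)$, valid for a right $A$-module $X$ and a left $A$-module $Y$, which with $DY=\H om_k(Y,k)$ reads $\H om_A(X,DY)\cong D(X\otimes_A Y)$; and (ii) the same adjunction over $A^\op$, which gives $\H om_{A^\op}(Y,DX)\cong D(Y\otimes_{A^\op}X)$.

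It then remains to identify $X\otimes_A Y$ with $Y\otimes_{A^\op}X$. For this I would invoke the standard commutativity isomorphism of graded tensor products $X\otimes_A Y\cong Y\otimes_{A^\op}X$, $x\otimes y\mapsto(-1)^{|x||y|}y\otimes x$, which is an isomorphism of complexes of $k$-vectorspaces; here one uses precisely the identification of the right $A$-action on $X$ with a left $A^\op$-action together with the sign conventions relating $A$- and $A^\op$-modules. Applying $D$ and composing the three isomorphisms yields the asserted natural isomorphism
$$\H om_A(X,DY)\cong D(X\otimes_A Y)\cong D(Y\otimes_{A^\op}X)\cong\H om_{A^\op}(Y,DX)$$
in $\mathbf C(k)$, naturality in $X$ and $Y$ being inherited from that of each constituent isomorphism. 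Alternatively one can bypass the tensor product and write the comparison map directly, sending $f\in\H om_A(X,DY)$ to the morphism $g$ determined by $g(y)(x)=\pm f(x)(y)$ with the appropriate Koszul sign, then verifying $A^\op$-linearity and compatibility with the differentials by hand; this produces the same map.

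Finally, the two ``in particular'' statements follow by applying the cycle and cohomology functors in degree zero. By the observation recorded just before the lemma, $\mathrm Z^0\H om_A(X,DY)=\Hom_{\mathbf C(A)}(X,DY)$ and $\Ho^0\H om_A(X,DY)=\Hom_{\mathbf H(A)}(X,DY)$, and likewise over $A^\op$; since an isomorphism of $k$-complexes induces isomorphisms on cycles and on cohomology, the displayed identities for $\Hom_{\mathbf C(A)}$ and $\Hom_{\mathbf H(A)}$ drop out immediately. The only genuine obstacle throughout is the Koszul sign bookkeeping: the underlying maps are the evident ones, but one must track the signs hidden in the $A$- and $A^\op$-module structures, in the differentials of $DX$, $DY$ and of the total-Hom complexes, and in the flip isomorphism, in order to confirm that each map is simultaneously a chain map and module-linear.
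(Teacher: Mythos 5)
Your proposal is correct, and in fact there is nothing in the paper to compare it against: the lemma is stated in Appendix~A as standard background, with no proof given. Your argument is the standard one and fills that gap properly. The two adjoint-associativity isomorphisms $\H om_A(X,DY)\cong D(X\otimes_A Y)$ and $\H om_{A^\op}(Y,DX)\cong D(Y\otimes_{A^\op}X)$, combined with the signed flip $X\otimes_A Y\cong Y\otimes_{A^\op}X$, $x\otimes y\mapsto(-1)^{|x||y|}y\otimes x$, compose to exactly the direct formula $g(y)(x)=(-1)^{|x||y|}f(x)(y)$ that you offer as the alternative route, so the two versions of your proof are literally the same map. The only substance lies in the routine checks you flag: the flip respects the defining relation $xa\otimes y=x\otimes ay$ precisely because of the sign convention $a\cdot_{\op}x=(-1)^{|a||x|}xa$ identifying right $A$-modules with left $A^\op$-modules, and it commutes with the differentials; and the adjunction map $\phi\mapsto\bigl(x\mapsto\phi(x\otimes -)\bigr)$ is a chain map and $A$-linear under the paper's conventions (here one uses $d^n_{DM}=(-1)^{n+1}\Hom_k(d^{-(n+1)}_M,k)$ and $(f\cdot a)(y)=f(ay)$), with no extra signs appearing. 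Finally, your deduction of the two ``in particular'' statements by applying $\mathrm{Z}^0$ and $\Ho^0$, using the identities $\mathrm{Z}^i\H om_A(M,N)=\Hom_{\mathbf C(A)}(M,\Sigma^iN)$ and $\Ho^i\H om_A(M,N)=\Hom_{\mathbf H(A)}(M,\Sigma^iN)$ recorded just before the lemma, is exactly the intended use of that observation.
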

% \begin{proof}
% \end{proof}

\begin{rem}
When we define the right derived $\Hom$-functor $\RHom(-,-)$ at the end of this section, then from the previous lemma we will also get that
$$\RHom_A(X,DY)\cong\RHom_{A^\op}(Y,DX),$$
$$\Hom_{\D(A)}(X,DY)\cong\Hom_{\D(A^\op)}(Y,DX).$$
Furthermore, we will see that there is a natural isomorphism of functors 
$$D\cong\RHom_A(-,DA)\colon\mathbf D(A)^\op\to\mathbf D(A^\op).$$
\end{rem}

Let $M$ be an $A$-module and $N$ be a left $A$-module. The {\em total-tensor-product} $M\otimes_A N$ of $M$ and $N$ is the $\mathbb Z$-graded object that consists in degree $n$ of the $k$-vectorspace
$${(\coprod_{p+q=n}M^p\otimes_k N^q)}/\X$$
where $\X$ is the subspace generated by elements $$xa\otimes y-x\otimes ay,\text{ where }x\in M^p,y\in N^q,a\in A^r,p+q+r=n.$$
Together with the differential
$$d^i(x\otimes y):=dx\otimes y+(-1)^m x\otimes dy\text{ for all elements }x\in M^m,y\in N$$ 
this gives a differential graded $k$-module. We get a bifunctor
$$-\otimes_A-\colon\mathbf C(A)\times\mathbf C(A^\op)\to\mathbf C(k)$$
that preserves homotopies, so we get an induced bifunctor on the homotopy categories
$$-\otimes_A-\colon\mathbf H(A)\times\mathbf H(A^\op)\to\mathbf H(k).$$

Let $A,B$ be differential graded algebras, further let $M$ be an $(A^\op,B)$-bimodule, $X$ an $A$-module, and $Y$ a $B$-module. Then $X\otimes_AM$ has a $B$-module structure via
$$(x\otimes m)b=x\otimes mb\text{ for all elements }x\in X,m\in M,b\in B,$$
and we get a functor
$$-\otimes_AM\colon\mathbf C(A)\to\mathbf C(B)\text{ (respectively an exact functor }\mathbf H(A)\to\mathbf H(B)).$$
Furthermore, $\H om_B(M,Y)$ has an $A$-module structure via
$$(fa)(m)=f(am)\text{ for all elements }f\in\H om_A(M,Y),a\in A,m\in M,$$
and we get a functor
$$\H om_B(M,-)\colon\mathbf C(B)\to\mathbf C(A)\text{ (respectively an exact functor }\mathbf H(B)\to\mathbf H(A)).$$
Moreover, $\H om_B(M,-)$ is right adjoint to $-\otimes_AM$.

To define `derived' functors on the level of derived categories one needs for differential graded modules an analogue of projective (respectively injective) resolutions. A differential graded $A$-module $M$ is {\em homotopically projective} if $\H om_A(M,-)$ preserves quasi-isomorphisms. Let $\mathbf C_p(A)$ respectively $\mathbf H_p(A)$ denote the category respectively homotopy category of all homotopically projective differential graded modules.
A differential graded $A$-module $M$ is {\em homotopically injective} if $\H om_A(-,M)$ preserves quasi-isomorphisms. Let $\mathbf C_i(A)$ respectively $\mathbf H_i(A)$ denote the category respectively homotopy category of all homotopically injective differential graded modules.

\begin{lem}\label{lem:homproj} Let $M$ be a differential graded $A$-module. The following statements are equivalent
\begin{enumerate}
\item $M$ is homotopically projective.
\item[(1')] $\Hom_{\mathbf H(A)}(M,-)$ sends quasi-isomorphisms to isomorphisms, i.o.w. $M$ has in $\mathbf H(A)$ the following unique lifting property
$$\xymatrix@=5mm{
& M \ar[d]\ar@{-->}[ld]_{\exists_1} \\
X \ar[r]_{qi} & Y
}$$
\item $\H om_A(M,-)$ preserves acyclicity. (Spaltenstein \cite{S} calls $M$ {\em K-projective})
\item[(2')] $\Hom_{\mathbf H(A)}(M,\mathbf H_{\rm ac}(A))=0$
\item $M\in\langle A\rangle_{\rm loc}\subseteq\mathbf H(A)$.
\item $M$ is homotopy equivalent to a semi-free module, i.e.\ a module that has an exhaustive ascending filtration in $\mathbf C(A)$ with subquotients isomorphic to direct sums of copies of shifts of $A$.
%\item $M$ is homotopy equivalent to a module that has an ascending filtration in $\mathbf C(A)$ with subquotients isomorphic to direct summands of direct sums of copies of shifts of $A$. (Schwede)
\item $M$ is homotopy equivalent to a module that has an exhaustive ascending filtration in $\mathbf C(A)$ with subquotients isomorphic to direct summands of direct sums of copies of shifts of $A$ and the consecutive inclusions split as inclusions of graded modules. (Keller \cite{Ke} says that such a module has {\em property (P)})
\end{enumerate}
\end{lem}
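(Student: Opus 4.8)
The plan is to organise all seven conditions around condition~(2'), exploiting the identity $\Ho^i\H om_A(M,N)=\Hom_{\mathbf H(A)}(M,\Sigma^iN)$ recorded just before the lemma, so that every statement about $\H om_A(M,-)$ preserving quasi-isomorphisms or acyclicity becomes a statement about the functor $\Hom_{\mathbf H(A)}(M,-)$. First I would dispose of the block (1)$\Leftrightarrow$(1')$\Leftrightarrow$(2)$\Leftrightarrow$(2'). The equivalences (1)$\Leftrightarrow$(1') and (2)$\Leftrightarrow$(2') are immediate from the cohomology identity, since a morphism $f$ is a quasi-isomorphism precisely when every $\Sigma^if$ is one, and $N$ is acyclic precisely when every $\Sigma^iN$ is. For (1')$\Leftrightarrow$(2'): if $N$ is acyclic then $0\to N$ is a quasi-isomorphism, so (1') forces $\Hom_{\mathbf H(A)}(M,N)=0$; conversely, for a quasi-isomorphism $f\colon X\to Y$ the cone $C(f)$ is acyclic, and applying $\Hom_{\mathbf H(A)}(M,-)$ to the triangle $X\to Y\to C(f)\to\Sigma X$ together with (2') forces $f_*$ to be bijective.

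Next I would connect this block to $\langle A\rangle_{\rm loc}$. For (3)$\Rightarrow$(2') I would note that $\mathcal P:=\{M\mid\Hom_{\mathbf H(A)}(M,N)=0\text{ for every acyclic }N\}$ is a triangulated subcategory closed under coproducts, because $\Hom_{\mathbf H(A)}(\coprod M_i,-)=\prod\Hom_{\mathbf H(A)}(M_i,-)$, and that $A\in\mathcal P$ since $\Hom_{\mathbf H(A)}(A,N)\cong\Ho^0N$ vanishes for acyclic $N$; hence $\langle A\rangle_{\rm loc}\subseteq\mathcal P$. The implication (4)$\Rightarrow$(5) is trivial, semi-free being the special case of property~(P), and for (5)$\Rightarrow$(3) I would use the defining filtration $0\subseteq L(0)\subseteq L(1)\subseteq\cdots$ with graded-split inclusions: these give triangles $L(i-1)\to L(i)\to L(i)/L(i-1)\to\Sigma L(i-1)$ in $\mathbf H(A)$ whose subquotients are direct summands of coproducts of shifts of $A$, so induction, using that $\langle A\rangle_{\rm loc}$ is closed under direct summands by idempotent splitting (cf.\ \cite{N}), places every $L(i)$ in $\langle A\rangle_{\rm loc}$, and the telescope triangle $\coprod L(i)\to\coprod L(i)\to L\to\Sigma\coprod L(i)$ exhibits $L=\operatorname{hocolim}L(i)$ as an object of $\langle A\rangle_{\rm loc}$.

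The step carrying the real content is (2')$\Rightarrow$(4). Here I would invoke the existence of a semi-free resolution $p\colon P\xrightarrow{\sim}M$ for an arbitrary differential graded module, obtained by the standard transfinite construction that kills cohomology degree by degree (Lemma~\ref{lem:sfr} being its minimal refinement in the bounded-below case). Such $P$ is semi-free, hence satisfies (1') by the chain of implications already proved. Since $M$ satisfies (1') as well, applying $\Hom_{\mathbf H(A)}(M,-)$ to the quasi-isomorphism $p$ shows that $p_*\colon\Hom_{\mathbf H(A)}(M,P)\to\Hom_{\mathbf H(A)}(M,M)$ is bijective, so there is $s\colon M\to P$ in $\mathbf H(A)$ with $p\circ s=\operatorname{id}_M$; applying $\Hom_{\mathbf H(A)}(P,-)$ to $p$ and comparing $p\circ(s\circ p)=p\circ\operatorname{id}_P$ then forces $s\circ p=\operatorname{id}_P$. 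Thus $p$ is a homotopy equivalence and $M$ is homotopy equivalent to the semi-free module $P$, which closes the cycle (4)$\Rightarrow$(5)$\Rightarrow$(3)$\Rightarrow$(2')$\Rightarrow$(4) and thereby ties the remaining conditions to the first block.

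The main obstacle is precisely the existence of semi-free resolutions for modules that need not be bounded below, which is the one genuinely constructive input; everything else is formal manipulation with the cohomology identity and the long exact sequences attached to triangles. A secondary technical subtlety is that (5)$\Rightarrow$(3) requires $\langle A\rangle_{\rm loc}$ to be closed under direct summands, which I would supply from the splitting of idempotents in the triangulated category $\mathbf H(A)$, since it admits arbitrary coproducts.
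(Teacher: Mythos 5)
Your proposal is correct and follows essentially the same route as the paper's proof: the block (1)$\Leftrightarrow$(1')$\Leftrightarrow$(2)$\Leftrightarrow$(2') via the identity $\Ho^i\H om_A(M,-)\cong\Hom_{\mathbf H(A)}(M,\Sigma^i-)$ and the cone argument, the cycle (4)$\Rightarrow$(5)$\Rightarrow$(3)$\Rightarrow$(2') via the graded-split filtration, the homotopy colimit triangle, and the localising subcategory $\mathcal P$, and finally (2')$\Rightarrow$(4) by lifting against an externally supplied semi-free resolution to produce a homotopy inverse. Your explicit remarks on closure of $\langle A\rangle_{\rm loc}$ under direct summands and on the two-sided inverse computation merely spell out details the paper leaves implicit.
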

\begin{proof}
The equivalence of (1) and (1') as well as (2) and (2') follows from the natural isomorphism $\Ho^i\H om_A(M,-)\cong\Hom_{\mathbf H(A)}(M,\Sigma^i-)$. The equivalence of (1') and (2') holds since in a triangle $X\overset f\to Y\to Z\to\Sigma X$ the morphism $f$ is a quasi-isomorphism if and only if $Z$ is acyclic, and $\Hom_{\mathbf H(A)}(M,-)$ is a homological functor.\\
$(4)\Rightarrow(5)$: This is clear.\\
$(5)\Rightarrow(3)$: If $M$ is a module with filtration $0\subseteq M(0)\subseteq M(1)\subseteq\cdots\subseteq M$ that has the properties from (5), then $M$ is the homotopy colimit in $\mathbf H(A)$ of the objects $M(i)$ that appear in the semi-free filtration. More precisely, there is a triangle
$$\coprod_i M(i)\to\coprod_j M(j)\to M\to\Sigma\coprod_i M(i)$$
built from the maps ${1\brack{-\text{incl}}}\colon M(i)\to M(i)\amalg M(i+1)$.
The $M(i)$ are in $\langle A\rangle_{\rm loc}\subseteq\mathbf H(A)$ since the exact sequences
$$0\to M(i)\to M(i+1)\to M(i+1)/M(i)\to 0$$
split as sequences of graded modules. Hence also the homotopy colimit $M$ and all modules that are homotopy equivalent to $M$ belong to $\langle A\rangle_{\rm loc}\subseteq\mathbf H(A)$.\\
$(3)\Rightarrow\text{(2')}$: Let $\M$ be the class of objects $M\in\mathbf H(A)$ that have the property that $\Hom_{\mathbf H(A)}(M,\mathbf H_{\rm ac}(A))=0$. It is a triangulated subcategory that contains $A$ and is closed under coproducts. Hence $\M$ contains $\langle A\rangle_{\rm loc}$.\\
$\text{(1')}\Rightarrow \text{(4)}$: Let $M$ be an object that has the lifting property from (1'). Consider a semi-free resolution $a\colon L\to M$. The lifting property for $M$ gives a morphism $b\colon M\to L$ with $ab=\id_M$ in $\mathbf H(A)$.
$$\xymatrix@=5mm{
& M \ar@{=}[d]\ar@{-->}[ld]_b \\
L \ar[r]_a & M
}$$
The already shown implication $(4)\Rightarrow\text{(1')}$ gives that also $L$ has the lifting property. Since $b$ is a quasi-isomorphism we also get $ba=\id_M$ in $\mathbf H(A)$.
$$\xymatrix@=5mm{
& L \ar@{=}[d]\ar@{-->}[ld]_a \\
M \ar[r]_b & L
}$$
Hence $M$ is homotopy equivalent to its semi-free resolution $L$.\\
\end{proof}

Note that a dual version of this lemma holds for homotopically injective differential graded modules.
% , for instance:
% \begin{enumerate}
% \item[($\bar 1$)] $M$ is homotopically injective.
% \item[($\bar 1'$)]$\Hom_{\mathbf H(A)}(-,M)$ sends quasi-isomorphisms to isomorphisms, i.o.w. $M$ has in $\mathbf H(A)$ the following unique lifting property
% $$\xymatrix@=5mm{
% X \ar[r]^{qi}\ar[d]& Y \ar@{-->}[ld]^{\exists_1}\\
% M
% }$$
% \item[($\bar 2$)] $\H om_A(-,M)$ preserves acyclicity. (Spaltenstein \cite{S} calls this K-injective)
% \item[($\bar 2'$)] $\Hom_{\mathbf H(A)}(\mathbf H_{\rm ac}(A),M)=0$
% \item[($\bar 3$)] $\mathbf H_i(A)=\langle DA\rangle_{\prod}\subseteq\mathbf H(A)$
% \item[($\bar 4$)] 
% \item[($\bar 5$)]
% \item[($\bar 6$)] homotopically injective modules are characterised by having property (I)(see \cite{Ke})
% \end{enumerate}

Using the Brown representability theorem (cf.\ \cite{K},\cite{K2}, or \cite{N}) one sees that the inclusions $\mathbf H_p(A)\to\mathbf H(A)$ respectively $\mathbf H_i(A)\to\mathbf H(A)$ have a right adjoint $p\colon\mathbf H(A)\to\mathbf H_p(A)$ respectively a left adjoint $i\colon\mathbf H(A)\to\mathbf H_i(A)$.
The compositions
$$\mathbf H_p(A)\overset{\rm {incl}}\to\mathbf H(A)\overset{\rm {can}}\to\D(A)$$
$$\mathbf H_i(A)\overset{\rm {incl}}\to\mathbf H(A)\overset{\rm {can}}\to\D(A)$$
are equivalences of triangulated categories. The quasi-inverses that are induced by $i$ and $p$ exist by the universal property of the localisation functor and will also be denoted by $i$ and $p$.
$$\xymatrix@=5mm{
\mathbf H(A) \ar[r]\ar[d]_{p}& \D(A) \ar@{-->}[ld]^p && \mathbf H(A) \ar[r]\ar[d]_{i}& \D(A) \ar@{-->}[ld]^i\\
\mathbf H_p(A)  & && \mathbf H_i(A)
}$$
Given differential graded modules $X,Y$ there are natural isomorphisms 
$$\Hom_{\D(A)}(X,Y)\cong\Hom_{\D(A)}(pX,Y)\cong\Hom_{\mathbf H(A)}(pX,Y).$$

For a given exact functor $F\colon\mathbf H(A)\to\mathbf H(B)$ we define the {\em left derived functor} $LF$ respectively {\em right derived functor} $RF$ of $F$ to be the composition
$$LF\colon\D(A)\underset\simeq{\overset p\to}\mathbf H_p(A)\overset{incl}\to\mathbf H(A)\overset F\to\mathbf H(B)\overset{can}\to\D(B)$$
$$RF\colon\D(A)\underset\simeq{\overset i\to}\mathbf H_i(A)\overset{incl}\to\mathbf H(A)\overset F\to\mathbf H(B)\overset{can}\to\D(B)$$

\begin{rem}\label{rem:adjoint}
If $F\colon\mathbf H(A)\to\mathbf H(B)$ is right adjoint to $G\colon\mathbf H(B)\to\mathbf H(A)$, then the right derived functor $RF\colon\D(A)\to\D(B)$ is right adjoint to the left derived functor $LG\colon\D(B)\to\D(A)$.
%In particular, $\RHom(M,-)$ is right adjoint to $-\otimes_A^LM$ in $\D(A)$.
\end{rem}
% \begin{proof}
% There are natural isomorphisms $(LG-,-)\cong(Gi-,-)\cong(i-,F-)\cong(-,F-)\cong(G-,-)\cong(G-,p-)\cong(-,Fp-)\cong(-,RF-)$.
% \end{proof}

\begin{rem} 
\begin{enumerate}
\item The right derived $\Hom$-functor $\RHom_A(-,-):=\H om_A(-,i-)$ is a bifunctor
$$\RHom_A(-,-)\colon\D(A)^\op\times\D(A)\to\D(k)$$ and naturally isomorphic to the left derived one $\LHom_A(-,-):=\H om_A(p-,-)$.
\item It holds $\Ho^i\RHom_A(M,N)\cong\Hom_{\D(A)}(M,\Sigma^i N)$.
\item The left derived tensor-functor $-\otimes_A^L-:=(p-)\otimes_A-$ is a bifunctor
$$-\otimes^L_A-\colon\D(A)\times\D(A^\op)\to\D(k).$$
\item Let $A,B$ be differential graded algebras and $M$ be an $(A^\op,B)$-bimodule then we have exact functors $$\RHom_B(M,-)\colon\D(B)\to\D(A),$$
$$-\otimes_A^LM\colon\D(A)\to\D(B).$$
Moreover, $\RHom_B(M,-)$ is right adjoint to $-\otimes_A^LM$.
\end{enumerate}
\end{rem}
% \begin{proof}
% (1) Since $\RHom_A(-,N)=\H om_A(-,iN)$ preserves quasi-isomorphisms it can be extended to a functor on the derived category. That $\RHom_A(-,-)$ is a bifunctor follows then from the fact that $\H om_A(-,-)$ is a bifunctor. Further, there is a natural isomorphism of bifunctors $\D(A)^\op\times\D(A)\to\D(k)$
% $$\RHom_A(-,-)=\H om_A(-,i-)\cong\H om_A(p-,i-)\cong\H om_A(p-,-)=\LHom_A(-,-).$$\\
% (2) \\
% (3) Let $X$ be acyclic. $M\otimes_A^L-(X)=pM\otimes_AX$. We have   \\
% (4) By Remark~\ref{rem:adjoint} we have that $\RHom_B(M,-)$ is right adjoint to $-\otimes_A^LM$.
% \end{proof}

% \begin{lem}
% $X\otimes_A^L-$ preserves quasi-isomorphisms. In particular, $-\otimes_A^L-$ is a bifunctor.
% \end{lem}
% \begin{proof}
% Let $M$ be acyclic. The adjointness morphism $(X\otimes_A^LM,Y)\cong(X,\RHom_A(M,Y))$ and the fact that $\RHom_A(-,Y)$ preserves acyclicness gives $(X\otimes_A^LM,X\otimes_A^LM)\cong 0$ hence the assertion follows.
% \end{proof}
% 
% Let $A,B$ be differential graded algebras and $M$ be an $(A^\op,B)$-bimodule, $X$ an $A$-module, and $Y$ a $B$-module. Then $X\otimes_A^LM$ has a $B$-module structure, and we get a functor $-\otimes_A^LM\colon\D(A)\to\D(B)$. Furthermore, $\RHom_A(M,Y)$ has an $A$-module structure, and we get a functor $\RHom_B(M,-)\colon\D(B)\to\D(A)$. By Remark~\ref{rem:adjoint} we have that $\RHom_B(M,-)$ is right adjoint to $-\otimes_A^LM$.

\begin{lem}\label{lem:equ}
Let $f\colon A\to B$ be a quasi-isomorphism of differential graded $k$-algebras. Consider the adjoint pair of functors $$G:=-\otimes_A^L B\colon\D(A)\to\D(B)\text{ and } F:=\RHom_B(_AB,-)\colon\D(B)\to\D(A).$$ Then
\begin{enumerate}
 \item The functors $F$ and $G$ are mutually inverse equivalences of triangulated categories.
%\item For all objects $M,M'\in\D(A)$ there is an isomorphism of graded $k$-vectorspaces $$\Ho^*\RHom_A(M,M')\cong\Ho^*\RHom_B(GM,GM').$$% (respectively $N,N'\in\D(B)$) (respectively $\Ho^*\RHom_B(N,N')\cong\Ho^*\RHom_A(FN,FN')$). 
% (respectively $N\in\D(B)$)(respectively $\Ho^*N\cong\Ho^*FN$).
\item Let $G'$ be the functor $B\otimes_A^L -\colon\D(A^\op)\to\D(B^\op)$. For all $M\in\D(A),~X\in\D(A^\op)$ there is an isomorphism of graded $k$-vectorspaces $$\Ho^*(M\otimes_A^L X)\cong\Ho^*(GM\otimes_B^L G'X).$$
In particular, for all $M\in\D(A)$ there is an isomorphism of graded $k$-vectorspaces $$\Ho^*M\cong \Ho^*GM.$$
For the functors $F$ and $F':=\RHom_B(B_A,-)\colon\D(B^\op)\to\D(A^\op)$ the analogous statements hold.
% (respectively $N\in\D(B),~Y\in\D(B^\op)$ (respectively $\Ho^*(N\otimes_B^L Y)\cong\Ho^*(FN\otimes_A^L FY)$).
\end{enumerate}
\end{lem}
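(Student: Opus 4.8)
The plan is to treat (1) via the adjunction together with a localising-subcategory argument, and then to deduce (2) and its corollary from (1) together with associativity of the derived tensor product.

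For (1), I would first record that $F$ is right adjoint to $G$: this is the special case of the adjunction ``$\RHom_B(M,-)$ is right adjoint to $-\otimes_A^LM$'' from the preceding remark, with $M={}_AB$ the $(A^\op,B)$-bimodule supplied by $f$. Consequently $G$ is exact and preserves coproducts, while $F$ is, up to natural isomorphism, restriction of scalars $f_*$ along $f$; in particular $\Ho^*FN\cong\Ho^*N$ for every $N\in\D(B)$ since $f_*$ does not change the underlying complex. The key point is to show the unit $\eta\colon\Id_{\D(A)}\to FG$ is a natural isomorphism. I would let $\M\subseteq\D(A)$ be the full subcategory of objects $M$ for which $\eta_M$ is invertible; as $\eta$ is a natural transformation between exact coproduct-preserving functors, $\M$ is a localising subcategory. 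It contains $A$, because $GA\cong B$ and $FGA\cong f_*B$, and under these identifications $\eta_A$ is $f$ itself, a quasi-isomorphism and hence an isomorphism in $\D(A)$. Since $\langle A\rangle_{\rm loc}=\D(A)$ we obtain $\M=\D(A)$, so $G$ is fully faithful.

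To finish (1) I would observe that the essential image of a fully faithful exact coproduct-preserving functor is a localising subcategory of $\D(B)$: it is closed under cones, because any morphism between objects in the image is $G(u)$ for a unique $u$, and the cone of $G(u)$ is $G$ of the cone of $u$ by uniqueness of cones; and it is closed under coproducts since $G$ preserves them. This subcategory contains the compact generator $B=GA$, hence equals $\langle B\rangle_{\rm loc}=\D(B)$. Thus $G$ is essentially surjective as well, so $G$ is an equivalence with quasi-inverse its adjoint $F$. The displayed corollary $\Ho^*M\cong\Ho^*GM$ is then immediate: the unit gives $M\cong FGM$, and $\Ho^*FGM=\Ho^*f_*GM=\Ho^*GM$.

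For the main isomorphism in (2) I would compute $GM\otimes_B^LG'X=(M\otimes_A^LB)\otimes_B^L(B\otimes_A^LX)$ and use associativity of the derived tensor product together with $B\otimes_B^LB\simeq B$ to rewrite it as $M\otimes_A^LB\otimes_A^LX$. It then suffices to see that the map $M\otimes_A^LX=M\otimes_A^LA\otimes_A^LX\to M\otimes_A^LB\otimes_A^LX$ induced by $f$ is an isomorphism in $\D(k)$. This is the one genuinely technical step, and I expect it to be the main obstacle: replacing $X$ by a semi-free resolution over $A$ as in Lemma~\ref{lem:sfr}, extension of scalars along $f$ carries the free subquotients $\bigoplus\Sigma^{n_i}A$ to $\bigoplus\Sigma^{n_i}B$, on each of which $f$ is a quasi-isomorphism, so a filtration argument with the five lemma (using that cohomology commutes with the relevant filtered colimits) shows $A\otimes_A^LX\to B\otimes_A^LX$ is a quasi-isomorphism; tensoring on the left with $M$ preserves this. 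Passing to cohomology yields $\Ho^*(GM\otimes_B^LG'X)\cong\Ho^*(M\otimes_A^LX)$, and the analogous statements for $F$ and $F'$ follow symmetrically by working over $A^\op$ and $B^\op$.
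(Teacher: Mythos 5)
Your proof is correct and in substance takes the same route as the paper: for (1) the paper likewise reduces to the compact generator ($G$ is exact, preserves coproducts, sends $A$ to $B$, and induces isomorphisms $\Hom_{\D(A)}(A,\Sigma^nA)\to\Hom_{\D(B)}(B,\Sigma^nB)$ via $\Ho^nf$, whence fully faithful and dense by d\'evissage — your unit-isomorphism-plus-essential-image packaging is the same d\'evissage), and for (2) the paper performs exactly your associativity computation, justified by the one-line observation that $f$ exhibits $B$ as quasi-isomorphic to $A$ as an $(A^\op,A)$-bimodule, which is precisely what your filtration argument proves in detail. One correction: Lemma~\ref{lem:sfr} is not the right reference for the semi-free resolution of $X$, since it assumes $A$ simply connected (finite dimensional, positively graded, $A^0=k$, $A^1=0$) and $\Ho^*X$ bounded below and degreewise finite dimensional, whereas here $A$ and $X$ are arbitrary; you want the general existence of strict semi-free resolutions stated in Appendix~A.3, after which your filtration-and-colimit argument goes through unchanged. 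Also, for the "analogous statements" for $F$ and $F'$, a literally symmetric tensor computation with restriction of scalars is awkward; the clean deduction (and the paper's) is to apply what you have already proven to $M=FN$, $X=F'Y$ and use the counit isomorphisms $GFN\cong N$, $G'F'Y\cong Y$ furnished by (1).
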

\begin{proof}
(1) The functor $-\otimes_A^L B$ is exact and preserves coproducts. Furthermore, $-\otimes_A^L B$ maps the compact generator $A$ of $\D(A)$ to the compact generator $B$ of $\D(B)$. The following commutative diagram
$$\xymatrix{
\Hom_{\D(A)}(A,\Sigma^n A) \ar[r]\ar[d]^\cong & \Hom_{\D(B)}(A\otimes_A^L B,\Sigma^n A\otimes_A^L B) \ar[r]^{\quad\quad\cong} & \Hom_{\D(B)}(B,\Sigma^n B)\ar[d]^\cong \\
\Ho^n A \ar[rr]_{\Ho^n f}^\cong && \Ho^n B
}$$
gives isomorphisms $\Hom_{\D(A)}(A,\Sigma^n A) \to\Hom_{\D(B)}(A\otimes_A^L B,\Sigma^n A\otimes_A^L B)$ for all $n\in\mathbb Z$. Using d\'evissage arguments one gets that $-\otimes_A^L B\colon\D(A)\to\D(B)$ is fully faithful and dense.\\
%(2) This follows from (1) and $\Ho^i\RHom_A(M,M')\cong\Hom_{\D(A)}(M,\Sigma^iM')$.\\ %$\Ho^iM\cong\Ho^i\Hom_{\D(A)}(A,\Sigma^i M)\cong\Hom_{\D(B)}(GA,G\Sigma^i M)\cong\Hom_{\D(B)}(B,\Sigma^i GM)\cong\Ho^i\RHom\Ho^*GM$.\\
(2) Note that $B$ considered as an $(A^\op,A)$-bimodule is quasi-isomorphic to $A$. We calculate in $\D(k)$
\begin{multline*}
GM\otimes_B^L G'X\cong M\otimes_A^L B\otimes_B^L B\otimes_A^L X\cong M\otimes_A^L B\otimes_A^L X\cong M\otimes_A^L A\otimes_A^L X\cong M\otimes_A^L X.
\end{multline*}
Taking cohomology gives the desired result. The analogous results for the functors $F$ and $F'$ follow immediately from (1).
%For $M=A$ we clearly have $\Ho^*(M\otimes_A^L X)\cong\Ho^*(GM\otimes_B^L GX)$. Using d\'evissage arguments we also get the isomorphism for arbitrary $M\in\D(A)$.
\end{proof}

We call an object $X\in\D(A)$ {\em generator} for $\D(A)$ if $\langle X\rangle_{\rm loc}=\D(A)$. Further recall that an object $X$ is {\em compact} if $\Hom_{\D(A)}(X,-)$ preserves coproducts, i.e.\ the natural morphism $$\coprod_\a\Hom_{\D(A)}(X,X_\a)\to\Hom_{\D(A)}(X,\coprod_\a X_\a)$$
is an isomorphism for all families of objects $\{X_\a\}$, or equivalently every morphism from $X$ to $\coprod_\a X_\a$ factors through the coproduct of some finite subfamily $\{X_{\a_1},\dots,X_{\a_n}\}$ of $\{X_\a\}$. Neeman \cite{N2} has shown that in the derived category the compact objects are those finitely build from  $A$, i.e.\ $\D^c(A)=\langle A\rangle_{\rm thick}$ (see also \cite{Ke}). This immediately gives:

\begin{lem}
An object $X\in\D(A)$ is compact if and only if $\RHom_A(X,-)$ preserves coproducts.
\end{lem}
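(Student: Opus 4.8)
The plan is to reduce the claim to the definition of compactness by means of the cohomology identification $\Ho^i\RHom_A(X,N)\cong\Hom_{\D(A)}(X,\Sigma^iN)$ recorded earlier in this appendix. The point is that $\RHom_A(X,-)$ lands in $\D(k)$, where isomorphisms and coproducts are detected on cohomology, and each cohomology group of $\RHom_A(X,-)$ is a $\Hom$-group into a shift of the argument.

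First I would note that coproducts in $\D(k)$ are computed as the degreewise direct sum of the underlying complexes, so that $\Ho^i\colon\D(k)\to\Mod k$ commutes with coproducts and, moreover, a morphism in $\D(k)$ is an isomorphism precisely when it induces an isomorphism on $\Ho^i$ for every $i\in\mathbb Z$. Consequently, for a family $\{N_\alpha\}$ in $\D(A)$ the canonical comparison morphism $\coprod_\alpha\RHom_A(X,N_\alpha)\to\RHom_A(X,\coprod_\alpha N_\alpha)$ is an isomorphism in $\D(k)$ if and only if, for every $i$, the induced map
\[
\coprod_\alpha\Ho^i\RHom_A(X,N_\alpha)\longrightarrow\Ho^i\RHom_A\Bigl(X,\coprod_\alpha N_\alpha\Bigr)
\]
is an isomorphism.

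Second, using $\Ho^i\RHom_A(X,-)\cong\Hom_{\D(A)}(X,\Sigma^i-)$ naturally in the second variable, together with the fact that $\Sigma$ commutes with coproducts, I would identify the displayed map with the natural comparison morphism
\[
\coprod_\alpha\Hom_{\D(A)}(X,\Sigma^i N_\alpha)\longrightarrow\Hom_{\D(A)}\Bigl(X,\coprod_\alpha\Sigma^i N_\alpha\Bigr).
\]
With this in hand the equivalence is immediate. If $\RHom_A(X,-)$ preserves coproducts, then the case $i=0$ says exactly that $\Hom_{\D(A)}(X,-)$ preserves coproducts, i.e.\ that $X$ is compact. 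Conversely, if $X$ is compact, then applying the defining property of compactness to the family $\{\Sigma^i N_\alpha\}_\alpha$ shows the last comparison map is an isomorphism for every $i$ and every family $\{N_\alpha\}$, whence $\RHom_A(X,-)$ preserves coproducts by the reduction of the first step.

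The only point requiring care, and hence the main (mild) obstacle, is checking that the cohomology identification is natural enough to match the two comparison morphisms, that is, that it is compatible in the second variable with the coproduct structure on both sides; this is routine given the naturality of $\Ho^i\RHom_A(X,-)\cong\Hom_{\D(A)}(X,\Sigma^i-)$. One also tacitly uses that $\D(A)$ admits arbitrary coproducts, so that the statement is meaningful, which is standard.
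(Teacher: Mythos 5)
Your proof is correct, but it takes a genuinely different route from the paper's. In the paper this lemma is stated as an \emph{immediate} consequence of Neeman's theorem $\D^c(A)=\langle A\rangle_{\rm thick}$, quoted in the sentence directly preceding it: granting that theorem, the forward direction follows by d\'evissage, since the full subcategory of objects $X$ such that $\RHom_A(X,-)$ preserves coproducts is thick and contains $A$ (because $\RHom_A(A,-)$ is naturally isomorphic to the functor taking underlying complexes in $\D(k)$, and coproducts in both $\D(A)$ and $\D(k)$ are degreewise direct sums); the converse direction is then obtained, as in your argument, by applying $\Ho^0$. You avoid Neeman's theorem altogether: both implications in your proof are formal consequences of the natural identification $\Ho^i\RHom_A(X,-)\cong\Hom_{\D(A)}(X,\Sigma^i-)$ combined with the facts that cohomology in $\D(k)$ detects isomorphisms and commutes with coproducts. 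What your route buys is self-containedness: the equivalence is seen to have nothing to do with the existence of a compact generator or with the structure of $\D^c(A)$, only with the adjunction-level relation between $\RHom$ and the $\Hom$-groups. What the paper's route buys is brevity in context: Neeman's identification $\D^c(A)=\langle A\rangle_{\rm thick}$ is needed throughout the paper anyway, and the thick-subcategory d\'evissage is its standard device (the same pattern proves $\Hom$-finiteness of $\D^c(A)$ in Section~2), so the lemma really is a one-line corollary there. The one point you single out as delicate --- that the two comparison morphisms match under the cohomology identification --- is indeed routine naturality and not a gap.
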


\subsection{Resolutions and minimality}

Minimal semi-free resolutions are used by J\o rgensen in his work on simply connected differential graded algebras. The existence of those for all differential graded modules over simply connected differential graded algebras is shown in \cite{AFH2}. For an arbitrary differential graded algebra $A$ one has the following lemma (see \cite[Theorem~7.3.2]{AFH} or \cite[Proposition~6.6]{FHT})

\begin{lem}
Every differential graded $A$-module $M$ has a strict semi-free resolution, i.e.\ there exists a semi-free module $L$ and a surjective morphism of differential graded modules $L\to M$ that is a quasi isomorphism.
\end{lem}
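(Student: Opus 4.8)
The plan is to produce $L$ together with its augmentation $\epsilon\colon L\to M$ as the union of an increasing semi-free filtration, built by an inductive procedure that simultaneously forces $\epsilon$ to be degreewise surjective (strictness) and to be a quasi-isomorphism. The two facts I would use repeatedly are: a direct sum of shifts of $A$ is free as a graded $A$-module, so an $A$-linear map out of it is determined by the images of its generators, and upgrading such a map to a chain map amounts to matching the differential, which constrains where the differential of each generator is sent; and any graded $A^\natural$-module is a quotient of a graded-free one.

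For the surjectivity I would first build $L(0)$, and the slickest route is the reduced bar construction $B(A,A,M)$. Writing $A\cong k\oplus\bar A$ as graded $k$-spaces, set $B=A\otimes_k\big(\coprod_{n\ge 0}(\Sigma\bar A)^{\otimes n}\otimes_k M\big)$ with the usual bar differential. Then $B^\natural\cong A^\natural\otimes_k V$ is a direct sum of shifts of $A$ (choose a homogeneous $k$-basis of $V$), and the filtration by bar length has free subquotients, so $B$ is semi-free; the augmentation $\epsilon\colon B\to M$, $a\otimes m\mapsto am$, is surjective since $1\otimes m\mapsto m$, and the extra-degeneracy homotopy $s(a\otimes -)=1\otimes\bar a\otimes -$ exhibits $\epsilon$ as a quasi-isomorphism of complexes of $k$-modules. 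This already settles the lemma. If one prefers a hands-on construction, one instead starts from a graded surjection $F^\natural\twoheadrightarrow M^\natural$ with $F$ graded-free and upgrades it to a surjective chain map $L(0)\to M$ by adjoining free generators one degree lower to absorb the defect $d\epsilon-\epsilon d$.

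The hands-on route then continues by killing cohomology. Given a surjective chain map $\epsilon_n\colon L(n)\to M$ with $L(n)$ semi-free, consider $\Ker\big(\Ho^*\epsilon_n\big)$; for each element of a generating set pick a representing cocycle $c\in L(n)$, note that $\epsilon_n(c)=d(m)$ for some $m\in M$ since its class dies in $M$, and enlarge $L(n)$ by a free generator $g$ with $|g|=|c|-1$ and $dg=c$, extending the map by $g\mapsto m$ (this is consistent since $d(\epsilon(g))=\epsilon(c)$). The result $L(n+1)=L(n)\oplus(\text{free})$ has free quotient $L(n+1)/L(n)$, the targeted class is killed, and surjectivity on chains and on cohomology is preserved. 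Setting $L:=\bigcup_n L(n)$ and $\epsilon:=\varinjlim\epsilon_n$ yields a semi-free module whose filtration $\{L(n)\}$ has free subquotients, with $\epsilon$ surjective and $\Ho^*\epsilon$ an isomorphism, hence the desired strict semi-free resolution.

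The main obstacle is the base step rather than the killing step: a graded surjection $F^\natural\twoheadrightarrow M^\natural$ is almost never a chain map, so in the elementary approach one must genuinely construct the differential on $L(0)$, together with the correcting free generators, so that the augmentation is at once a chain map and degreewise onto; the bar construction sidesteps this by building both the strictness and the contracting homotopy into the formulas. One must also check in the inductive step that adjoining $g$ with $dg=c$ affects only the targeted cohomology class and creates no new classes, which is immediate since $g$ spans a new free summand whose sole new differential relation is $dg=c$.
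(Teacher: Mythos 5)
The paper itself offers no proof of this lemma; it is quoted from \cite[Theorem~7.3.2]{AFH} and \cite[Proposition~6.6]{FHT}, and the argument in those references is essentially your ``hands-on'' route. Your bar-construction route is therefore a genuinely different, and over a field perfectly viable, way to settle the statement, but it needs two repairs. First, for a general, not necessarily augmented, $A$ the \emph{reduced} bar complex is not defined: multiplication does not descend to $\bar A=A/k\cdot 1$ (if $a_1'=a_1+\lambda 1$ then $\overline{a_1'a_2}=\overline{a_1a_2}+\lambda\bar a_2$), so a mere vector-space splitting $A\cong k\oplus\bar A$ does not produce a differential with $d^2=0$. You should instead use the unreduced bar construction $B(A,A,M)=\coprod_{n\geq 0}A\otimes_k(\Sigma A)^{\otimes n}\otimes_k M$, for which the augmentation, the extra degeneracy $s(a[a_1|\cdots]m)=1[a|a_1|\cdots]m$, and all your assertions go through for any unital $A$. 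Second, the bar-length subquotients are \emph{not} free differential graded modules: they are of the form $A\otimes_k W$ with $W=(\Sigma A)^{\otimes n}\otimes_k M$ a complex of $k$-vector spaces whose internal differential survives in the quotient. Over a field this is easily fixed: the cocycles $Z(W)$ and the quotient $W/Z(W)$ both carry zero differential, so refining each bar-length step by $0\subseteq A\otimes_k Z(W)\subseteq A\otimes_k W$ produces a filtration whose subquotients really are direct sums of shifts of $A$; hence $B$ is semi-free in the sense used in this paper.

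The one genuine error is the closing claim of your hands-on route, that adjoining $g$ with $dg=c$ ``creates no new classes''. This is false. Take $A=k[x]/(x^2)$ with $\deg x=d\geq 3$ and zero differential, and $M=k$: starting from $\epsilon_0\colon L(0)=A\to k$, the kernel of $\Ho^*\epsilon_0$ is spanned by $[x]$, and adjoining $g$ with $dg=x$, $\epsilon_1(g)=0$, creates the cocycle $gx$ of degree $2d-1$, which is not a boundary (the degree $2d-2$ component of $L(1)$ is zero) and maps to $0$ under $\epsilon_1$ --- a brand-new kernel class. Indeed, were your claim true, the process would terminate after one step and $k_A$ would be compact, contradicting Corollary~\ref{cor:ampl}; the minimal semi-free resolution of $k_A$ over this $A$ is infinite. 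Fortunately the error does not sink your construction, because at each stage you kill a generating set of the \emph{entire} current kernel: since cohomology commutes with the filtered union $L=\bigcup_n L(n)$, any class in $\Ker\Ho^*\epsilon$ is represented by a cocycle lying in some $L(n)$, defines there a class in $\Ker\Ho^*\epsilon_n$, and hence becomes a boundary in $L(n+1)$ and in $L$; this gives injectivity of $\Ho^*\epsilon$. Surjectivity of $\Ho^*\epsilon_n$ must be secured at the base step (for $L(0)=A\otimes_k M$ it is the surjectivity of the action map $\Ho^*A\otimes_k\Ho^*M\to\Ho^*M$, by the K\"unneth isomorphism) and is then automatic for all $n$ since $\epsilon_{n+1}$ restricts to $\epsilon_n$. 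Replacing the false ``no new classes'' claim by this colimit argument completes your proof.
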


A differential graded module $L$ is called {\em minimal} if every morphism of differential graded modules $L\to L$ that is a homotopy equivalence is already an isomorphism. Recall also that a semi-free differential graded module is in particular homotopically projective (see Lemma~\ref{lem:homproj}). The uniqueness of minimal homotopically projective resolutions is given by

\begin{lem}
A minimal homotopically projective resolution is unique up to isomorphism, i.e.\ given two such resolutions $f\colon L\to M$ and $f'\colon L'\to M$ there exists an isomorphism of differential graded modules $a\colon L\to L'$ such that $f=f'\circ a$.
$$\xymatrix{
L \ar@{-->}[dd]_a\ar[rd]^f \\
& M\\
L' \ar[ru]_{f'} 
}$$
\end{lem}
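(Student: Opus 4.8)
The plan is to exploit the unique lifting property of homotopically projective modules recorded in Lemma~\ref{lem:homproj}(1') to produce mutually inverse homotopy equivalences between $L$ and $L'$, and then to invoke minimality only at the very end in order to upgrade these homotopy equivalences to genuine isomorphisms of differential graded modules.

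First I would construct the map $a$. Since $L$ is homotopically projective, the map $\Hom_{\mathbf H(A)}(L,f')\colon\Hom_{\mathbf H(A)}(L,L')\to\Hom_{\mathbf H(A)}(L,M)$ induced by the quasi-isomorphism $f'$ is a bijection, so there is a unique morphism $a\colon L\to L'$ in $\mathbf H(A)$ with $f'\circ a=f$. Symmetrically, using that $L'$ is homotopically projective and that $f$ is a quasi-isomorphism, there is a unique $b\colon L'\to L$ in $\mathbf H(A)$ with $f\circ b=f'$. Next I would check that $a$ and $b$ are mutually inverse in $\mathbf H(A)$: one has $f\circ(b\circ a)=f'\circ a=f=f\circ\id_L$, and since $\Hom_{\mathbf H(A)}(L,f)$ is again injective (because $L$ is homotopically projective and $f$ is a quasi-isomorphism) this forces $b\circ a=\id_L$ in $\mathbf H(A)$; interchanging the roles of $L$ and $L'$ gives $a\circ b=\id_{L'}$ in $\mathbf H(A)$. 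Thus $a$ is a homotopy equivalence.

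Finally I would invoke minimality. Choosing actual chain-map representatives of $a$ and $b$, the composite $b\circ a\colon L\to L$ is a chain map homotopic to $\id_L$, hence a homotopy equivalence, and so by minimality of $L$ it is an isomorphism of differential graded modules; likewise $a\circ b\colon L'\to L'$ is an isomorphism by minimality of $L'$. From the two invertible composites one deduces formally that $a$ itself is invertible: $(b\circ a)^{-1}\circ b$ is a left inverse and $b\circ(a\circ b)^{-1}$ a right inverse of $a$, and any left and right inverse of a morphism coincide, so $a$ is an isomorphism of differential graded modules with $f=f'\circ a$.

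The main obstacle is exactly this last passage: the lifting property only yields $a$ up to homotopy and only makes it a homotopy equivalence, so minimality is indispensable and—because it is formulated for self-maps—must be applied to the endomorphisms $b\circ a$ and $a\circ b$ rather than to $a$ directly. A secondary point to treat with care is that the relation $f=f'\circ a$ holds a priori in $\mathbf H(A)$, so if strict equality in $\mathbf C(A)$ is wanted one must additionally argue that the chain-map representative of $a$ can be chosen accordingly.
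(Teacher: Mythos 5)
Your proof is correct and follows essentially the same route as the paper: both obtain $a$ and $b$ from the unique lifting property of Lemma~\ref{lem:homproj}(1'), deduce from uniqueness that $b\circ a$ and $a\circ b$ are homotopic to the respective identities, and then invoke minimality to upgrade to isomorphisms in $\mathbf C(A)$. Your last step is in fact slightly more careful than the paper's (which asserts directly that $a$ and $b$ are isomorphisms), since you apply minimality to the self-maps $b\circ a$ and $a\circ b$ as the definition requires and then deduce invertibility of $a$ formally, and you also rightly flag that the relation $f=f'\circ a$ is obtained in $\mathbf H(A)$ rather than strictly in $\mathbf C(A)$ -- a point the paper's own proof shares.
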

\begin{proof}
Since $L$ and $L'$ have the lifting property (1') from Lemma~\ref{lem:homproj} we get unique morphisms $a\colon L\to L'$ and $b\colon L'\to L$ in $\mathbf H(A)$ such that the following diagram commutes in $\mathbf H(A)$.
$$\xymatrix{
L \ar@{-->}[d]_a\ar[rd]^f \\
L' \ar[r]^{f'}\ar@{-->}[d]_b & M\\
L \ar[ru]_f
}$$
The uniqueness of the lifting morphisms gives that $ba$ is homotopic to $\id_L$ and similarly $ab$ homotopic to $\id_{L'}$. Since $L$ and $L'$ are minimal it follows that $a$ and $b$ are isomorphisms in $\mathbf C(A)$.
\end{proof}
% \begin{lem}
% Let $f\colon L\to M$ be a projective resolution, i.e. a morphism of differential graded modules that is an isomorphism in cohomology and $L$ is semi-projective. Then the following are equivalent.
% \begin{enumerate}
% \item The differential graded module $L$ is minimal.
% \item The morphism $L\to M$ is right minimal in the categorical sense, i.e. every endomorphism $g\colon L\to L$ with $fg=f$ is an isomorphism.
% $$\xymatrix{
% L \ar[dd]_g^\cong\ar[rd]^f \\
% & M\\
% L \ar[ru]_f
% }$$
% \end{enumerate}
% In particular, a minimal projective resolution is unique.
% \end{lem}
% \begin{proof}
% Let $L'\to M$ be another minimal semi-projective resolution. Since $L$ and $L'$ are semi-projective and $L\to M$, $L'\to M$ are surjective there exist $\a$ and $\a'$ such that the following diagram commutes.
% $$\xymatrix{
% L \ar[d]_\a\ar[rd]^f \\
% L' \ar[r]^{f'}\ar[d]_{\a'} & M\\
% L \ar[ru]_f
% }$$
% The minimality of $L$ gives that $\a'\a$ is an isomorphism. Similarly $\a\a'$ is an isomorphism. Hence $\a$ and $\a'$ are isomorphisms.
% \end{proof}
We also note the following lemma on the indecomposability of minimal homotopically projective differential graded modules.
\begin{lem}
Let $L$ be a minimal homotopically projective differential graded $A$-module. If $L$ is indecomposable in $\D(A)$, then $L$ is also indecomposable in $\mathbf C(A)$.
\end{lem}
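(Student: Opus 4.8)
The plan is to convert a hypothetical decomposition of $L$ in $\mathbf C(A)$ into an idempotent endomorphism, push that idempotent into $\D(A)$ where $L$ is indecomposable, and then use minimality to pull the resulting information back to $\mathbf C(A)$.

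First I would record that, because $L$ is homotopically projective, the ring $\End_{\mathbf H(A)}(L)$—which is the quotient of $\End_{\mathbf C(A)}(L)$ by the ideal of null-homotopic endomorphisms—is isomorphic to $\End_{\D(A)}(L)$. Indeed, the canonical functor $\mathbf H_p(A)\to\D(A)$ is an equivalence (Lemma~\ref{lem:homproj}), so for a homotopically projective module the endomorphism rings computed in $\mathbf H(A)$ and in $\D(A)$ agree. Suppose now that $L\cong L_1\oplus L_2$ in $\mathbf C(A)$, and let $e\in\End_{\mathbf C(A)}(L)$ be the idempotent projecting onto $L_1$ along $L_2$, so that $e^2=e$, $\Im e=L_1$ and $\ker e=L_2$. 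Its image $\bar e\in\End_{\D(A)}(L)$ is again idempotent. Since $\D(A)$ is a triangulated category with coproducts, every idempotent splits (cf.\ the proof of Lemma~\ref{lem:dual}), so indecomposability of $L$ in $\D(A)$ forces $\bar e=0$ or $\bar e=1$.

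The decisive step then invokes minimality. If $\bar e=0$, then $e$ is null-homotopic, hence $1-e$ is homotopic to $\id_L$ and in particular a homotopy equivalence; by minimality $1-e$ is an isomorphism in $\mathbf C(A)$. An idempotent that is invertible equals the identity (multiply $f^2=f$ by $f^{-1}$), so $1-e=\id_L$, whence $e=0$ and $L_1=\Im e=0$. Symmetrically, if $\bar e=1$, then $e$ itself is a homotopy equivalence, hence an isomorphism by minimality, and the invertible idempotent $e$ equals $\id_L$, giving $L_2=\ker e=0$. In either case the decomposition is trivial, which proves that $L$ is indecomposable in $\mathbf C(A)$.

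I expect the only genuinely delicate point to be the role of minimality: it is precisely the hypothesis that forbids a nonzero contractible direct summand, which an object that is indecomposable merely in $\D(A)$ could otherwise acquire in $\mathbf C(A)$. The remaining ingredients are routine—the splitting of idempotents in $\D(A)$, the identification of endomorphism rings via homotopical projectivity, and the elementary observation that an invertible idempotent is the identity.
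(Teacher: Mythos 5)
Your proof is correct and follows essentially the same route as the paper's: pass an idempotent of $\End_{\mathbf C(A)}(L)$ through $\End_{\mathbf H(A)}(L)\cong\End_{\D(A)}(L)$, use splitting of idempotents in $\D(A)$ to see its image is trivial, and then use minimality to conclude that a map homotopic to the identity is an isomorphism, forcing the idempotent to be $0$ or $\id_L$. The only cosmetic difference is that the paper phrases the argument directly in terms of idempotents with a ``without loss of generality'' reduction, while you start from a decomposition $L\cong L_1\oplus L_2$ and treat the two cases $\bar e=0$ and $\bar e=1$ explicitly.
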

\begin{proof}
Consider
$$\End_{\mathbf C(A)}(L)\overset\pi\to\End_{\mathbf H(A)}(L)\cong\End_{\D(A)}(L)$$
Take an idempotent $e$ in $\End_{\mathbf C(A)}(L)$. The image $\pi(e)$ is an idempotent in $\End_{\mathbf H(A)}(L)$. Since $\End_{\mathbf H(A)}(L)\cong\End_{\D(A)}(L)$ and $L$ is indecomposable in $\D(A)$ we have that $\pi(e)$ is a trivial idempotent. Without loss of generality let $\pi(e)=\id_L$. In particular $e$ is a homotopy equivalence. Since $L$ is minimal it follows that $e$ is an isomorphism of differential graded modules. Hence $e=\id_L$ and $L$ is indecomposable in $\mathbf C(A)$.
\end{proof}

% We list some useful properties in this context:
% \begin{enumerate}
% %\item If $L$ is semi-free then $L$ is homotopically projective, in particular for a semi-free resolution $L\to M$ and a differential graded $A$-module $X$ it holds $\RHom_A(M,X)=\H om_A(L,X)$.
% %\item If $L$ is minimal then $\H om_A(L,k)$ and $L\otimes_Ak$ have vanishing differentials.
% \item $L$ is minimal semi-projective if and only if every quasi-isomorphism $X\to L$ has a right inverse.
% % \item If $L$ is homotopically projective then there is a natural isomorphism $\Hom_{\D(A)}(L,X)\cong\Hom_{\mathbf H(A)}(L,X)$. In particular:
% % \begin{enumerate}
% % \item Every morphism $L\to X$in $\D(A)$ is represented by a morphism in $\mathbf C(A)$.
% % \item Let $L$ in addition be minimal. If $L\in\D(A)$ is indecomposable then also $L\in\mathbf C(A)$ is indecomposable.
% % \end{enumerate}
% \end{enumerate}

\newpage

\section{Auslander-Reiten theory}

In this appendix we summarise for the reader that is not familiar with Auslander-Reiten theory some of the basic definitions and concepts of Auslander-Reiten theory.

\subsection{The Auslander-Reiten quiver}

Auslander-Reiten sequences or almost split sequences have been introduced by Auslander and Reiten in \cite{A1}. They have shown that the category of finitely presented modules $\mod R$ over an Artin-algebra $R$ has Auslander-Reiten sequences. The corresponding Auslander-Reiten quiver gives a `nice' visualisation of the module category, and a lot of information about the module category is contained in this combinatorial invariant. For example, in the case of a finite dimensional algebra of finite representation type over an algebraically closed field of characteristic different from $2$ it is possible to reconstruct the algebra from its Auslander-Reiten quiver. Auslander-Reiten theory has been of substantial interest in the representation theory of finite dimensional algebras for more than 20 years. Happel has defined an analogue for triangulated categories in \cite{H1} (see also in his book \cite{H}) and shown that the bounded derived category $\D^b(\mod A)$ of finite dimensional modules over a finite dimensional algebra $A$ has Auslander-Reiten triangles if and only if $A$ has finite global dimension (see \cite{H1} and \cite{H2}). Here we summarise some foundations in the triangulated situation.

\begin{defn}
Let $\T$ be a triangulated category. An {\em Auslander-Reiten triangle} in $\T$ is a triangle
$$X\overset f\to Y\overset g\to Z\overset h\to\Sigma X$$
with the following properties.
\begin{enumerate}
\item every map $X\to Y'$ that is not a split monomorphism factors through f,
\item every map $Y'\to Z$ that is not a split epimorphism factors through g,
\item $h$ is not the zero map.
\end{enumerate}
\end{defn}

Note that the end terms $X$ and $Z$ of an Auslander Reiten triangle
$$X\to Y\to Z\to\Sigma X$$
are indecomposable with local endomorphism rings and each end term determines the Auslander-Reiten triangle up to isomorphism. Therefore we can define for those $Z$ where an Auslander-Reiten triangle ends the {\em Auslander-Reiten translate} $\tau$ to be $\tau(Z)=X$.

We say that the category $\T$ has left (respectively right) Auslander-Reiten triangles if for every indecomposable object $X$ (respectively $Z$) there is an Auslander-Reiten triangle
$$X\overset f\to Y\overset g\to Z\overset h\to\Sigma X.$$
Further we say that the category $\T$ has Auslander-Reiten triangles if it has left and right Auslander-Reiten triangles.

The {\em Auslander-Reiten quiver} $\Gamma(\C)$ of an additive category $\C$ is the quiver consisting of all isomorphism classes $[X]$ of indecomposable objects $X$ as vertices and has an arrow starting in a vertex $[X]$ and ending in a vertex $[Y]$ if there exists an {\em irreducible} morphism $X\overset f\to Y$, i.e.\ $f$ is neither a split monomorphism nor a split epimorphism and in every factorisation
$$\xymatrix{
X\ar[rr]^f \ar[rd]_g && Y\\
&Z\ar[ru]_h
}$$
$g$ is a split monomorphism or $h$ a split epimorphism.

The connection to Auslander-Reiten triangles is given by the following Lemma.

\begin{lem}\cite[Proposition~4.3]{H}
Let $\T$ be a triangulated Krull-Remak-Schmidt category and $X\overset f\to Y\overset g\to Z\overset h\to\Sigma X$ an Auslander-Reiten triangle in $\T$. Then
\begin{enumerate}
\item The morphisms $f$ and $g$ are irreducible.
\item If $f_1\colon Y_1\to Z$ is an irreducible morphism, then there is a split monomorphism $\a\colon Y_1\to Y$ such that $f_1=g\circ\a$.
\item If $g_1\colon X\to Y_1$ is an irreducible morphism, then there is a split epimorphism $\b\colon Y\to Y_1$ such that $g_1=\b\circ f$.
\end{enumerate}
\end{lem}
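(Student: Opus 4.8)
The plan is to extract the easy half of part (1) directly from the three defining axioms of an Auslander--Reiten triangle, then establish the genuine factorisation property of irreducibility via a radical/invertibility argument in the Krull--Remak--Schmidt category, after which parts (2) and (3) fall out almost immediately. Throughout I abbreviate $(-,-):=\Hom_\T(-,-)$ and use that the end terms $X,Z$ are indecomposable with local endomorphism rings.

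First I would record the cheap facts. Since the third axiom gives $h\neq 0$, the triangle does not split, so $f$ is not a split monomorphism and $g$ is not a split epimorphism. The complementary requirements ($f$ not a split epimorphism, $g$ not a split monomorphism) follow from indecomposability of $X$ and $Z$: a split epimorphism out of the indecomposable $X$, or a split monomorphism into the indecomposable $Z$, would force the middle term $Y$ to vanish, the degenerate situation that is excluded for a proper Auslander--Reiten triangle (if $Y=0$ then $f$ would already be a split epimorphism and the conclusion of (1) would fail).

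The heart of the matter is the factorisation property, which I prove for $g$; the argument for $f$ is dual. Given a factorisation $g=v\circ u$ with $u\colon Y\to W$ and $v\colon W\to Z$, I must show $u$ is a split monomorphism or $v$ a split epimorphism. If $v$ is a split epimorphism we are done. Otherwise, the second axiom (every non-split-epimorphism $Y'\to Z$ factors through $g$) gives $s\colon W\to Y$ with $v=g\circ s$, so $g=g\circ(s\circ u)$, that is $g\circ(1_Y-su)=0$. Exactness of the triangle then yields $w\colon Y\to X$ with $1_Y-su=f\circ w$. Now I claim $su=1_Y-fw$ is invertible: $wf$ cannot be an automorphism of $X$, for otherwise $f$ would be a split monomorphism, so in the local ring $\End_\T(X)$ we have $wf\in\rad\End_\T(X)$ and $1_X-wf$ is invertible; the standard identity transferring invertibility of $1-wf$ to $1-fw$ then makes $su$ invertible. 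Hence $u$ is a split monomorphism with left inverse $(su)^{-1}s$, proving $g$ irreducible. The dual computation, using the first axiom together with $g$ not a split epimorphism, shows $f$ is irreducible.

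Parts (2) and (3) are then short. For (2): an irreducible $f_1\colon Y_1\to Z$ is in particular not a split epimorphism, so by the second axiom it factors as $f_1=g\circ\alpha$; irreducibility of $f_1$ forces $\alpha$ to be a split monomorphism or $g$ a split epimorphism, and since $g$ is not a split epimorphism, $\alpha$ is a split monomorphism. Part (3) is dual: an irreducible $g_1\colon X\to Y_1$ is not a split monomorphism, so by the first axiom $g_1=\beta\circ f$, and irreducibility together with $f$ not a split monomorphism forces $\beta$ to be a split epimorphism. The only non-formal ingredient, and thus the main obstacle, is the invertibility step in the previous paragraph: it genuinely uses both the local-endomorphism-ring structure (to place $wf$ in the radical) and the $1-fw$ versus $1-wf$ identity, whereas everything else is a direct unwinding of the triangle axioms. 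The one subtlety to handle with care is the exclusion of the degenerate case $Y=0$ when ruling out the remaining split conditions in part (1).
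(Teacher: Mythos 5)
The paper itself gives no proof of this lemma; it is quoted from Happel's book \cite{H}, and your argument is in substance exactly the standard proof given there. The core is correct: given $g=vu$ with $v$ not a split epimorphism, axiom (2) yields $v=gs$, exactness of $\Hom(Y,X)\to\Hom(Y,Y)\to\Hom(Y,Z)$ yields $1_Y-su=fw$, invertibility of $wf$ would make $f$ a split monomorphism, so $wf\in\rad\End_\T(X)$ by locality, and the identity transferring invertibility of $1_X-wf$ to $1_Y-fw$ makes $su$ invertible and $u$ a split monomorphism; the dual computation with $\End_\T(Z)$ handles $f$, and parts (2) and (3) follow formally, as you describe.

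Two caveats, both in your ``cheap facts''. First, a split epimorphism $f$ out of the indecomposable $X$ does not directly force $Y=0$: the section gives an idempotent $sf$ in the local ring $\End_\T(X)$, hence $sf=0$ (giving $Y=0$) \emph{or} $sf=1_X$ (giving $f$ an isomorphism), and the isomorphism case must be excluded separately --- it forces $Z=0$, contradicting $h\neq 0$; similarly for a split monomorphism $g$ into $Z$. This is a one-line repair. Second, and more seriously, your claim that the degenerate case $Y=0$ ``is excluded for a proper Auslander--Reiten triangle'' is not a consequence of the definition used in the paper: the triangle $k\to 0\to\Sigma k\overset{\sim}{\to}\Sigma k$ in $\D^c(k)$ satisfies all three axioms, and this is precisely the paper's case $\dim\Ho^*A=1$, where the Auslander--Reiten quiver consists of isolated vertices. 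For such a triangle $g$ is a split monomorphism, so part (1) of the lemma fails literally. Thus part (1) carries the implicit hypothesis $Y\neq 0$ (automatic for almost split sequences in module categories, but not in the triangulated setting); you were right to isolate this point, but it should be stated as a hypothesis or convention rather than asserted as something the axioms rule out. With that reading, your proof is complete.
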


If $\T$ is a triangulated Krull-Remak-Schmidt category that has Auslander-Reiten triangles, then by the lemma all the information for the Auslander-Reiten quiver is contained in the Auslander-Reiten triangles. In this case the Auslander-Reiten quiver also has some extra structure: The Auslander-Reiten translate $\tau$ is defined on all isomorphism classes and becomes a quiver automorphism satisfying the property that the set of predecessors of every vertex $[Z]$ equals the set of all successors of $\tau([Z])$, i.e.\ $\Gamma(\T)$ has the structure of a {\em stable translation quiver}. Furthermore, the multiplicity $a_{MN}$ of $M$ as direct summand in the middle term of an Auslander-Reiten triangle ending in $N$ and the multiplicity $a'_{MN}$ of $N$ as direct summand in the middle term of an Auslander-Reiten triangle starting in $M$ give a {\em valuation} $(a_{MN},a'_{MN})$ of an arrow $[M]\to[N]$. This valuation satisfies $(a_{\tau NM},a'_{\tau NM})=(a'_{MN},a_{MN})$ and the Auslander-Reiten quiver has the structure of a {\em valued stable translation quiver}.

A {\em (sub)additive function} on a valued stable translation quiver is a function $f$ that assigns to each vertex a natural number such that 
$$f(Z)+f(\tau Z)\underset{(\geq)}=\sum_{Y\in Z^-} a_{YZ}f(Y),$$
where $Z^-$ denotes the set of successors of $Z$. Combinatorial methods give results on the structure of valued stable translation quivers depending on the existence of (un)bounded (sub)additive functions (see \cite{HPR}).

Given an oriented tree $\Delta$ with valuation $\n$ let $\mathbb Z\Delta$ be the valued stable translation quiver defined as follows: The set of vertices of $\mathbb Z\Delta$ is the Cartesian product of $\mathbb Z$ with the set of vertices of $\Delta$. For every arrow $X\to Y$ in $\Delta$ let 
$$(n,X)\to(n,Y)\text{ and }(n,Y)\to(n-1,X),~n\in\mathbb Z$$ be arrows in $\mathbb Z\Delta$. Define the translation in $\mathbb Z\Delta$ by $\tau(n,X)=(n+1,X)$ and the valuation by $\n((n,X)\to(n,Y))=\n(X\to Y)$ and $\n((n,X)\to(n-1,Y))=\sigma\n(X\to Y)$ where $\sigma$ permutes the first and second coordinate. Note that up to isomorphism $\mathbb Z\Delta$ does not depend on the orientation of $\Delta$.

\begin{exm}
Consider the oriented tree
$$\overrightarrow{A_3}:\xymatrix{
1 \ar[r] & 2 \ar[r] & 3 
}$$
with trivial valuation $(1,1)$ for all arrows.
Then $\mathbb Z(\overrightarrow{A_3})$ is the following stable translation quiver
$$\xymatrix@=5mm{
\cdots &(2,3) \ar[rd] && (1,3)\ar[rd]\ar@{-->}_\tau[ll] && (0,3)\ar[rd]\ar@{-->}_\tau[ll] && (-1,3)\ar@{-->}_\tau[ll] & \cdots \\
\cdots && (1,2) \ar[ru]\ar[rd]\ar@{--}[l] && (0,2) \ar[ru]\ar[rd]\ar@{-->}_\tau[ll] && (-1,2) \ar[ru]\ar[rd]\ar@{-->}_\tau[ll] & \ar@{-->}[l] & \cdots \\
\cdots &(1,1) \ar[ru] && (0,1) \ar[ru]\ar@{-->}_\tau[ll] && (-1,1) \ar[ru]\ar@{-->}_\tau[ll] && (-2,1) \ar@{-->}_\tau[ll] & \cdots
}$$
with also trivial valuation $(1,1)$ for all arrows. This quiver coincides with the Auslander-Reiten quiver of the the bounded derived category $\D^b(\mod k\overrightarrow{A_3})$ of finitely presented modules over the path algebra $k\overrightarrow{A_3}$.
\end{exm}

%locally finite\\

\subsection{The functorial approach}

In \cite{A2} Auslander gives a functorial approach to Auslander-Reiten theory. Here we state some basic results in the triangulated setup.

Let $\T$ be a triangulated category
%$with Auslander-Reiten triangles, e.g. $\D^c(A)$ for $A$ a simply connected Gorenstein differential graded algebra. 
and let $\widehat\T:=\fp(\T^\op,\Ab)$ be the functor category of finitely presented additive functors $\T^\op\to\Ab$, where a functor $F$ is {\em finitely presented} (or {\em coherent}) if there exists an exact sequence $(-,Y)\to(-,X)\to F\to 0$. By Yoneda's lemma the natural transformations between two coherent functors form a set and $\widehat\T$ is an abelian Frobenius category called the {\em abelianisation} of $\T$. The Yoneda embedding 
$$\T\to\widehat\T,~X\mapsto \Hom_\T(-,X)$$
 is a cohomological functor. The projective (equivalently injective) objects in $\widehat\T$ are exactly the functors in the image of the Yoneda functor, i.e.\ the representable functors. Each indecomposable object $X$ in $\T$ with local endomorphism ring gives rise to a simple functor $$S_X:=\Hom_\T(-,X)/{\rad\Hom_\T(-,X)},$$ where the {\em radical} of a functor is defined as the intersection of all maximal subfunctors.

\begin{prop}
Let $\T$ be triangulated category and $A\to B\to C\to$ be a triangle in $\T$. Then the following are equivalent.
\begin{enumerate}
\item The triangle $$A\to B\to C\to\Sigma A$$ is an Auslander-Reiten triangle.
\item The object $C$ in $\T$ is indecomposable with local endomorphism ring and the sequence $$(-,B)\to(-,C)\to S_C\to 0$$ is a minimal projective presentation of $S_C$.
\end{enumerate}
\end{prop}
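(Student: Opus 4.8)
The plan is to pass everything through the Yoneda functor $X\mapsto(-,X)$, which is cohomological and identifies $\T$ with the projective objects of $\widehat\T$. First I would record the facts I need: applying this functor to the triangle $A\xrightarrow{f}B\xrightarrow{g}C\xrightarrow{h}\Sigma A$ yields an exact sequence
$$(-,A)\xrightarrow{f_*}(-,B)\xrightarrow{g_*}(-,C)\xrightarrow{h_*}(-,\Sigma A),$$
and $\End_{\widehat\T}((-,C))\cong\End_\T(C)$ by Yoneda, so when $C$ is indecomposable with local endomorphism ring the projective $(-,C)$ is indecomposable with simple top $S_C$ and $\rad(-,C)=\Ker((-,C)\to S_C)$. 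Throughout I use that for $C$ indecomposable a morphism $T\to C$ lies in $\rad(T,C)$ precisely when it is not a split epimorphism, the dual statement for split monomorphisms, and the fact that $\rad$ is a two-sided ideal of $\T$.

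For $(1)\Rightarrow(2)$: that $C$ is indecomposable with local endomorphism ring is part of the general theory of Auslander-Reiten triangles recalled in the appendix. Condition (2) in the definition of an Auslander-Reiten triangle says exactly that every non-split-epimorphism into $C$ factors through $g$; since $g$ itself is not a split epimorphism (as $h\neq0$), this is the statement $\Im g_*=\rad(-,C)$, whence $\Coker g_*\cong(-,C)/\rad(-,C)=S_C$ and the displayed sequence is exact. For minimality I would verify the two projective-cover conditions: $(-,C)\to S_C$ is a projective cover because $(-,C)$ is indecomposable projective; and since $A$ is indecomposable and $f$ is not a split monomorphism, $f\in\rad(A,B)$, so $\Im f_*\subseteq\rad(-,B)$, and as $\Ker g_*=\Im f_*$ by exactness, the induced map $(-,B)\to\Ker((-,C)\to S_C)$ is a projective cover.

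For $(2)\Rightarrow(1)$: here $C$ is indecomposable with local endomorphism ring by hypothesis, so $(-,C)$ is the projective cover of $S_C$ and the epimorphism $(-,C)\to S_C$ is forced to be the canonical one; exactness then gives $\Im g_*=\rad(-,C)$. Reading this backwards recovers condition (2) of an Auslander-Reiten triangle and forces $g$ not to be a split epimorphism, hence $h\neq0$, giving condition (3). The minimality hypothesis, namely $\Ker g_*\subseteq\rad(-,B)$, is equivalent to $g$ being right minimal. What remains is condition (1): that $A$ is indecomposable and $f$ is left almost split.

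This last point is where I expect the real work to lie. The plan is to exploit right minimality of $g$: a nontrivial decomposition of $A$ would, after completing $g$ to the triangle, split off a summand of $B$ on which $g$ restricts to an isomorphism, contradicting right minimality, and the same mechanism rules out a summand of $A$ annihilated by $h$. For the factorisation property I would translate it functorially through the exact sequence $(B,Y')\xrightarrow{f^*}(A,Y')\xrightarrow{(\Sigma^{-1}h)^*}(\Sigma^{-1}C,Y')$ obtained by applying $\Hom(-,Y')$ to the rotated triangle $\Sigma^{-1}C\to A\to B$: a map $u\colon A\to Y'$ factors through $f$ if and only if $u\circ\Sigma^{-1}h=0$. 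Using that $C$ is indecomposable and $g$ is minimal right almost split, I would argue that a $u$ not factoring through $f$ makes $\Sigma^{-1}C\xrightarrow{\Sigma^{-1}h}A\xrightarrow{u}Y'$ nonzero in a way that forces $u$ to be a split monomorphism; this is the converse of \cite[Proposition~4.3]{H}, the left-almost-split property being the exact dual of the right-almost-split property under passage to $\T^\op$. The main obstacle is organising this duality cleanly, so that right minimality of $g$ simultaneously yields the indecomposability of $A$ and the left-almost-split property of $f$.
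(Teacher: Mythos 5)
Your direction $(1)\Rightarrow(2)$ is correct, and it follows the same functorial route the paper intends (the paper's own proof is only a pointer to Auslander's notes \cite{A2}): identifying $\Im g_*=\rad(-,C)$ and $\Im f_*\subseteq\rad(-,B)$ is exactly the content of the cited results. In $(2)\Rightarrow(1)$ you also correctly obtain conditions (2) and (3) of the definition, and your reduction of condition (1) to the vanishing statement ``$u\colon A\to Y'$ factors through $f$ iff $\Sigma u\circ h=0$'' is right. The genuine gap is the very next step: the claim that $\Sigma u\circ h\neq 0$ \emph{forces} $u$ to be a split monomorphism is never proved, and neither of the two devices you invoke can prove it. Happel's \cite[Proposition~4.3]{H} presupposes that the triangle is already an Auslander--Reiten triangle, so neither it nor a ``converse'' of it is available here. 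And passing to $\T^{\op}$ buys nothing: dualising converts the hypothesis ``$g$ is minimal right almost split'' into ``$g^{\op}$ is minimal left almost split'' and converts the desired conclusion into ``$f^{\op}$ is right almost split'', i.e.\ the dual statement is exactly the mirror image of what is to be proved, not something already known. The assertion ``a minimal almost split map at one end of a triangle forces the almost split property at the other end'' is self-dual in form, so duality gives no reduction; it is precisely the theorem that Auslander's II.2.3/II.2.7 (cited in the paper's proof) supply. Note also that indecomposability of $A$ is not required by the paper's definition, so that part of your plan is dispensable; the entire burden is the factorisation property.

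The missing argument does have real content but can be given directly. Suppose $u\colon A\to Y'$ is not a split monomorphism and $w:=\Sigma u\circ h\neq 0$. Complete $w$ to a triangle $Y'\to E\overset{p}\to C\overset{w}\to\Sigma Y'$ and complete the commutative square $(1_C,\Sigma u)$ to a morphism of triangles $(u,v,1_C)$ from $A\to B\to C\to\Sigma A$ to it; the middle square gives $pv=g$. Since $w\neq 0$, $p$ is not a split epimorphism (as $wp=0$), so by the already established condition (2), $p=gq$ for some $q\colon E\to B$. Then $g(qv)=pv=g$, and right minimality of $g$ (which you correctly extract from minimality of the presentation) makes $qv$ an automorphism of $B$. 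Completing the commutative middle square $(q,1_C)$ to a morphism of triangles $(r,q,1_C)$ back to the original triangle and composing yields an endomorphism $(ru,qv,1_C)$ of the original triangle in which two components are isomorphisms; hence $ru$ is an isomorphism and $u$ is a split monomorphism, a contradiction. Without this argument (or the equivalent functorial input from \cite{A2}), condition (1) of the definition remains unverified and the equivalence is not established.
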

\begin{proof}
The assertion follows by imitating the proof of \cite[II~Pop~4.4]{A2} and using \cite[II.~Proposition~1.9, II.~ Corollary~1.11, II.~Proposition~2.3 and II.~Proposition~2.7]{A2}.
\end{proof}
\sloppy
\begin{prop}\label{prop:simplefunct}
Let $\T$ be a triangulated Krull-Remak-Schmidt category having Auslander-Reiten triangles. Then there is a one-to-one correspondence between the isomorphism classes of indecomposable objects in $\T$ and the isomorphism classes of simple objects in $\widehat\T$ given by the assignment
$$X\mapsto S_X:=\Hom_\T(-,X)/{\rad\Hom_\T(-,X)}.$$
In particular, there is a one-to-one correspondence between Auslander-Reiten triangles in $\T$ and minimal projective presentations of simple functors in $\widehat\T$
\end{prop}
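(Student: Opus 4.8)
The plan is to identify the assignment $X\mapsto S_X$ with the classical bijection, valid in any abelian category with enough projectives, between isomorphism classes of simple objects and isomorphism classes of indecomposable projective objects: a simple object corresponds to its projective cover, and an indecomposable projective $P$ to its top $P/\rad P$. First I would record what the Krull--Remak--Schmidt hypothesis supplies. Every indecomposable $X\in\T$ has local endomorphism ring, so by Yoneda $\End_{\widehat\T}(\Hom_\T(-,X))\cong\End_\T(X)$ is local; thus $(-,X)$ is an indecomposable projective object of $\widehat\T$, and as recalled above $S_X=(-,X)/\rad(-,X)$ is simple, so the rule $X\mapsto S_X$ is well defined on isomorphism classes. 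Conversely, since the projective objects of $\widehat\T$ are exactly the representable functors and the Yoneda embedding is fully faithful, the indecomposable projectives of $\widehat\T$ are precisely the $(-,X)$ with $X$ indecomposable in $\T$, and $X\mapsto(-,X)$ is a bijection on isomorphism classes.

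The technical heart is the control of $\rad(-,X)$. I would first observe that $\rad(-,X)$ is the \emph{unique} maximal subfunctor of $(-,X)$: by definition it is the intersection of all maximal subfunctors, and since its quotient $S_X$ is simple it is itself maximal, so any maximal subfunctor $M$ satisfies $\rad(-,X)\subseteq M\subsetneq(-,X)$, forcing $M=\rad(-,X)$. Next, via the Yoneda identification $\Hom_{\widehat\T}((-,X),S_X)\cong S_X(X)\cong\End_\T(X)/\rad\End_\T(X)$, which is a division ring, the canonical map $\pi_X\colon(-,X)\to S_X$ corresponds to the class of $\id_X$, and for $\gamma\in\End((-,X))$ the composite $\pi_X\circ\gamma$ corresponds to the class of $\gamma$; hence $\pi_X\circ\gamma=0$ whenever $\gamma$ is a non-unit.

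Injectivity and surjectivity then follow formally. For injectivity, suppose $S_X\cong S_{X'}$. Lifting $\pi_X$ and $\pi_{X'}$ against projectivity yields $\alpha\colon(-,X')\to(-,X)$ and $\beta\colon(-,X)\to(-,X')$ with $\pi_X\circ\alpha\circ\beta=\pi_X$; since $\pi_X\neq 0$, the observation above shows $\alpha\beta\in\End((-,X))$ is a unit, hence an isomorphism, and symmetrically for $\beta\alpha$, so $(-,X)\cong(-,X')$ and $X\cong X'$ by Yoneda. For surjectivity, let $S\in\widehat\T$ be simple; being coherent it has a presentation $(-,Y)\to(-,X)\xrightarrow{\varepsilon}S\to 0$. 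Writing $X\cong\bigoplus_i X_i$ with each $X_i$ indecomposable and restricting $\varepsilon$, some composite $p_i\colon(-,X_i)\to S$ is nonzero, hence an epimorphism since $S$ is simple; its kernel is a maximal subfunctor of $(-,X_i)$, so by uniqueness it equals $\rad(-,X_i)$ and $S\cong(-,X_i)/\rad(-,X_i)=S_{X_i}$.

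Finally, the \emph{in particular} statement follows by combining this bijection with the preceding Proposition: for an indecomposable $Z$ the Auslander--Reiten triangle $\tau Z\to Y\to Z\to\Sigma\tau Z$ produces exactly the minimal projective presentation $(-,Y)\to(-,Z)\to S_Z\to 0$, and every minimal projective presentation of a simple functor $S\cong S_Z$ arises this way; uniqueness of minimal projective presentations and of Auslander--Reiten triangles matches them up bijectively. I expect the only genuinely non-formal point to be the two facts of the second paragraph, namely that $\rad(-,X)$ is the unique maximal subfunctor and that non-units of $\End((-,X))$ are annihilated by $\pi_X$; both must be extracted from the local endomorphism ring together with the simplicity of $S_X$, and everything else is bookkeeping transported through the Yoneda equivalence.
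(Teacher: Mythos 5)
Your proposal is correct and follows essentially the same route as the paper: both rest on the Yoneda embedding, the projectivity of representable functors, the fact that $\rad\Hom_\T(-,X)$ is the unique maximal subfunctor of $\Hom_\T(-,X)$, finite presentation of simple functors for surjectivity, and the preceding proposition for the ``in particular'' statement. The only cosmetic difference is that the paper obtains injectivity by invoking the uniqueness (up to isomorphism) of the indecomposable object on which a simple functor is nonzero, whereas you derive it by a projective-cover style lifting argument through $\pi_X$ and the locality of $\End_{\widehat\T}((-,X))$ --- a minor variation within the same circle of ideas, and if anything slightly more self-contained than the paper's sketch.
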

\fussy
\begin{proof}
Given a simple functor $F\colon\T^\op\to\Ab$ there exists up to isomorphism a unique indecomposable object $X$ in $\T$ such that $F(X)\neq 0$. Choosing this $X$ gives a map from simple functors in $\widehat\T$ to indecomposable objects in $\T$. From Yoneda's Lemma we get a morphism of functors $\Hom_\T(-,X)\to F$. Since $F$ is simple this has to be an epimorphism. But the radical $\rad\Hom_\T(-,X)$ is the unique maximal subfunctor of $\Hom_\T(-,X)$. Hence we get $F\cong S_X$. Conversely, $S_X$ is finitely presented by the previous proposition and satisfies $S_X(X)\neq 0$. This gives the stated one-to-one correspondence. The second statement then follows immediately from the previous proposition.
\end{proof}

\newpage

\section{Some topological background}

In this appendix we summarise a few topological definitions and facts that appear in this thesis for the reader that is not familiar with algebraic topology. One finds further explanations in every classical book on algebraic topology, e.g.\ \cite{Spanier},\cite{Hat}. Furthermore, we mention some results from rational homotopy theory that are related to simply connected cochain differential graded algebras, the main objects of study in this thesis. As reference for this results on rational homotopy theory we give \cite{FHT}, \cite{AH} and \cite{Hess}.
 
\subsection{The singular cochain differential graded algebra}
The {\em singular chain complex} $C_*(X;k)$ of a topological space $X$ consists in degree $n$ of the free $k$-module with basis the set of singular $n$-simplices, i.e.\ continuous maps from the standard $n$-simplex $\Delta^n=\{\sum_{i=0}^n\la_ie_i\mid\la_i\geq 0, \sum\la_i=1\}\subseteq\mathbb R^{n+1}$ to $X$, and differential given by $\sum_{i=1}^n(-1)^i\delta_i$. Here $\delta_i$ is the {\em $i$-th face map} that assigns to an $n$-simplex $\sigma\colon\Delta^n\to X$ its composition $\sigma\circ\la_i\colon\Delta^{n-1}\to\Delta^n\to X$, where $\la_i\colon\Delta^{n-1}\to\Delta^n$ is the {\em $i$-th face inclusion} that is given by the $(n+1)\times n$-matrix
$$\left(\begin{smallmatrix}
E_i & 0  \\
0   & 0  \\
0   & E_{n-i}
\end{smallmatrix}\right).$$
% $$\begin{pmatrix}
% E_i & 0  \\
% 0   & 0  \\
% 0   & E_{n-i}
% \end{pmatrix}.$$
% One speaks about the {\em normalised} singular chain complex if one modes from the singular chain complex the submodule of degenerate simplices out. Remark that both differential graded $k$-modules are quasi-isomorphic to each other.

The {\em singular cochain differential graded algebra} $C^*(X;k)$ is as a complex of $k$-modules the $k$-linear dual of the singular chain complex. In addition, the {\em cup product} defines a multiplicative structure on $C^*(X;k)$ that makes it into a differential graded algebra. The cup product can be defined by the formula
% $$(f\cup g)(\sigma)=(f\otimes g)(\Delta\sigma)$$
% where $\Delta\colon C_*(X;k)\to C_*(X\times X;k)$ is induced by the diagonal map $X\to X\times X$
$$(f\cup g)(\sigma)=(-1)^{k(n-k)}f(\sigma\circ
\left(\begin{smallmatrix}
E_{k+1}  \\
0    \\
\end{smallmatrix}\right)
)
g(\sigma\circ
\left(\begin{smallmatrix}
0 \\
E_{n-k+1}  \\
\end{smallmatrix}\right)
)$$
where $f\in C^k(X), g\in C^{n-k}(X),$ and $\sigma\colon\Delta^n\to X$.
One calculates that the cup product satisfies the Leibniz rule
$$d(f\cup g)=d(f)\cup g+(-1)^kf\cup d(g).$$
As a consequence there is an induced cup product in cohomology. One can show that the cohomology algebra is graded commutative, i.e.\ $f\cup g=(-1)^{k(n-k)}g\cup f$.

Let $X$ and $Y$ be path connected spaces. Recall that a continuous map $f\colon X\to Y$ is a {\em weak homotopy equivalence} if the induced morphism between the homotopy groups $\pi_nf\colon\pi_nX\to\pi_nY$ is an isomorphism for all $n\in\mathbb N$. Two path connected spaces are {\em weakly homotopy equivalent} if they are equivalent in the induced equivalence relation. Further recall that a path connected topological space $X$ is {\em simply connected} if the fundamental group $\pi_1(X)$ is trivial.  In this case the Hurewicz theorem gives $\Ho_0(X;k)\cong k$ and $\Ho_1(X;k)=0$. Since $k$ is a field it follows that $\Ho^0(X;k)\cong k$ and $\Ho^1(X;k)=0$. Hence $C^*(X;k)$ is a simply connected cochain differential graded algebra.

\subsection{Rational homotopy theory}
Rational homotopy theory is the study of rational homotopy types of spaces, where a map between two simply connected spaces $\phi\colon X\to Y$ is a {\em rational homotopy equivalence} if $\pi_*(\phi)\otimes\mathbb Q$ is an isomorphism or equivalently $\Ho_*(\phi;\mathbb Q)$ (respectively $\Ho^*(\phi;\mathbb Q)$) is an isomorphism.
It is one result in rational homotopy theory that there is a bijection between the rational homotopy types of simply connected spaces of finite dimensional cohomology and weak equivalence classes of simply connected commutative cochain differential graded algebras of finite type over $\mathbb Q$. The bijection is given by a contravariant functor $A_{PL}$ that assigns to a simply connected space $X$ a commutative differential graded algebra $A_{PL}(X)$ that is weakly equivalent to the singular cochain differential graded algebra $C^*(X;\mathbb Q)$. Moreover, taking a minimal Sullivan model of a simply connected commutative cochain differential graded algebra of finite type gives a bijection between the weak equivalence classes of simply connected commutative cochain differential graded algebras of finite type and isomorphism classes of minimal Sullivan algebras over $\mathbb Q$. Minimal Sullivan algebras are in model theoretic terms cofibrant replacements in $\CDGAlg$, and they are useful for calculations. In particular, it is possible in characteristic zero to construct a commutative finite dimensional model for the singular cochain differential graded algebra $C^*(X;k)$ of a simply connected space $X$ (see \cite{FHT}).
%For the simply connected differential graded algebra one can take a {\em Sullivan minimal model} that turns out to be a cofibrant replacement in the language of model categories. The Sullivan minmal models  
%In characteristic zero it is possible to construct a graded commutative finite dimensional model for $C^*(X;k)$ (see \cite{FHT}). 

A topological space is called {\em formal} if the commutative cochain differential graded algebra $A_{PL}(X)$ is weakly equivalent in $\CDGAlg$ to its cohomology algebra $\Ho^*A_{PL}(X)$. In particular, for a formal space $X$ the singular cochain differential graded algebra $C^*(X;\mathbb Q)$  is weakly equivalent in $\DGAlg$ to its cohomology algebra $\Ho^*(X;\mathbb Q)$. %Examples of formal spaces are the spheres, complex projective space, products of formal spaces of finite rational cohomology type are again formal, compact K\"a hler manifolds, symmetric spaces  ...\\

%\subsection{Poincar\'e duality}
% We call a space $X$ a {\em Poincar\'e duality space} of Poincar\'e dimension $d$ if $\Ho^*X$ is isomorphic to $\Sigma^dD\Ho^*X$ as $\Ho^*X$-module. This definition coincides with the usual one of the existence of a fundamental class $\m$
% %(i.e. a single generator for $\Ho_dX$)
%  such that $\m\cap -\colon\Ho^kX\to\Ho_{d-k}X$ is an isomorphism for all $k$.
% Examples of Poincar\'e duality spaces are compact oriented manifolds, e.g. spheres, or real and complex projective space.

\newpage

%%\bibliographystyle{alpha}
%\bibliographystyle{acm}
%\bibliography{second}

\end{document}